\numberwithin{equation}{section}
\newtheorem{theorem}{Theorem}[section]
\newtheorem{corollary}[theorem]{Corollary}
\newtheorem{lemma}[theorem]{Lemma}
\newtheorem{prop}[theorem]{Proposition}
\theoremstyle{definition}
\newtheorem{remark}[theorem]{Remark}
\theoremstyle{definition}
\theoremstyle{definition}
\def\dashint{\operatorname%
{\,\,\text{\bf-}\kern-.98em\DOTSI\intop\ilimits@\!\!}}
\def\\det{\text{\det}}
\def\.5{\frac{1}{2}}
\newcommand{\RN}[1]{%
  \textup{\uppercase\expandafter{\romannumeral#1}}%
}
\renewcommand{\epsilon}{\varepsilon}
\newcounter{marnote}
\begin{document}

\title[Stress concentration factors]{Stress concentration factors for the Stokes flow with two nearly touching rigid particles}


\author[Z.W. Zhao]{Zhiwen Zhao}

\address[Z.W. Zhao]{Beijing Computational Science Research Center, Beijing 100193, China.}
\email{zwzhao365@163.com}


\date{\today} 


\maketitle
\begin{abstract}
In this paper, a mathematical model of two adjacent rigid particles immersed into a viscous incompressible fluid is considered. The main feature of the flow is that the Cauchy stress tensor consisting of the strain tensor and the pressure will appear blow-up as the distance between these two particles tends to zero. For the purpose of making clear this high concentration, a family of unified stress concentration factors are precisely captured in all dimensions, which determine whether the Cauchy stress tensor will blow up or not. As a direct application, we establish optimal gradient estimates and asymptotics of the Cauchy stress tensor for Stokes flow, which indicate that its maximal singularity comes from the pressure.

\end{abstract}


\section{Introduction}

The study on suspensions of rigid particles in a viscous incompressible fluid has aroused widespread attention and interest due to its extensive applications in biotechnology, medical science, chemical engineering, food processing, environmental geophysics and composites manufacturing, see e.g. \cite{G1994,S2011,PT2009}. In particular, the two-dimensional model of suspensions can be used to describe biological thin films, which are now intensively investigated in virtue of their application in pharmaceutical industry. Quantitative analysis on the Cauchy stress tensor of such suspensions becomes a key issue for both theory and practical applications.

In this paper, we consider a mathematical model of two close-to-touching rigid particles immersed into the incompressible Stokes flow with low Reynolds number, where the inertial forces can be neglected. The key feature of the flow is that the interparticle closeness will cause high concentration of the Cauchy stress tensor. In order to clearly describe the problem, we first formulate our domain. Consider a bounded domain $D$ with $C^{2,\gamma}$ boundary in $\mathbb{R}^{n}$, whose interior is filled with an incompressible viscous fluid of viscosity $\mu$ and contains two rigid particles $D_{1}^{\ast}$ and $D_{2}^{\ast}$ with $C^{2,\gamma}$ boundaries, where $n\geq2$, $0<\gamma<1$ and $\mu>0$. Assume that these two particles keep far away from the external boundary $\partial D$ and touch only at one point. By applying an appropriate coordinate system, we have
\begin{align*}
\partial D^{\ast}_{1}\cap\partial D^{\ast}_{2}=\{0\},\quad D_{i}^{\ast}\subset\{(x',x_{n})\in\mathbb{R}^{n}|\,(-1)^{i-1}x_{n}>0\},\quad i=1,2.
\end{align*}
Here and below, all ($n-1$)-dimensional variables and domains are denoted by adding superscript prime, for example, $x'$ and $B'$. Let $D_{1}^{\ast}$ and $D_{2}^{\ast}$ move up and down by a small positive constant $\varepsilon$ along $x_{n}$-axis, respectively. To be precise,
\begin{align*}
D_{i}^{\varepsilon}:=D_{i}^{\ast}+(0',(-1)^{i-1}\varepsilon),\quad i=1,2.
\end{align*}
For simplicity, we drop superscripts and set
\begin{align*}
D_{i}:=D_{i}^{\varepsilon},\;i=1,2,\quad\Omega:=D\setminus\overline{D_{1}\cup D_{2}},\quad\mathrm{and}\quad\Omega^{\ast}:=D\setminus\overline{D_{1}^{\ast}\cup D^{\ast}_{2}}.
\end{align*}

Denote by
\begin{align}\label{LAK01}
\boldsymbol{\Psi}:=\{\boldsymbol{\psi}\in C^1(\mathbb{R}^{n}; \mathbb{R}^{n})\ |\ \nabla\boldsymbol{\psi}+(\nabla\boldsymbol{\psi})^T=0\}
\end{align}
the linear space of rigid displacement, whose base is written as
\begin{align}\label{OPP}
\{\boldsymbol{\psi}_{\alpha}\}_{\alpha=1}^{\frac{n(n+1)}{2}}:=\left\{\,\mathbf{e}_{i},\,x_{k}\mathbf{e}_{j}-x_{j}\mathbf{e}_{k}\,\big|\,1\leq i\leq n,\,1\leq j<k\leq n\,\right\},
\end{align}
where $\{\mathbf{e}_{1},...,\mathbf{e}_{n}\}$ represents the standard basis of $\mathbb{R}^{n}$. For the convenience of later use, the order of these elements is prescribed as follows: $\boldsymbol{\psi}_{\alpha}=\mathbf{e}_{\alpha}$ in the case when $\alpha=1,2,...,n$; $\boldsymbol{\psi}_{\alpha}=x_{n}\mathbf{e}_{\alpha-n}-x_{\alpha-n}\mathbf{e}_{n}$ in the case when $\alpha=n+1,...,2n-1$; in the case of $\alpha=2n,...,\frac{n(n+1)}{2}\,(n\geq3)$, there exist two integers $1\leq i_{\alpha}<j_{\alpha}<n$ such that
$\boldsymbol{\psi}_{\alpha}=x_{j_{\alpha}}\mathbf{e}_{x_{i_{\alpha}}}-x_{i_{\alpha}}\mathbf{e}_{x_{j_{\alpha}}}$.

Let $\mathbf{u}=(u^{1},u^{2},...,u^{n})^{T}:D\rightarrow\mathbb{R}^{n}$ and $p:D\rightarrow\mathbb{R}$ represent the fluid velocity and pressure, respectively. The Stokes flow considered in this paper can be described as follows:
\begin{align}\label{La.002}
\begin{cases}
\nabla\cdot\sigma[\mathbf{u},p]=0,\;\nabla\cdot \mathbf{u}=0,&\hbox{in}\ \Omega,\\
\mathbf{u}|_{+}=\mathbf{u}|_{-},&\hbox{on}\ \partial{D}_{i},\,i=1,2,\\
e(\mathbf{u})=0,&\hbox{in}~D_{i},\,i=1,2,\\
\int_{\partial{D}_{i}}\boldsymbol{\psi}_{\alpha}\cdot\sigma[\mathbf{u},p]\nu=0,&i=1,2,\,\alpha=1,2,...,\frac{n(n+1)}{2},\\
\mathbf{u}=\boldsymbol{\varphi},&\hbox{on}\ \partial{D},
\end{cases}
\end{align}
where $\boldsymbol{\varphi}=(\varphi^{1},\varphi^{2},...,\varphi^{n})^{T}\in C^{2}(\partial D;\mathbb{R}^{n})$ is the given velocity field satisfying the compatibility condition \eqref{COM001} below, $e(\mathbf{u})=\frac{1}{2}(\nabla \mathbf{u}+(\nabla \mathbf{u})^{T})$ denotes the strain tensor, $\sigma[\mathbf{u},p]:=2\mu e(\mathbf{u})-p\mathbb{I}$ represents the Cauchy stress tensor with $\mathbb{I}$ being identity matrix, and $\nu$ is the unit outer normal to the domain. In view of the incompressible condition $\nabla\cdot\mathbf{u}=0$, it follows from a straightforward computation that $\nabla\cdot\sigma[\mathbf{u},p]=\mu\Delta\mathbf{u}-\nabla p$. Combining the incompressible condition $\nabla\cdot\mathbf{u}=0$ and Gauss theorem, we know that $\varphi$ must verify the following compatibility condition:
\begin{align}\label{COM001}
\int_{\partial D}\boldsymbol{\varphi}\cdot\nu=0.
\end{align}
The existence and uniqueness of a weak solution to \eqref{La.002} satisfying condition \eqref{COM001} can be demonstrated by using an integral variational formulation and Riesz representation theorem, see the detailed proof in \cite{L1959} with minor modification. With regard to the corresponding regularity of solution to Stokes flow, it can be deduced directly from the general regularity theory established in \cite{ADN1964} and \cite{S1966}, because the Stokes flow is elliptic in the sense of Douglis-Nirenberg, see \cite{T1984}.

The mathematical and physical problem of interest is to study the blow-up feature of the Cauchy stress tensor in the narrow region between inclusions. For this purpose, it needs to make clear the singular behavior of $e(\mathbf{u})$ and $p$ in terms of the interparticle distance $\varepsilon$, respectively. The first step is to carry out a linear decomposition for the solution $(\mathbf{u},p)$ of problem \eqref{La.002} by using the second and third lines of \eqref{La.002} as follows:
\begin{equation}\label{Decom}
\mathbf{u}=\sum^{2}_{i=1}\sum_{\alpha=1}^{\frac{n(n+1)}{2}}C_i^{\alpha}\mathbf{u}_{i}^{\alpha}+\mathbf{u}_{0},\quad\mathrm{and}\;p=\sum^{2}_{i=1}\sum_{\alpha=1}^{\frac{n(n+1)}{2}}C_i^{\alpha}p_{i}^{\alpha}+p_{0},\quad\mathrm{in}\;\Omega,
\end{equation}
where $C_{i}^{\alpha}$, $i=1,2,\,\alpha=1,2,...,\frac{n(n+1)}{2}$ are the free constants, to be determined by boundary integral condition in the fourth line of \eqref{La.002}, $\mathbf{u}_{0},\mathbf{u}_{i}^{\alpha}\in{C}^{2,\gamma}(\Omega;\mathbb{R}^n)$, $p_{0},p_{i}^{\alpha}\in C^{1,\gamma}(\Omega)$, $i=1,2$, $\alpha=1,2,...,\frac{n(n+1)}{2}$ are, respectively, the solutions of
\begin{equation}\label{qaz001}
\begin{cases}
\nabla\cdot\sigma[\mathbf{u}_{0},p_{0}]=0,\;\nabla\cdot\mathbf{u}_{0}=0&\mathrm{in}~\Omega,\\
\mathbf{u}_{0}=0,&\mathrm{on}~\partial{D}_{1}\cup\partial{D_{2}},\\
\mathbf{u}_{0}=\boldsymbol{\varphi},&\mathrm{on}~\partial{D},
\end{cases}
\end{equation}
and
\begin{equation}\label{qaz003az}
\begin{cases}
\nabla\cdot\sigma[\mathbf{u}^{\alpha}_{i},p_{i}^{\alpha}]=0,\;\nabla\cdot\mathbf{u}_{i}^{\alpha}=0,&\mathrm{in}~\Omega,\\
\mathbf{u}_{i}^{\alpha}=\boldsymbol{\psi}_{\alpha},&\mathrm{on}~\partial{D}_{i},~i=1,2,\\
\mathbf{u}_{i}^{\alpha}=0,&\mathrm{on}~\partial{D_{j}}\cup\partial{D},~j\neq i.
\end{cases}
\end{equation}
It is worth emphasizing that the objective of \eqref{Decom} is to extract these free constants $C_{i}^{\alpha}$, $i=1,2,\,\alpha=1,2,...,\frac{n(n+1)}{2}$ from the original problem \eqref{La.002} and make them be calculated by its fourth line. Define
\begin{align}\label{CTL001}
\mathbf{u}_{b}:=\sum_{\alpha=1}^{\frac{n(n+1)}{2}}C_{2}^\alpha(\mathbf{u}_{1}^\alpha+\mathbf{u}_{2}^\alpha)+\mathbf{u}_{0},\quad p_{b}:=\sum_{\alpha=1}^{\frac{n(n+1)}{2}}C_{2}^\alpha(p_{1}^\alpha+p_{2}^\alpha)+p_{0}.
\end{align}
In light of linearity, we see that $(\mathbf{u}_{b},p_{b})$ solves
\begin{align}\label{GLQ}
\begin{cases}
\nabla\cdot\sigma[\mathbf{u}_{b},p_{b}]=0,\;\nabla\cdot\mathbf{u}_{b}=0,&\mathrm{in}\;\Omega,\\
\mathbf{u}_{b}=\sum\limits^{\frac{n(n+1)}{2}}_{\alpha=1}C^{\alpha}_{2}\boldsymbol{\psi}_{\alpha},&\mathrm{on}\;\partial D_{1}\cup\partial D_{2},\\
\mathbf{u}_{b}=\boldsymbol{\varphi},&\mathrm{on}\;\partial D.
\end{cases}
\end{align}
From \eqref{Decom} and \eqref{CTL001}, we have
\begin{align}\label{Decom002}
\nabla\mathbf{u}=&\sum_{\alpha=1}^{\frac{n(n+1)}{2}}(C_{1}^\alpha-C_{2}^\alpha)\nabla\mathbf{u}_{1}^\alpha+\nabla \mathbf{u}_{b},\quad p=\sum_{\alpha=1}^{\frac{n(n+1)}{2}}(C_{1}^\alpha-C_{2}^\alpha)p_{1}^\alpha+p_{b},
\end{align}
which, in combination with the fourth line of \eqref{La.002}, shows that
\begin{align}\label{JGRO001}
\sum\limits_{\alpha=1}^{\frac{n(n+1)}{2}}(C_{1}^\alpha-C_{2}^{\alpha}) a_{11}^{\alpha\beta}=\mathcal{B}_{\beta}[\boldsymbol{\varphi}],\quad\beta=1,2,...,\frac{n(n+1)}{2},
\end{align}
where, for $\alpha,\beta=1,2,...,\frac{n(n+1)}{2}$,
\begin{align}\label{LGBC}
a_{11}^{\alpha\beta}:=-\int_{\partial{D}_{1}}\boldsymbol{\psi}_{\beta}\cdot\sigma[\mathbf{u}_{1}^{\alpha},p_{1}^{\alpha}]\nu,\quad \mathcal{B}_{\beta}[\boldsymbol{\varphi}]:=\int_{\partial D_{1}}\boldsymbol{\psi}_{\beta}\cdot\sigma[\mathbf{u}_{b},p_{b}]\nu.
\end{align}
Observe from \eqref{GLQ} that there is no potential difference on upper and lower boundaries of the narrow region between $D_{1}$ and $D_{2}$, which will lead to no blow-up of $\nabla\mathbf{u}_{b}$ and $p_{b}$. Then in view of \eqref{Decom002}, the problem is reduced to the establishments of the following asymptotics:
\begin{itemize}
{\it
\item[({\em a})] asymptotics of $\nabla \mathbf{u}_{1}^{\alpha},\,p_{1}^{\alpha}$, $\alpha=1,2,...,\frac{n(n+1)}{2}$;
\item[({\em b})] asymptotics of $C_{1}^{\alpha}-C_{2}^{\alpha}$, $\alpha=1,2,...,\frac{n(n+1)}{2}$.}
\end{itemize}
With regard to problem ({\em a}), we construct the explicit singular functions and then use the iterate technique developed in \cite{LX2022} to prove that these auxiliary functions are actually the leading terms corresponding to $\nabla \mathbf{u}_{1}^{\alpha}$ and $p_{1}^{\alpha}$, $\alpha=1,2,...,\frac{n(n+1)}{2}$. In order to solve problem ({\em b}) in all dimensions, it needs to accurately calculate each element of the coefficient matrix $(a_{11}^{\alpha\beta})_{\frac{d(d+1)}{2}\times\frac{d(d+1)}{2}}$ of \eqref{JGRO001}. For that purpose, a lot of technique, such as the maximum modulus principle, the rescale argument, the interpolation inequality and the standard interior and boundary estimates for the Stokes flow, will be used. Although we will present these results in a concise way, it actually contains quite complex computations, especially calculations for the off-diagonal elements in Lemma \ref{lemmabc} below. Until now, it remains to analyze the asymptotic behavior for $\mathcal{B}_{\beta}[\boldsymbol{\varphi}]$, $\beta=1,2,...,\frac{n(n+1)}{2}$. In light of the definitions of $\mathbf{u}_{b}$ and $p_{b}$, we see that the value of each $\mathcal{B}_{\beta}[\boldsymbol{\varphi}]$ will vary with the boundary data $\varphi$ and so are  $C_{1}^{\alpha}-C_{2}^{\alpha}$, $\alpha=1,2,...,\frac{n(n+1)}{2}$. Due to the free boundary value feature induced by the boundary data $\boldsymbol{\varphi}$, every $\mathcal{B}_{\beta}[\boldsymbol{\varphi}]$ is also called the $stress\; concentration\;factor\;or\;blow$-$up\;factor$.

As shown in Theorem 1.5 of \cite{LX2022}, the key to construction of the optimal lower bounds on $|\nabla\mathbf{u}|$ lies in finding the limit factor of $\mathcal{B}_{\beta}[\boldsymbol{\varphi}]$ as the distance $\varepsilon\rightarrow0$. For that purpose, Li and Xu \cite{LX2022} considered the symmetric domain and established the following convergence result in the presence of the boundary data of odd function (see Proposition 6.1 of \cite{LX2022}): for $\beta=1,2,...,\frac{n(n+1)}{2}$,
\begin{align}\label{KGA001}
\mathcal{B}_{\beta}[\boldsymbol{\varphi}]=\mathcal{B}_{\beta}^{\ast}[\varphi]+O(\rho_{n}(\varepsilon)),\quad\rho_{n}(\varepsilon)=
\begin{cases}
\sqrt{\varepsilon},&n=2,\\
|\ln\varepsilon|^{-1},&n=3,
\end{cases}
\end{align}
where $\mathcal{B}_{\beta}^{\ast}[\varphi]$ is defined by
\begin{align}\label{KGF001}
\quad\mathcal{B}_{\beta}^{\ast}[\varphi]:=\int_{\partial D_{1}^{\ast}}\boldsymbol{\psi}_{\beta}\cdot\sigma[\mathbf{u}_{b}^{\ast},p_{b}^{\ast}]\nu.
\end{align}
Here $(\mathbf{u}_{b}^{\ast},p_{b}^{\ast})$ satisfies
\begin{align}\label{LCRD001}
\begin{cases}
\nabla\cdot\sigma[\mathbf{u}_{b}^{\ast},p_{b}^{\ast}]=0,\;\nabla\cdot\mathbf{u}^{\ast}_{b}=0,&\mathrm{in}\;\Omega^{\ast},\\
\textbf{u}_{b}^{\ast}=\sum\limits^{\frac{n(n+1)}{2}}_{\alpha=1}C^{\alpha}_{\ast}\boldsymbol{\psi}_{\alpha},&\mathrm{on}\;(\partial D_{1}^{\ast}\cup\partial D_{2}^{\ast})\setminus\{0\},\\
\mathbf{u}_{b}^{\ast}=\boldsymbol{\varphi},&\mathrm{on}\;\partial D,
\end{cases}
\end{align}
where $C_{\ast}^{\alpha}$, $\alpha=1,2,...,\frac{n(n+1)}{2}$ are determined by
\begin{align}\label{LDCZ003}
\int_{\partial D_{1}^{\ast}\cup\partial D_{2}^{\ast}}\boldsymbol{\psi}_{\beta}\cdot\sigma[\mathbf{u}_{b}^{\ast},p_{b}^{\ast}]\nu=0,\quad\beta=1,2,...,\frac{n(n+1)}{2}.
\end{align}
It is worth pointing out that the shortcoming of the idea adopted in \cite{LX2022} is that it needs to impose some special symmetric condition on the domain and the parity condition on boundary data in order to capture these stress concentration factors. Moreover, according to their idea, it needs to use the infinitesimal differences of $C_{1}^{\alpha}-C_{2}^{\alpha}$ to establish the convergence of $C_{2}^{\alpha}\rightarrow C_{\ast}^{\alpha}$, as $\varepsilon\rightarrow0$, $\alpha=1,2,...,n$, which is the key to the establishment of \eqref{KGA001}. However, in the case of $n>3$, the differences of $C_{1}^{\alpha}-C_{2}^{\alpha}$,  $\alpha=1,2,...,n$ are of constant order but not the infinitely small quantity with respect to the distance $\varepsilon$, see \eqref{GMARZT001} below. So their idea cannot be used to deal with the case when $n>3$.

Inspired by previous work \cite{MZ2021} on the study of the stress concentration factors for the elasticity problem arising from composites, this paper also aims to get ride of those harsh symmetric conditions on the domain and the parity condition on boundary data and establish the convergence between $\mathcal{B}_{\beta}[\varphi]$ and $\mathcal{B}_{\beta}^{\ast}[\varphi]$ in the context of Stokes flow in any dimension. As a direct consequence, we present an asymptotic formula for the Cauchy stress tensor. This not only proves the optimal blow-up rate but also reveals that the greatest singularity of the Cauchy stress tensor arises from the pressure.

Before listing our major results, we further describe the domain. Suppose that there exists a small positive constant $R_{0}$ independent of $\varepsilon$ such that $\partial D_{1}$ and $\partial D_{2}$ near the origin are, respectively, formulated by two smooth functions $\pm(\varepsilon/2+h(x'))$ with
\begin{align}\label{CONVEX001}
h(x')=\kappa|x'|^{2},\quad|x'|\leq2R_{0},
\end{align}
where $\kappa>0$ is a $\varepsilon$-independent constant.  For $z'\in B'_{R_{0}}$ and $0<t\leq2R_{0}$, denote by
\begin{align*}
\Omega_{t}(z'):=&\left\{x\in \mathbb{R}^{n}~\big|~|x_{n}|<\varepsilon/2+h(x'),~|x'-z'|<t\right\}
\end{align*}
a narrow region between two particles. Remark that we don't require the symmetry of the whole domain $\Omega$, although condition \eqref{CONVEX001} implies that the thin gap $\Omega_{2R_{0}}$ is symmetric with respect to $x_{i}$, $i=1,2,...,n$.
To simplify the notation, we use the abbreviated notation $\Omega_{t}$ to denote $\Omega_{t}(0')$ with its upper and lower boundaries written as
$\Gamma^{\pm}_{t}:=\left\{x\in\mathbb{R}^{n}|\,x_{n}=\pm(\varepsilon/2+h(x')),\;|x'|<t\right\}$, respectively. Similarly, denote $\Omega_{t}^{\ast}:=\Omega_{t}|_{\varepsilon=0}$ in the touching case.

For $i,j=1,2,\,\alpha,\beta=1,2,...,\frac{n(n+1)}{2}$, define
\begin{align}\label{LMZR}
a_{ij}^{\ast\alpha\beta}=\int_{\Omega^{\ast}}(2\mu e(\mathbf{u}_{i}^{\ast\alpha}), e(\mathbf{u}_j^{\ast\beta})),\quad b_i^{\ast\alpha}=-\int_{\partial D}\mathbf{u}_{0}^{\ast}\cdot\sigma[\mathbf{u}_{i}^{\ast\alpha},p_{i}^{\ast\alpha}],
\end{align}
where $\mathbf{u}_{0}^{\ast},\mathbf{u}_{i}^{\ast\alpha}\in{C}^{2,\gamma}(\Omega^{\ast};\mathbb{R}^n)$, $p_{0}^{\ast},p_{i}^{\ast\alpha}\in C^{1,\gamma}(\Omega^{\ast})$, $i=1,2$, $\alpha=1,2,...,\frac{n(n+1)}{2}$, are the solutions of
\begin{equation}\label{ZG001}
\begin{cases}
\nabla\cdot\sigma[\mathbf{u}_{0}^{\ast},p_{0}^{\ast}]=0,\;\nabla\cdot\mathbf{u}_{0}^{\ast}=0&\mathrm{in}~\Omega^{\ast},\\
\mathbf{u}_{0}^{\ast}=0,&\mathrm{on}~(\partial{D}_{1}^{\ast}\cup\partial{D}_{2}^{\ast})\setminus\{0\},\\
\mathbf{u}_{0}^{\ast}=\boldsymbol{\varphi},&\mathrm{on}~\partial{D},
\end{cases}
\end{equation}
and
\begin{equation}\label{qaz001111}
\begin{cases}
\nabla\cdot\sigma[\mathbf{u}^{\ast\alpha}_{i},p_{i}^{\ast\alpha}]=0,\;\nabla\cdot\mathbf{u}_{i}^{\ast\alpha}=0,&\mathrm{in}~\Omega^{\ast},\\
\mathbf{u}_{i}^{\ast\alpha}=\boldsymbol{\psi}_{\alpha},&\mathrm{on}~\partial{D}_{i}^{\ast},~i=1,2,\\
\mathbf{u}_{i}^{\ast\alpha}=0,&\mathrm{on}~\partial{D}_{j}^{\ast}\cup\partial{D},~j\neq i,
\end{cases}
\end{equation}
respectively. Note that the definitions of $a_{ii}^{\ast\alpha\alpha}$, $i=1,2,\,\alpha=1,2,...,n$ are only valid if $n\geq3$, see Lemma \ref{lemmabc} for more details. Set
\begin{align}
&\mathbb{A}^{\ast}=(a_{11}^{\ast\alpha\beta})_{\frac{n(n+1)}{2}\times\frac{n(n+1)}{2}},\quad \mathbb{B}^{\ast}=\bigg(\sum\limits^{2}_{i=1}a_{i1}^{\ast\alpha\beta}\bigg)_{\frac{n(n+1)}{2}\times\frac{n(n+1)}{2}},\label{WZW}\\
&\mathbb{C}^{\ast}=\bigg(\sum\limits^{2}_{j=1}a_{1j}^{\ast\alpha\beta}\bigg)_{\frac{n(n+1)}{2}\times\frac{n(n+1)}{2}},\quad \mathbb{D}^{\ast}=\bigg(\sum\limits^{2}_{i,j=1}a_{ij}^{\ast\alpha\beta}\bigg)_{\frac{n(n+1)}{2}\times\frac{n(n+1)}{2}}.\notag
\end{align}
For $\alpha=1,2,...,\frac{n(n+1)}{2}$, $n\geq2$, substitute $\Big(\sum\limits_{i=1}^{2}b_{i}^{\ast1},...,\sum\limits_{i=1}^{2}b_{i}^{\ast\frac{n(n+1)}{2}}\Big)^{T}$ for the elements of $\alpha$-th column of $\mathbb{D}^{\ast}$ and then obtain new matrix $\mathbb{D}^{\ast\alpha}$ as follows:
\begin{gather*}
\mathbb{D}^{\ast\alpha}=
\begin{pmatrix}
\sum\limits^{2}_{i,j=1}a_{ij}^{\ast11}&\cdots&\sum\limits_{i=1}^{2}b_{i}^{\ast1}&\cdots&\sum\limits^{2}_{i,j=1}a_{ij}^{\ast1\,\frac{n(n+1)}{2}} \\\\ \vdots&\ddots&\vdots&\ddots&\vdots\\\\ \sum\limits^{2}_{i,j=1}a_{ij}^{\ast\frac{n(n+1)}{2}\,1}&\cdots&\sum\limits_{i=1}^{2}b_{i}^{\ast\frac{n(n+1)}{2}}&\cdots&\sum\limits^{2}_{i,j=1}a_{ij}^{\ast\frac{n(n+1)}{2}\,\frac{n(n+1)}{2}}
\end{pmatrix}.
\end{gather*}

In the case of $n=2,3$, denote
\begin{gather}\mathbb{A}_{0}^{\ast}=\begin{pmatrix} a_{11}^{\ast n+1\,n+1}&\cdots&a_{11}^{\ast n+1\frac{n(n+1)}{2}} \\\\ \vdots&\ddots&\vdots\\\\a_{11}^{\ast\frac{n(n+1)}{2}n+1}&\cdots&a_{11}^{\ast\frac{n(n+1)}{2}\frac{n(n+1)}{2}}\end{pmatrix},\label{LAGT001}\\
\mathbb{B}_{0}^{\ast}=\begin{pmatrix} \sum\limits^{2}_{i=1}a_{i1}^{\ast n+1\,1}&\cdots&\sum\limits^{2}_{i=1}a_{i1}^{\ast n+1\,\frac{n(n+1)}{2}} \\\\ \vdots&\ddots&\vdots\\\\ \sum\limits^{2}_{i=1}a_{i1}^{\ast\frac{n(n+1)}{2}1}&\cdots&\sum\limits^{2}_{i=1}a_{i1}^{\ast\frac{n(n+1)}{2}\frac{n(n+1)}{2}}\end{pmatrix},\notag\\
\mathbb{C}_{0}^{\ast}=\begin{pmatrix} \sum\limits^{2}_{j=1}a_{1j}^{\ast1\,n+1}&\cdots&\sum\limits^{2}_{j=1}a_{1j}^{\ast1\frac{n(n+1)}{2}} \\\\ \vdots&\ddots&\vdots\\\\ \sum\limits^{2}_{j=1}a_{1j}^{\ast\frac{n(n+1)}{2}\,n+1}&\cdots&\sum\limits^{2}_{j=1}a_{1j}^{\ast\frac{n(n+1)}{2}\frac{n(n+1)}{2}}\end{pmatrix}.\notag
\end{gather}
For $\alpha=1,2,...,\frac{n(n+1)}{2}$, after substituting column vector $\Big(b_{1}^{\ast n+1},...,b_{1}^{\ast\frac{n(n+1)}{2}}\Big)^{T}$ for the elements of $\alpha$-th column in the matrix $\mathbb{B}_{0}^{\ast}$, we obtain the new matrix denoted by $\mathbb{B}_{0}^{\ast\alpha}$ as follows:
\begin{gather*}
\mathbb{B}_{0}^{\ast\alpha}=
\begin{pmatrix}
\sum\limits^{2}_{i=1}a_{i1}^{\ast n+1\,1}&\cdots&b_{1}^{\ast n+1}&\cdots&\sum\limits^{2}_{i=1}a_{i1}^{\ast n+1\,\frac{n(n+1)}{2}} \\\\ \vdots&\ddots&\vdots&\ddots&\vdots\\\\ \sum\limits^{2}_{i=1}a_{i1}^{\ast\frac{n(n+1)}{2}\,1}&\cdots&b_{1}^{\ast\frac{n(n+1)}{2}}&\cdots&\sum\limits^{2}_{i=1}a_{i1}^{\ast\frac{n(n+1)}{2}\frac{n(n+1)}{2}}
\end{pmatrix}.
\end{gather*}
Write
\begin{align}\label{ZZ002}
\mathbb{F}_{0}^{\ast}=\begin{pmatrix} \mathbb{A}_{0}^{\ast}&\mathbb{B}_{0}^{\ast} \\  \mathbb{C}_{0}^{\ast}&\mathbb{D}^{\ast}
\end{pmatrix},\quad \mathbb{F}_{0}^{\ast\alpha}=\begin{pmatrix} \mathbb{A}_{0}^{\ast}&\mathbb{B}_{0}^{\ast\alpha} \\  \mathbb{C}_{0}^{\ast}&\mathbb{D}^{\ast\alpha}
\end{pmatrix},\quad\alpha=1,2,...,\frac{n(n+1)}{2}.
\end{align}

In the case of $n>3$, for $\alpha=1,2,...,\frac{n(n+1)}{2}$, let the elements of $\alpha$-th column of $\mathbb{B}^{\ast}$ be replaced by column vector $\Big(b_{1}^{\ast1},...,b_{1}^{\ast\frac{n(n+1)}{2}}\Big)^{T}$ and we then acquire the new matrix $\mathbb{B}_{1}^{\ast\alpha}$ as follows:
\begin{gather*}
\mathbb{B}_{1}^{\ast\alpha}=
\begin{pmatrix}
\sum\limits^{2}_{i=1}a_{i1}^{\ast11}&\cdots&b_{1}^{\ast1}&\cdots&\sum\limits^{2}_{i=1}a_{i1}^{\ast1\,\frac{n(n+1)}{2}} \\\\ \vdots&\ddots&\vdots&\ddots&\vdots\\\\ \sum\limits^{2}_{i=1}a_{i1}^{\ast\frac{n(n+1)}{2}\,1}&\cdots&b_{1}^{\ast\frac{n(n+1)}{2}}&\cdots&\sum\limits^{2}_{i=1}a_{i1}^{\ast\frac{n(n+1)}{2}\frac{n(n+1)}{2}}
\end{pmatrix}.
\end{gather*}
Set
\begin{align}\label{ZZ003}
\mathbb{F}_{1}^{\ast}=\begin{pmatrix} \mathbb{A}^{\ast}&\mathbb{B}^{\ast} \\  \mathbb{C}^{\ast}&\mathbb{D}^{\ast}
\end{pmatrix},\quad\mathbb{F}^{\ast\alpha}_{1}=\begin{pmatrix} \mathbb{A}^{\ast}&\mathbb{B}_{1}^{\ast\alpha} \\  \mathbb{C}^{\ast}&\mathbb{D}^{\ast\alpha}
\end{pmatrix},\quad\alpha=1,2,...,\frac{n(n+1)}{2}.
\end{align}

Previously in \cite{LX2022}, the free constants $C_{\ast}^{\alpha}$, $\alpha=1,2,...,\frac{n(n+1)}{2}$ are determined by condition \eqref{LDCZ003} but their values are inexplicit. In this paper, their values can be explicitly given by
\begin{align}\label{ZZWWWW}
C_{\ast}^{\alpha}=&
\begin{cases}
\frac{\det\mathbb{F}_{0}^{\ast\alpha}}{\det \mathbb{F}_{0}^{\ast}},&n=2,3,\vspace{0.5ex} \\
\frac{\det\mathbb{F}_{1}^{\ast\alpha}}{\det \mathbb{F}_{1}^{\ast}},&n>3,
\end{cases}
\end{align}
where $\mathbb{F}_{i}^{\ast\alpha}$ and $\mathbb{F}_{i}^{\ast\alpha}$, $i=0,1,\,\alpha=1,2,...,\frac{n(n+1)}{2}$ are defined by \eqref{ZZ002}--\eqref{ZZ003}. By capturing the exact values of $C_{\ast}^{\alpha}$, $\alpha=1,2,...,\frac{n(n+1)}{2}$, condition \eqref{LDCZ003} can be left out, which is an important improvement.

In the following, $O(1)$ denotes some quantity satisfying that $|O(1)|\leq\,C$ for some $\varepsilon$-independent positive constant $C$, whose value may differ at each occurrence and depend only on $\mu,n,R_{0},\kappa$, and the upper bounds of $\|\partial D\|_{C^{2,\gamma}}$ and $\|\partial D_{i}\|_{C^{2,\gamma}}$, $i=1,2$.

Define
\begin{align}\label{JTD}
r_{\varepsilon}:=&
\begin{cases}
\varepsilon^{\frac{1}{24}},&n=2,\\
|\ln\varepsilon|^{-1},&n=3,\\
\varepsilon^{\min\{\frac{1}{12},\frac{n-3}{24}\}},&n>3.
\end{cases}
\end{align}
The main objective of this paper is to establish the following convergence result.
\begin{theorem}\label{JGR}
Assume that $D_{1},D_{2}\subset D\subset\mathbb{R}^{n}\,(n\geq2)$ are described as above and condition \eqref{CONVEX001} holds. Then for a arbitrarily small $\varepsilon>0$,
\begin{align*}
\mathcal{B}_{\beta}[\boldsymbol{\varphi}]=\mathcal{B}_{\beta}^{\ast}[\boldsymbol{\varphi}]+O(r_{\varepsilon}),\quad\beta=1,2,...,\frac{n(n+1)}{2},
\end{align*}
where $\mathcal{B}_{\beta}[\boldsymbol{\varphi}]$ and $\mathcal{B}_{\beta}^{\ast}[\boldsymbol{\varphi}]$ are defined by \eqref{LGBC} and \eqref{KGF001}, respectively, the infinitely small quantity $r_{\varepsilon}$ is given by \eqref{JTD}.
\end{theorem}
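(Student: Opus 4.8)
The plan is to compare $\mathcal{B}_{\beta}[\boldsymbol{\varphi}]$ and $\mathcal{B}_{\beta}^{\ast}[\boldsymbol{\varphi}]$ by splitting each into a contribution from the narrow region $\Omega_{2R_0}$ and the complement, and then estimating the difference piece by piece. First I would rewrite the boundary integrals in \eqref{LGBC} and \eqref{KGF001} in variational form: testing the equation $\nabla\cdot\sigma[\mathbf{u}_b,p_b]=0$ against a suitable extension of $\boldsymbol{\psi}_\beta$ and integrating by parts, one obtains $\mathcal{B}_{\beta}[\boldsymbol{\varphi}]=\int_{\Omega}\big(2\mu e(\mathbf{u}_b),e(\mathbf{v}_{1}^{\beta})\big)$ for the auxiliary function $\mathbf{v}_1^\beta$ equal to $\boldsymbol{\psi}_\beta$ on $\partial D_1$ and $0$ on $\partial D_2\cup\partial D$; likewise $\mathcal{B}_{\beta}^{\ast}[\varphi]=\int_{\Omega^{\ast}}\big(2\mu e(\mathbf{u}_b^{\ast}),e(\mathbf{v}_{1}^{\ast\beta})\big)$. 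Here $\mathbf{u}_b$, $\mathbf{u}_b^{\ast}$ are given by \eqref{CTL001} and its starred analogue through the decomposition coefficients $C_2^\alpha$, $C_\ast^\alpha$, whose exact values are recorded in \eqref{ZZWWWW} and the asymptotics of the matrix entries from Lemma \ref{lemmabc}.

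Next I would establish the two ingredients that feed the comparison: (i) energy/stability estimates showing $\|\nabla(\mathbf{u}_0-\mathbf{u}_0^\ast)\|_{L^2(\Omega\cap\Omega^\ast)}$, $\|\nabla(\mathbf{u}_i^\alpha-\mathbf{u}_i^{\ast\alpha})\|_{L^2}$, and the analogous pressure differences are controlled by a power of $\varepsilon$ away from the contact point, using the $C^{2,\gamma}$ regularity of the particles, the maximum modulus principle, the rescaling argument near $\partial D_i$, and the interpolation inequality — exactly the toolkit advertised after problem $(b)$ in the introduction; and (ii) the convergence $C_2^\alpha\to C_\ast^\alpha$ (respectively boundedness of $C_1^\alpha-C_2^\alpha$) with an explicit rate, read off from \eqref{ZZWWWW} together with the asymptotic expansions of $a_{11}^{\ast\alpha\beta}$, $\sum_i a_{i1}^{\ast\alpha\beta}$, $b_i^{\ast\alpha}$ proved in the cited lemmas. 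The delicate point is that $\mathbf{u}_b$ carries the blow-up-prone combinations $C_2^\alpha(\mathbf{u}_1^\alpha+\mathbf{u}_2^\alpha)$, but—as remarked after \eqref{GLQ}—there is no potential jump across the thin gap for $\mathbf{u}_b$, so $\nabla\mathbf{u}_b$ and $p_b$ stay bounded; I would make this quantitative by a barrier/maximum-principle estimate on $\Omega_{2R_0}$, giving $\|\nabla\mathbf{u}_b\|_{L^\infty(\Omega_{R_0})}+\|p_b\|_{L^\infty(\Omega_{R_0})}=O(1)$ and similarly for the starred object, with difference $O(r_\varepsilon)$.

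With these in hand, the comparison proceeds as follows. Split $\mathcal{B}_{\beta}-\mathcal{B}_{\beta}^{\ast}=\big(\int_{\Omega\setminus\Omega_{R_0}}-\int_{\Omega^\ast\setminus\Omega_{R_0}^\ast}\big)+\big(\int_{\Omega_{R_0}}-\int_{\Omega_{R_0}^\ast}\big)$ of the respective integrands. On the region away from the gap, $\Omega\setminus\Omega_{R_0}$ and $\Omega^\ast\setminus\Omega_{R_0}^\ast$ differ only by the $O(\varepsilon)$ vertical translation of $\partial D_i$, so a change of variables plus the $L^2$ closeness from step (i) and the coefficient convergence from step (ii) bound this by $O(\sqrt{\varepsilon})$, which is $\le O(r_\varepsilon)$. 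On the gap $\Omega_{R_0}$, I would use Cauchy–Schwarz: the factor $e(\mathbf{u}_b)$ (resp.\ its starred version) is $O(1)$ in $L^\infty$ by the previous paragraph, while $e(\mathbf{v}_1^\beta)-e(\mathbf{v}_1^{\ast\beta})$ and the geometric mismatch of the gaps contribute a quantity that, after the standard narrow-region volume bookkeeping (integrating the $O(\varepsilon^{-1})$-type singular profiles of the auxiliary functions against the $O(\varepsilon)$-width discrepancy and against $r_\varepsilon$-size coefficient errors), is precisely $O(r_\varepsilon)$ with the dimension-dependent exponent in \eqref{JTD}. The main obstacle I anticipate is this last estimate in dimensions $n>3$: there the building-block differences $C_1^\alpha-C_2^\alpha$ are only $O(1)$, not small, so one cannot afford to lose the full singular strength of $\nabla\mathbf{u}_1^\alpha$; the fix is to exploit the cancellation built into $\mathbf{u}_b$ (only the symmetric combination $\mathbf{u}_1^\alpha+\mathbf{u}_2^\alpha$ enters, and for the rotations $\alpha\ge n+1$ the profiles are milder), together with a careful interpolation between the $L^\infty$ gradient bound on $\mathbf{u}_b$ and the $L^2$ energy estimate, to squeeze out the exponent $\min\{\frac1{12},\frac{n-3}{24}\}$. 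Tracking these competing powers of $\varepsilon$ across all $\beta$ and all $n\ge 2$ simultaneously is where the bulk of the bookkeeping lies, but no new idea beyond steps (i)–(ii) and the narrow-region integration is needed.
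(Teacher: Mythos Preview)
Your ingredients (i) and (ii) are exactly right and are the substance of the paper's Lemmas~\ref{lemmabc}, \ref{KM323}, and \ref{COOO}. Where you diverge is in the final assembly: you propose a geometric near/far splitting of the integral $\int_\Omega(2\mu e(\mathbf{u}_b),e(\mathbf{v}_1^\beta))$ and a direct comparison of integrands on each piece. The paper instead observes that, by the very definitions \eqref{CTL001} and \eqref{KTG001},
\[
\mathcal{B}_\beta[\boldsymbol{\varphi}]=-\sum_\alpha C_2^\alpha\sum_{i=1}^2 a_{i1}^{\alpha\beta}+b_1^\beta,
\qquad
\mathcal{B}_\beta^\ast[\boldsymbol{\varphi}]=-\sum_\alpha C_\ast^\alpha\sum_{i=1}^2 a_{i1}^{\ast\alpha\beta}+b_1^{\ast\beta},
\]
so the difference is a three-term telescoping sum
\[
\sum_\alpha(C_\ast^\alpha-C_2^\alpha)\sum_i a_{i1}^{\alpha\beta}
+\sum_\alpha C_\ast^\alpha\Big(\sum_i a_{i1}^{\ast\alpha\beta}-\sum_i a_{i1}^{\alpha\beta}\Big)
+(b_1^\beta-b_1^{\ast\beta}),
\]
and each piece is $O(r_\varepsilon)$ directly from the lemmas. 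No domain splitting, no Cauchy--Schwarz on the gap, no further bookkeeping. Your route would work, but it re-derives on the level of $\mathcal{B}_\beta$ the comparisons already packaged inside the lemmas.

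One technical point in your gap step deserves caution: you record only $\|\nabla\mathbf{u}_b\|_{L^\infty(\Omega_{R_0})}=O(1)$, but with this alone the gap integral $\int_{\Omega_{R_0}}(2\mu e(\mathbf{u}_b),e(\mathbf{v}_1^\beta))$ is merely $O(1)$, and controlling its difference with the starred version then requires pointwise control on $e(\mathbf{v}_1^\beta)-e(\mathbf{v}_1^{\ast\beta})$ deep in the neck, which is not available (estimate \eqref{con035} only holds outside $\mathcal{C}_{\varepsilon^{1/24}}$). What the paper actually proves (Corollary~\ref{coro00z}) is the much stronger exponential decay $|\nabla\mathbf{u}_b|\le C\delta^{-n/2}e^{-1/(2C\delta^{1/2})}$ in $\Omega_{R_0}$; with this, each gap integral is itself exponentially small and can be discarded outright, sidestepping the difficulty you flag for $n>3$.
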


\begin{remark}
Combining \eqref{ZZWWWW}, \eqref{LFN001} and \eqref{KTG001}, we obtain that for $\beta=1,2,...,\frac{n(n+1)}{2}$,
\begin{align*}
&\left|\int_{\partial D_{1}^{\ast}\cap\partial\Omega_{R_{0}}^{\ast}}\boldsymbol{\psi}_{\beta}\cdot\sigma[\mathbf{u}^{\ast}_{b},p^{\ast}_{b}]\right|\notag\\
&=\left|\int_{\partial D_{1}^{\ast}\cap\partial\Omega_{R_{0}}^{\ast}}\sum^{\frac{n(n+1)}{2}}_{\alpha=1}C^{\alpha}_{\ast}\boldsymbol{\psi}_{\beta}\cdot\sigma[\mathbf{u}^{\ast\alpha}_{1}+\mathbf{u}^{\ast\alpha}_{2},p^{\ast\alpha}_{1}+p_{2}^{\ast\alpha}]+\int_{\partial D_{1}^{\ast}\cap\partial\Omega_{R_{0}}^{\ast}}\boldsymbol{\psi}_{\beta}\cdot\sigma[\mathbf{u}^{\ast}_{0},p^{\ast\alpha}_{0}]\right|\notag\\
&\leq C\int^{R_{0}}_{0}t^{-2}e^{-\frac{1}{2Ct}}dt\leq CR_{0}^{-1}e^{-\frac{1}{2CR_{0}}},
\end{align*}
where $\Omega_{R_{0}}^{\ast}:=\Omega_{R_{0}}|_{\varepsilon=0}=\Omega^{\ast}\cap\{|x'|<R_{0}\}$. Then we have
\begin{align}\label{TGY}
\mathcal{B}_{\beta}^{\ast}[\boldsymbol{\varphi}]=\mathcal{B}_{\beta}^{\ast}[\boldsymbol{\varphi}]|_{\partial D_{1}^{\ast}\setminus\partial\Omega^{\ast}_{R_{0}}}+O(1)R_{0}^{-1}e^{-\frac{1}{2CR_{0}}},
\end{align}
where $\mathcal{B}_{\beta}^{\ast}[\boldsymbol{\varphi}]|_{\partial D_{1}^{\ast}\setminus\partial\Omega^{\ast}_{R_{0}}}:=\int_{\partial D_{1}^{\ast}\setminus\partial\Omega_{R_{0}}^{\ast}}\boldsymbol{\psi}_{\beta}\cdot\sigma[\mathbf{u}^{\ast}_{b},p^{\ast}_{b}].$ The result in \eqref{TGY} plays a key role in application to the numerical computations and simulations. In fact, \eqref{TGY} means that it only needs to use regular meshes to computationally calculate $\mathcal{B}_{\beta}^{\ast}[\boldsymbol{\varphi}]|_{\partial D_{1}^{\ast}\setminus\partial\Omega^{\ast}_{R_{0}}}$, which corresponds to the portion outside the narrow region between these two particles. It is worth emphasizing that this computation method was first presented in \cite{KLY2015} for a similar blow-up factor in the perfect conductivity problem. See \cite{MZ2021} for the corresponding result on the elasticity problem.

\end{remark}

The paper is structured as follows. Section \ref{SEC005} is dedicated to the establishment of Theorem \ref{JGR}. A direct application of Theorem \ref{JGR} gives the optimal gradient estimates and asymptotics for the Cauchy strain tensor, see Theorem \ref{MAINZW002} and Corollary \ref{MGA001} in Section \ref{KGRA90}, respectively. Let us conclude this introduction by reviewing
some earlier relevant investigations.

Ammari, Kang, Kim and Yu \cite{AKKY2020} were the first to rigorously establish an asymptotic expansion for the Cauchy stress tensor for Stokes flow in two dimensions by introducing the explicit singular functions and using the method of bipolar coordinates. Recently, Li and Xu \cite{LX2022} overcame the difficulty caused by the pressure to develop an adapted iterate technique with respect to the energy and then obtained the optimal upper and lower bounds on the Cauchy stress tensor. The original idea of this iterate technique was first presented in previous work \cite{LLBY2014} to establish gradient estimates for solutions to a class of elliptic systems arising from composites.

Besides the Cauchy stress tensor, hydrodynamic forces and the effective viscosity are another two important quantities of interest in fluid mechanics. Recently, based on the dual variational principle, Gorb \cite{G2016} developed a clear and concise analytical method to obtain the asymptotic expansions of hydrodynamic forces for two nearly touching spherical particles, as one of the particles moves with prescribed translational and angular velocities to the second particle. Li, Wang and Zhao \cite{LWZ2020} then extended the results in \cite{G2016} to the general $m$-convex inclusions in two and three dimensions with $m\geq2$, which revealed that the singularities induced by rotation hold dominant position in all directions of the forces rather than linear motion. With regard to the effective viscosity, Berlyand, Gorb and Novikov \cite{BGN2009} considered a two-dimensional mathematical model of a highly concentrated suspension and introduced a $fictitious\;fluid\;approach$ for inclusions in a densely packed suspension, which allows one to capture all singular terms in the asymptotic expansions of the viscous dissipation rate under generic boundary condition. For more previous studies on hydrodynamic forces and the effective viscosity, see \cite{BBP2006,FA1967,G1981,NK1984,CB1967,C1974,GCB1967} and the reference therein.

It has been proved by Ammari, Garapon, Kang and Lee \cite{AGKL2008} that when  $\lambda\rightarrow\infty$ and $\mu$ is fixed, the Stokes system becomes the limit of the Lam\'{e} system $\mu\Delta\mathbf{u}+(\lambda+\mu)\nabla\nabla\cdot\mathbf{u}=0$, which models the elasticity problem arising from composite materials. So it is natural that there appears similar high stress concentration for the elasticity problem. Bao, Li and Li \cite{BLL2015,BLL2017} took advantage of the iterate technique built in \cite{LLBY2014} to establish the pointwise upper bounds on the gradient of solution to the Lam\'{e} system with partially infinite coefficients in the presence of two strictly convex inclusions, as the distance between these two inclusions goes to zero. The optimality of the blow-up rate has also been demonstrated in \cite{L2018,MZ2021}. In the case when $m$-convex inclusions approach closely to the external boundary, Li and Zhao \cite{LZ2019} established the optimal gradient estimates for the solution, which revealed a novel blow-up phenomena that some special boundary data can strengthen the singularities of the stress. Kang and Yu \cite{KY2019} utilized the layer potential techniques and the variational principle to obtain the optimal gradient blow-up rate of solution to the Lam\'{e} system for two closely located inclusions in dimension two. It is worth emphasizing that their method is different from the iterate technique above. For earlier relevant studies on gradient estimates of solutions to the elliptic equation and system arising from composites, we refer to \cite{BASL1999,LN2003,LV2000,LY2009,BC1984,BV2000,BLY2009,AKL2005,K1993,AKLLL2007,Y2007,Y2009} and the reference therein.

\section{Proof of Theorem \ref{JGR}}\label{SEC005}

Denote
\begin{align}\label{DEL010}
\delta:=\delta(x')=\varepsilon+2h(x')=\varepsilon+2\kappa|x'|^{2},
\end{align}
where $\varepsilon\geq0$ and $\kappa$ is defined by \eqref{CONVEX001}. For later use, introduce some constants as follows:
\begin{align}
a_{1}=&\frac{6}{n-1},\;\, a_{2}=-2,\;\,b_{1}=-\frac{12}{2n-1},\;\,b_{2}=\frac{3}{2\kappa(2n-1)},\label{constants001}\\
b_{3}=&-5,\;\,b_{4}=\frac{4(n+1)}{2n-1},\;\,b_{5}=-12\kappa,\;\,b_{6}=\frac{3}{\kappa(2n-1)}.\label{constants002}
\end{align}
Construct a family of vector-valued auxiliary functions $\bar{\mathbf{u}}_{i}^{\alpha}\in C^{2,\gamma}(\Omega;\mathbb{R}^{n})$, $i=1,2,$ $\alpha=1,2,...,\frac{n(n+1)}{2}$, satisfying that $\bar{\mathbf{u}}_{i}^{\alpha}=\boldsymbol{\psi}_{\alpha}$ on $\partial D_{i}$, $\bar{\mathbf{u}}_{i}^{\alpha}=0$ on $\partial D_{j}\cup\partial D$, $i,j=1,2,\,i\neq j$, $\|\bar{\mathbf{u}}_{i}^{\alpha}\|_{C^{2,\gamma}(\Omega\setminus\Omega_{R_{0}})}\leq C$, $\nabla\cdot\bar{\mathbf{u}}_{i}^{\alpha}=0$ in $\Omega_{2R_{0}}$, and
\begin{align}\label{zzwz002}
\bar{\mathbf{u}}_{i}^{\alpha}=&\boldsymbol{\psi}_{\alpha}\left(\frac{1}{2}+(-1)^{i-1}\mathfrak{G}\right)+(-1)^{i-1}\left(\mathfrak{G}^{2}-\frac{1}{4}\right)\boldsymbol{\mathcal{F}}_{\alpha},\quad \mathrm{in}\;\Omega_{2R_{0}},
\end{align}
where $\boldsymbol{\psi}_{\alpha}$ is given by (\ref{OPP}), and
\begin{align}\label{deta}
\mathfrak{G}:=\mathfrak{G}(x)=\frac{x_{n}}{\delta},
\end{align}
and $\boldsymbol{\mathcal{F}}_{\alpha}=(\mathcal{F}_{\alpha}^{1},\mathcal{F}_{\alpha}^{2},...,\mathcal{F}_{\alpha}^{n})$ is defined by
\begin{align}\label{QLA001}
\boldsymbol{\mathcal{F}}_{\alpha}=&
\begin{cases}
\frac{\partial_{\alpha}\delta}{2}\boldsymbol{\psi}_{n},&\alpha=1,...,n-1,\\
\sum\limits^{n-1}_{i=1}\frac{a_{1}x_{i}}{\delta}\boldsymbol{\psi}_{i}+\frac{x_{n}}{\delta}\big(\frac{a_{1}(x'\cdot\nabla_{x'}\delta)}{\delta}+a_{2}\big)\boldsymbol{\psi}_{n},&\alpha=n,\\
\mathbf{0},&\alpha=2n,...,\frac{n(n+1)}{2},\,n\geq3,
\end{cases}
\end{align}
and
\begin{align}\label{DANZW001}
\boldsymbol{\mathcal{F}}_{\alpha}=&\sum\limits^{n-1}_{i=1}\frac{b_{1}x_{i}x_{\alpha-n}}{\delta}\boldsymbol{\psi}_{i}+(b_{2}+b_{3}x_{n}\mathfrak{G})\boldsymbol{\psi}_{\alpha-n}\notag\\
&+\Big[x_{\alpha-n}\mathfrak{G}\Big(\frac{b_{1}(x'\cdot\nabla_{x'}\delta)}{\delta}+b_{4}\Big)+b_{5}x_{\alpha-n}x_{n}\mathfrak{G}^{2}\Big]\boldsymbol{\psi}_{n},
\end{align}
for $\alpha=n+1,...,2n-1.$

In order to eliminate the maximal singular terms contained in $\mu\partial_{nn}\bar{\mathbf{u}}^{\alpha}_{i}$, $i=1,2,$ $\alpha=1,2,...,\frac{n(n+1)}{2}$, we construct the corresponding scalar auxiliary functions $\bar{p}_{i}^{\alpha}$, $i=1,2,$ $\alpha=1,2,...,\frac{n(n+1)}{2}$ as follows:
\begin{align}\label{PMAIN002}
\bar{p}_{i}^{\alpha}=&(-1)^{i-1}
\begin{cases}
\frac{\mu x_{n}\partial_{\alpha}\delta}{\delta^{2}},&\alpha=1,...,n-1,\\
-\frac{\mu a_{1}}{4\kappa\delta^{2}}+\frac{3\mu x_{n}^{2}}{\delta^{3}}\big(\frac{a_{1}(x'\cdot\nabla_{x'}\delta)}{\delta}+a_{2}\big),&\alpha=n,\\
\frac{\mu x_{\alpha-n}}{\delta^{2}}\Big[\frac{3x_{n}^{2}}{\delta}\big(\frac{b_{1}(x'\cdot\nabla_{x'}\delta)}{\delta}+b_{4}\big)+b_{6}\Big],&\alpha=n+1,...,2n-1,\\
0,&\alpha=2n,...,\frac{n(n+1)}{2},\,n\geq3,
\end{cases}
\end{align}
where $a_{i}$, $i=1,2,$ and $b_{j},$ $j=1,4,6$ are given by \eqref{constants001}--\eqref{constants002}. These auxiliary functions verify that for $i=1,2$,

$(1)$ for $j=1,...,n-1$,
\begin{align}\label{DNZ001}
&\mu\partial_{nn}(\bar{\mathbf{u}}_{i}^{\alpha})^{(j)}-\partial_{j}\bar{p}_{i}^{\alpha}\notag\\
&=
\begin{cases}
-\mu x_{n}\partial_{j}(\delta^{-2}\partial_{\alpha}\delta),&\alpha=1,...,n-1,\\
-3\mu x_{n}^{2}\partial_{j}\big[\delta^{-3}\big(\frac{a_{1}(x'\cdot\nabla_{x'}\delta)}{\delta}+a_{2}\big)\big],&\alpha=n,\\
\frac{\mu b_{3}(24\mathfrak{G}^{2}-1)}{2\delta}-3\mu x_{n}^{2}\partial_{j}\big[x_{\alpha-n}\delta^{-3}\big(\frac{b_{1}(x'\cdot\nabla_{x'}\delta)}{\delta}+b_{4}\big)\big],&\alpha=n+1,...,2n-1,\\
0,&\alpha=2n,...,\frac{n(n+1)}{2},\,n\geq3;
\end{cases}
\end{align}

$(2)$ for $j=n$,
\begin{align}\label{DNZ002}
&\mu\partial_{nn}(\bar{\mathbf{u}}_{i}^{\alpha})^{(n)}-\partial_{n}\bar{p}_{i}^{\alpha}=
\begin{cases}
\frac{4\mu b_{5}x_{\alpha-n}\mathfrak{G}(5\mathfrak{G}^{2}-6)}{\delta},&\alpha=n+1,...,2n-1,\\
0,&\text{otherwise}.
\end{cases}
\end{align}

We here would like to emphasize that the auxiliary functions defined by \eqref{zzwz002} and \eqref{PMAIN002} were first given in \cite{LX2022} in two and three dimensions. These auxiliary functions can be found by using method of undetermined coefficients. For readers' convenience, we now give the detailed calculations for the correction terms in \eqref{QLA001}. Take $\bar{\mathbf{u}}_{1}^{\alpha}$, $\alpha=n+1,...,2n-1$ for example. Other cases are the same. In view of \eqref{DEL010} and \eqref{deta}, we have
\begin{align*}
\partial_{i}\delta=4\kappa x_{i},\;\, \partial_{i}\mathfrak{G}=-4\kappa x_{i}\mathfrak{G}\partial_{n}\mathfrak{G},\quad i=1,...,n-1.
\end{align*}
This, in combination with \eqref{zzwz002} and \eqref{DANZW001}, leads to that
\begin{align*}
\nabla\cdot\bar{\mathbf{u}}_{1}^{\alpha}=&-\left(1+\frac{b_{1}n+b_{4}}{4}\right)x_{\alpha-n}\partial_{n}\mathfrak{G}+(b_{1}n+3b_{4}-8\kappa b_{2})x_{\alpha-n}\mathfrak{G}^{2}\partial_{n}\mathfrak{G}\notag\\
&-\left(4\kappa+\frac{3b_{5}-4\kappa b_{3}}{4}\right)x_{\alpha-n}\mathfrak{G}^{2}+(5b_{5}-12\kappa b_{3})x_{\alpha-n}\mathfrak{G}^{4}.
\end{align*}
For the purpose of letting $\bar{\mathbf{u}}_{1}^{\alpha}$ satisfy the incompressible condition, it suffices to require that
\begin{align*}
\begin{cases}
1+\frac{b_{1}n+b_{4}}{4}=0,\\
b_{1}n+3b_{4}-8\kappa b_{2}=0,\\
4\kappa+\frac{3b_{5}-4\kappa b_{3}}{4}=0,\\
5b_{5}-12\kappa b_{3}=0.
\end{cases}
\end{align*}
Then we deduce
\begin{align}\label{AGZW001}
b_{1}=-\frac{6+4\kappa b_{2}}{n},\;\, b_{3}=-5,\;\, b_{4}=2+4\kappa b_{2},\;\,b_{5}=-12\kappa.
\end{align}
In light of \eqref{PMAIN002} and in order to get ride of the largest singular terms of order $O(\delta^{-2})$ in $\mu\partial_{nn}(\bar{\mathbf{u}}^{\alpha}_{1})^{(i)}$, $i=1,...,n-1,$ it also needs to require that
\begin{align*}
\begin{cases}
\frac{2\mu}{\delta^{2}}\left(\frac{b_{1}x_{\alpha-n}^{2}}{\delta}+b_{2}\right)=\mu\partial_{\alpha-n}(b_{6}x_{\alpha-n}\delta^{-2}),\\
\frac{2\mu b_{1}x_{i}x_{\alpha-n}}{\delta^{3}}=\mu x_{\alpha-n}\partial_{i}(b_{6}\delta^{-2}),\quad i=1,...,n-1,\,i\neq\alpha-n,\,n\geq3,
\end{cases}
\end{align*}
which implies that
\begin{align*}
\begin{cases}
(2b_{2}-b_{6})\delta^{-2}+(2b_{1}+8\kappa b_{6})x_{\alpha-n}^{2}\delta^{-3}=0,\\
(2b_{1}+8\kappa b_{6})x_{i}x_{\alpha-n}\delta^{-3}=0,\quad i=1,...,n-1,\,i\neq\alpha-n,\,n\geq3.
\end{cases}
\end{align*}
Then we obtain
\begin{align*}
b_{1}=-4\kappa b_{6},\;\,b_{2}=\frac{b_{6}}{2}.
\end{align*}
This, together with \eqref{AGZW001}, shows that
\begin{align*}
b_{1}=&-\frac{12}{2n-1},\;\,b_{2}=\frac{3}{2\kappa(2n-1)},\;\,b_{3}=-5,\notag\\
b_{4}=&\frac{4(n+1)}{2n-1},\;\,b_{5}=-12\kappa,\;\,b_{6}=\frac{3}{\kappa(2n-1)}.
\end{align*}


For $i=1,2,$ and $\alpha=1,2,...,\frac{n(n+1)}{2}$, denote
\begin{align}\label{QMAZ001}
(q^{\alpha}_{i})_{\delta;x'}:=\frac{1}{|\Omega_{\delta}(x')|}\int_{\Omega_{\delta}(x')}q^{\alpha}_{i}(y)dy,\quad q^{\alpha}_{i}:=p^{\alpha}_{i}-\bar{p}^{\alpha}_{i},\quad\mathrm{for}\;|x'|\leq R_{0},
\end{align}
where $\delta:=\delta(x')$ is given by \eqref{DEL010}. Then we have
\begin{prop}\label{thm86}
Assume as above. For $i=1,2,\,\alpha=1,2,...,\frac{n(n+1)}{2}$, let $(\mathbf{u}_{i}^{\alpha},p_{i}^{\alpha})\in H^{1}(\Omega;\mathbb{R}^{n})$ be the solution of \eqref{qaz003az}. Then, for a arbitrarily small $\varepsilon>0$ and $x\in\Omega_{R_{0}}$,
\begin{align}\label{Le2.025}
&\|\nabla(\mathbf{u}_{i}^{\alpha}-\bar{\mathbf{u}}_{i}^{\alpha})\|_{L^{\infty}(\Omega_{\delta/2}(x'))}+\|q_{i}^{\alpha}-(q_{i}^{\alpha})_{\delta;x'}\|_{L^{\infty}(\Omega_{\delta/2}(x'))}\notag\\
&\leq C\begin{cases}
1,&\alpha=1,...,n-1,\\
\delta^{-1/2},&\alpha=n,\\
1,&\alpha=n+1,...,2n-1,\\
\delta^{1/2},&\alpha=2n,...,\frac{n(n+1)}{2},\,n\geq3,
\end{cases}
\end{align}
where $\delta$ is defined by \eqref{DEL010}, $\bar{\mathbf{u}}_{i}^{\alpha}$ is given by \eqref{zzwz002}, $q_{i}^{\alpha}$ and $(q^{\alpha}_{i})_{\delta;x'}$, $i=1,2,\,\alpha=1,2,...,\frac{n(n+1)}{2}$ are defined by \eqref{QMAZ001}.
\end{prop}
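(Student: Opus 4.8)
The plan is to pass to the differences $w_i^\alpha := \mathbf u_i^\alpha - \bar{\mathbf u}_i^\alpha$ and $q_i^\alpha := p_i^\alpha - \bar p_i^\alpha$ and to run the iterated energy argument of \cite{LLBY2014}, in the form adapted to the Stokes pressure in \cite{LX2022}, on the thin square $\Omega_{\delta(z')}(z')$. By the boundary conditions in \eqref{qaz003az} together with the properties of $\bar{\mathbf u}_i^\alpha$ listed above, $(w_i^\alpha,q_i^\alpha)$ vanishes on $\partial\Omega$ and satisfies
\begin{align*}
\begin{cases}
\mu\Delta w_i^\alpha - \nabla q_i^\alpha = -\big(\mu\Delta\bar{\mathbf u}_i^\alpha - \nabla\bar p_i^\alpha\big) =: \mathbf f_i^\alpha & \text{in }\Omega,\\
\nabla\cdot w_i^\alpha = -\nabla\cdot\bar{\mathbf u}_i^\alpha =: g_i^\alpha & \text{in }\Omega,\\
w_i^\alpha = 0 & \text{on }\partial\Omega.
\end{cases}
\end{align*}
Since $\nabla\cdot\bar{\mathbf u}_i^\alpha=0$ in $\Omega_{2R_0}$ we have $g_i^\alpha\equiv0$ there, and $g_i^\alpha,\mathbf f_i^\alpha$ are bounded in $C^\gamma(\Omega\setminus\Omega_{R_0})$; hence away from the origin, where the geometry is $\varepsilon$-independent, \eqref{Le2.025} follows at once from the standard interior and boundary estimates for the Stokes system. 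Inside the neck, $\mathbf f_i^\alpha$ is built only from the remainder terms in \eqref{DNZ001}--\eqref{DNZ002}, the leading $O(\delta^{-2})$ contributions having been cancelled by the choice of $\bar p_i^\alpha$; using $|x'|\le C\sqrt{\delta(x')}$ on the neck one checks that $|\mathbf f_i^\alpha(x)|\le C\,\delta^{-1}(x')\,\mathcal M_\alpha(\delta(x'))$ on $\Omega_{R_0}$, where $\mathcal M_\alpha$ denotes the right-hand side of \eqref{Le2.025}. It thus remains to prove \eqref{Le2.025} at $x'=z'$ with $|z'|\le R_0$.

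For the seed of the iteration I would use the variational characterisation of $\mathbf u_i^\alpha$ as the minimiser of $\int_\Omega\mu|e(\cdot)|^2$ among divergence-free fields with the prescribed boundary data: testing against $\bar{\mathbf u}_i^\alpha$, corrected by a Bogovskii field supported in $\Omega\setminus\Omega_{2R_0}$ to restore the divergence constraint there, gives the crude bound $\int_\Omega|e(w_i^\alpha)|^2\le C\big(1+\int_{\Omega_{R_0}}|e(\bar{\mathbf u}_i^\alpha)|^2\big)$. The core is then the localised iteration. Fix $|z'|\le R_0$ and write $\delta=\delta(z')$. Because $w_i^\alpha=0$ on $\Gamma^\pm_t$, the thin-domain Poincar\'e inequality gives $\int_{\Omega_t(z')}|w_i^\alpha|^2\le C\delta^2\int_{\Omega_t(z')}|\partial_n w_i^\alpha|^2$; combining this with a Caccioppoli-type inequality for the Stokes system on the nested regions $\Omega_s(z')\subset\Omega_t(z')$, $\delta\le s<t\le R_0$, and iterating finitely many times in the spirit of \cite{LLBY2014,LX2022}, I expect to reach
\[
\int_{\Omega_{\delta}(z')}|e(w_i^\alpha)|^2+\int_{\Omega_{\delta}(z')}\big|q_i^\alpha-(q_i^\alpha)_{\delta;z'}\big|^2\le C\,\delta^{\,n}\,\mathcal M_\alpha(\delta)^2 .
\]

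The last step is a rescaling. Map $\Omega_{\delta}(z')$ to a domain of geometry bounded uniformly in $\varepsilon$ and $z'$ via $x=(z'+\delta y',\delta y_n)$, and set $\hat w(y)=\delta^{-1}w_i^\alpha(x)$, $\hat q(y)=q_i^\alpha(x)-(q_i^\alpha)_{\delta;z'}$, so that $\nabla_y\hat w=\nabla_x w_i^\alpha$ and $(\hat w,\hat q)$ solves a Stokes system on the rescaled domain with right-hand side $\delta\,\mathbf f_i^\alpha$ (of size $\le C\mathcal M_\alpha(\delta)$ by the first paragraph), with zero data on the rescaled images of $\Gamma^\pm$, and with energy $\le C\mathcal M_\alpha(\delta)^2$ by the displayed local estimate. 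Standard boundary regularity for the Stokes system (e.g.\ $W^{2,p}$ followed by Sobolev embedding, together with Korn's inequality) then bounds $\|\nabla_y\hat w\|_{L^\infty}$ and $\|\hat q\|_{L^\infty}$ on the half-size domain by $C\mathcal M_\alpha(\delta)$, and undoing the scaling yields \eqref{Le2.025}.

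The main obstacle, and the feature that separates this from the scalar conductivity problem and the Lam\'e system, is the pressure. The cutoff in the Caccioppoli step produces a term $\int q_i^\alpha\,\nabla\cdot(\eta^2 w_i^\alpha)$ which, since $\nabla\cdot w_i^\alpha=0$ in the neck, localises to the overlap annulus but still cannot be discarded; the device is to work throughout with the oscillation $q_i^\alpha-(q_i^\alpha)_{\delta;z'}$ and to estimate it, via a Bogovskii operator on the thin annulus, back in terms of $\int|\nabla w_i^\alpha|^2$ and $\int|\mathbf f_i^\alpha|^2$, so that it can be absorbed into the left-hand side of the iteration. Keeping the Bogovskii constants on these very thin annular regions uniform in $\varepsilon$ — which is exactly the adapted iterate technique with respect to the energy of \cite{LX2022} — is the delicate point. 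A secondary, essentially bookkeeping, difficulty is to verify that $\mathbf f_i^\alpha$ and $g_i^\alpha$ genuinely obey the four distinct size regimes forced by \eqref{zzwz002}, \eqref{PMAIN002} and \eqref{DNZ001}--\eqref{DNZ002}, and to carry these four cases through the iteration and the rescaling.
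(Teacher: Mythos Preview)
Your overall architecture — pass to the difference $(w_i^\alpha,q_i^\alpha)$, run a localized iterated energy argument on $\Omega_t(z')$, then rescale and apply $W^{2,p}$ Stokes estimates — matches the paper exactly, and your identification of the pressure term $\int (q_i^\alpha-\text{avg})\,\nabla\cdot(\eta^2 w_i^\alpha)$ as the main obstacle in the Caccioppoli step is on target. The paper handles that term precisely as you describe: rescale $\Omega_s(z')$ to unit size and apply a Bogovski\u{\i}-type estimate (Lemma~\ref{lem002m}) to bound the pressure oscillation by $\|\nabla w\|_{L^2}+\delta\|\mathbf f\|_{L^2}$, then absorb.

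There is, however, a real gap in your seed estimate. Your variational bound
\[
\int_\Omega|e(w_i^\alpha)|^2\le C\Big(1+\int_{\Omega_{R_0}}|e(\bar{\mathbf u}_i^\alpha)|^2\Big)
\]
is correct but, for $\alpha=1,\dots,n$, the right-hand side is of order $\rho_n(\varepsilon)$ (so $\varepsilon^{-1/2}$ when $n=2$, $|\ln\varepsilon|$ when $n=3$), not $O(1)$. The iteration runs only over scales $\delta(z')\le t\le \vartheta\sqrt{\delta(z')}$, giving $N\sim 1/\sqrt{\delta(z')}$ steps; the seed enters as $4^{-N}\cdot(\text{global energy})$. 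For $|z'|$ bounded away from $0$ (where $\delta(z')\sim R_0^2$ and $N\sim 1/R_0$ is a fixed finite number) this contributes $4^{-c/R_0}\rho_n(\varepsilon)\to\infty$ as $\varepsilon\to0$, which swamps the target $\delta^n\mathcal M_\alpha(\delta)^2=O(1)$. Your fallback ``standard estimates away from the origin'' does not rescue this: it gives $|\nabla w|\le C$ only where $\delta$ is bounded below by a fixed constant, and the iteration region for small $|z'|$ still reaches into the zone where neither argument closes.

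The paper's Step~1 proves instead the \emph{uniform} bound $\int_\Omega|\nabla w_i^\alpha|^2\le C$, and this is not routine. The point is structural: although $|\nabla\cdot\sigma[\bar{\mathbf u}_i^\alpha,\bar p_i^\alpha]|$ is pointwise $\sim\delta^{-1}$ or worse, each component of $\mu\partial_{nn}\bar{\mathbf u}_i^\alpha-\nabla\bar p_i^\alpha$ and of $\mu\Delta_{x'}\bar{\mathbf u}_i^\alpha$ can be written as $\partial_{x_n}(\text{bounded})$ after one or two integrations by parts in the narrow region (see the functions $\mathcal A_{ij}^\alpha$, $\mathcal B_j^\alpha$ in the proof). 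Testing against $w_i^\alpha$ and integrating by parts in $x_n$, using $w_i^\alpha=0$ on $\Gamma^\pm$, therefore produces terms controlled by $C\|\nabla w_i^\alpha\|_{L^2(\Omega)}$ rather than by $C\rho_n(\varepsilon)^{1/2}\|\nabla w_i^\alpha\|_{L^2}$. This cancellation --- not the variational comparison --- is what makes the global seed $O(1)$ and lets the iteration close for all $|z'|\le R_0$. You should replace your seed argument by this structural integration by parts.
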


Before utilizing the iteration technique developed in \cite{LX2022} to prove Proposition \ref{thm86}, we first recall the following two lemmas.
\begin{lemma}\label{lem001m}
Let $\Omega\subset\mathbb{R}^{n}(n\geq2)$ be a bounded Lipschitz domain. Then for any fixed $f\in L^{2}(\Omega)$ with $\int_{\Omega}f=0$, there exists a vector-valued function $\boldsymbol{\phi}\in H^{1}_{0}(\Omega;\mathbb{R}^{n})$ such that
\begin{align}\label{PAZ001}
\nabla\cdot\boldsymbol{\phi}=f\;\,\text{in $\Omega$,}\quad\|\boldsymbol{\phi}\|_{H^{1}_{0}(\Omega)}\leq C(n,\mathrm{diam}(\Omega))\|f\|_{L^{2}(\Omega)}.
\end{align}
In particular, if $\Omega=B_{r}$, $r>0$, then
\begin{align}\label{SPHE001}
\|\boldsymbol{\phi}\|_{L^{2}(B_{r})}+r\|\nabla\boldsymbol{\phi}\|_{L^{2}(B_{r})}\leq C(n)r\|f\|_{L^{2}(B_{r})}.
\end{align}

\end{lemma}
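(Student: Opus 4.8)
This is the classical solvability of the divergence equation due to Bogovskii (equivalently, the Lions--Ne\v{c}as lemma), and the plan is to recall the constructive proof, since it produces an explicit constant and behaves transparently under rescaling --- which is exactly what will reduce \eqref{SPHE001} to \eqref{PAZ001}. \emph{Step 1 (star-shaped model domain).} I would first treat the case in which $\Omega$ is star-shaped with respect to an open ball $B$ with $\overline{B}\subset\Omega$ and diameter comparable to that of $\Omega$. Fix $\omega\in C_c^\infty(B)$ with $\int_B\omega=1$ and set
\[
\boldsymbol{\phi}(x)=\int_\Omega f(y)\,\mathbf{N}(x,y)\,dy,\qquad
\mathbf{N}(x,y)=\frac{x-y}{|x-y|^{n}}\int_{|x-y|}^{\infty}\omega\!\left(y+s\,\frac{x-y}{|x-y|}\right)s^{n-1}\,ds .
\]
Differentiating under the integral gives the pointwise identity $\nabla\cdot\boldsymbol{\phi}=f-\omega\int_\Omega f$, so the hypothesis $\int_\Omega f=0$ yields $\nabla\cdot\boldsymbol{\phi}=f$; moreover $\boldsymbol{\phi}$ has compact support in $\Omega$ whenever $f\in C_c^\infty(\Omega)$, which together with the estimates below will give $\boldsymbol{\phi}\in H_0^1(\Omega;\mathbb{R}^n)$ for general $f$ by density. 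The bound on $\boldsymbol{\phi}$ itself is elementary, since $|\mathbf{N}(x,y)|\lesssim|x-y|^{1-n}$ is weakly singular and Young's inequality gives $\|\boldsymbol{\phi}\|_{L^2}\le C\|f\|_{L^2}$, with $C$ depending only on $n$ and $\mathrm{diam}(\Omega)$ through the normalization of $\omega$.

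\emph{Step 2 (the gradient estimate, the main obstacle).} The heart of the matter is $\|\nabla\boldsymbol{\phi}\|_{L^2}\le C\|f\|_{L^2}$. Differentiating $\mathbf{N}$ once more in $x$ produces a kernel that splits into a Calder\'on--Zygmund part, homogeneous of degree $-n$ in $x-y$ and with vanishing mean over spheres centred at $y$, plus a remainder of order $|x-y|^{1-n}$ handled as in Step 1; the Calder\'on--Zygmund part is bounded on $L^2(\mathbb{R}^n)$, and in fact on every $L^p$ with $1<p<\infty$, by the standard singular-integral theory. Verifying this cancellation structure and invoking (or reproving) the $L^2$-boundedness of the associated singular integral operator is where the real work lies; all constants again depend only on $n$ and $\mathrm{diam}(\Omega)$.

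\emph{Step 3 (general Lipschitz domains).} A bounded Lipschitz domain is a finite union $\Omega=\bigcup_{k=1}^{m}\Omega_k$ of subdomains each star-shaped with respect to a ball. I would choose a partition of unity $\{\zeta_k\}$ subordinate to this cover and write $f=\sum_k\zeta_k f$; since $\int_{\Omega_k}\zeta_k f$ need not vanish, I would correct by a \emph{daisy chain}: with $g_k$ a fixed mean-one bump supported in $\Omega_k\cap\Omega_{k+1}$, put $f_1=\zeta_1 f-c_1 g_1$, pass the defect $c_1 g_1$ into the next patch, and iterate so that $\int_{\Omega_k}f_k=0$ for every $k$ and $\sum_k f_k=f$. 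Applying Steps 1--2 to each pair $(f_k,\Omega_k)$ and extending the resulting fields by zero then yields $\boldsymbol{\phi}=\sum_k\boldsymbol{\phi}_k\in H_0^1(\Omega;\mathbb{R}^n)$ with $\nabla\cdot\boldsymbol{\phi}=f$ and $\|\boldsymbol{\phi}\|_{H_0^1(\Omega)}\le C\|f\|_{L^2(\Omega)}$, the constant now also absorbing the finite, geometry-dependent overlap data. \emph{Step 4 (the ball).} For $\Omega=B_r$ one reads \eqref{SPHE001} either directly off the formula of Step 1 with $\omega$ scaled to $B_{r/2}$, or from \eqref{PAZ001} on $B_1$ by rescaling: if $\nabla\cdot\boldsymbol{\phi}_1=f(r\,\cdot)$ on $B_1$ with $\|\boldsymbol{\phi}_1\|_{L^2(B_1)}+\|\nabla\boldsymbol{\phi}_1\|_{L^2(B_1)}\le C(n)\|f(r\,\cdot)\|_{L^2(B_1)}$, then $\boldsymbol{\phi}(x):=r\,\boldsymbol{\phi}_1(x/r)$ satisfies $\nabla\cdot\boldsymbol{\phi}=f$ on $B_r$, and changing variables in the three $L^2$-norms turns the unit-ball inequality into exactly $\|\boldsymbol{\phi}\|_{L^2(B_r)}+r\|\nabla\boldsymbol{\phi}\|_{L^2(B_r)}\le C(n)\,r\|f\|_{L^2(B_r)}$.

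As an alternative --- useful if one is content to cite standard facts --- existence together with \eqref{PAZ001} is equivalent, through the closed-range theorem, to the Ne\v{c}as/Lions inequality $\inf_{c\in\mathbb{R}}\|g-c\|_{L^2(\Omega)}\le C\|\nabla g\|_{H^{-1}(\Omega)}$, valid on bounded Lipschitz domains: the desired $\boldsymbol{\phi}$ is then minus the adjoint of the gradient operator restricted to mean-zero $L^2$ functions, and \eqref{SPHE001} again follows by the same scaling. On this route the main obstacle merely relocates to the Ne\v{c}as/Lions inequality, which itself rests either on the Bogovskii construction above or on a compactness--contradiction argument built on the Rellich embedding $L^2(\Omega)\hookrightarrow\hookrightarrow H^{-1}(\Omega)$.
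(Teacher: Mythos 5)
The paper does not prove this lemma itself: it simply cites \cite{T1984} for the existence and $H^1_0$-bound \eqref{PAZ001} on a bounded Lipschitz domain, and then remarks that the ball estimate \eqref{SPHE001} follows by a rescaling argument, referring to Corollary~3.3 of \cite{LX2022}. Your proposal is correct and in fact supplies what those citations leave implicit. Steps~1--3 reconstruct the Bogovskii construction (explicit weakly singular kernel on a star-shaped patch, Calder\'on--Zygmund argument for the gradient bound, partition-of-unity with a ``daisy chain'' of mean-zero corrections to pass to a general Lipschitz domain), while the alternative route you sketch at the end --- the closed-range/Ne\v{c}as--Lions inequality --- is the one actually followed in Temam's book, so both halves of your argument track the cited source faithfully. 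Your Step~4 is exactly the rescaling the paper invokes: setting $\boldsymbol{\phi}(x)=r\,\boldsymbol{\phi}_1(x/r)$ and changing variables in the three $L^2$-norms turns the unit-ball estimate into \eqref{SPHE001}, with a constant depending only on $n$ and independent of $r$, as claimed.
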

The proof of lemma \ref{lem001m} can be seen in \cite{T1984}. Using Lemma \ref{lem001m}, we obtain the following result, which can be seen in Lemma 3.2 of \cite{LX2022}.

\begin{lemma}\label{lem002m}
Let $(\mathbf{w},q)$ be the solution of $\nabla\cdot\sigma[\mathbf{w},q]=\mathbf{g}$ in $\Omega$ with $\mathbf{g}\in L^{2}(\Omega;\mathbb{R}^{n})$. Then
\begin{align}\label{PAZ002}
\|q-q_{_\Omega}\|_{L^{2}(\Omega)}\leq C(\mu,n,\mathrm{diam}(\Omega))(\|\nabla\mathbf{w}\|_{L^{2}(\Omega)}+\|\mathbf{g}\|_{L^{2}(\Omega)}),
\end{align}
where $q_{_\Omega}:=\frac{1}{|\Omega|}\int_{\Omega}q\,dx$. Especially when $\Omega=B_{r}$, $r>0$, we have
\begin{align}\label{SPHE002}
\|q-q_{_{B_{r}}}\|_{L^{2}(B_{r})}\leq C(\mu,n)(\|\nabla\mathbf{w}\|_{L^{2}(B_{r})}+r\|\mathbf{g}\|_{L^{2}(B_{r})}).
\end{align}

\end{lemma}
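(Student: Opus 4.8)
The plan is to represent both blow-up factors through the Stokes energy form and to reduce their difference to a few elementary defects, each of which is controlled by Proposition~\ref{thm86} together with the precise asymptotics of the stiffness matrix. \emph{Step 1 (energy representation).} Applying the first Green identity for the Stokes operator with test field $\mathbf{u}_{1}^{\beta}$ (equal to $\boldsymbol{\psi}_{\beta}$ on $\partial D_{1}$ and vanishing on $\partial D_{2}\cup\partial D$) and using $\nabla\cdot\mathbf{u}_{1}^{\beta}=\nabla\cdot\mathbf{u}_{b}=0$, $\nabla\cdot\sigma[\mathbf{u}_{b},p_{b}]=0$, one obtains, up to a fixed sign,
\[
\mathcal{B}_{\beta}[\boldsymbol{\varphi}]=b_{1}^{\beta}-\sum_{\alpha=1}^{\frac{n(n+1)}{2}}C_{2}^{\alpha}\sum_{j=1}^{2}a_{1j}^{\alpha\beta},
\]
where $a_{ij}^{\alpha\beta}:=\int_{\Omega}\big(2\mu\,e(\mathbf{u}_{i}^{\alpha}),e(\mathbf{u}_{j}^{\beta})\big)$ and $b_{i}^{\beta}:=-\int_{\partial D}\mathbf{u}_{0}\cdot\sigma[\mathbf{u}_{i}^{\beta},p_{i}^{\beta}]$ are the $\Omega$-analogues of \eqref{LMZR}, and the identical computation on $\Omega^{\ast}$ gives $\mathcal{B}_{\beta}^{\ast}[\boldsymbol{\varphi}]=b_{1}^{\ast\beta}-\sum_{\alpha}C_{\ast}^{\alpha}\sum_{j}a_{1j}^{\ast\alpha\beta}$. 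When $n=2,3$ the possibly singular entries enter only through the ``welded'' combinations $\sum_{i}a_{i1}$, $\sum_{j}a_{1j}$, $\sum_{i,j}a_{ij}$ exactly as in the blocks of \eqref{WZW}: since $\bar{\mathbf{u}}_{1}^{\alpha}+\bar{\mathbf{u}}_{2}^{\alpha}=\boldsymbol{\psi}_{\alpha}$ in the neck, one has $e(\bar{\mathbf{u}}_{1}^{\alpha})+e(\bar{\mathbf{u}}_{2}^{\alpha})=0$ there and these combinations involve only bounded energies. Subtracting and regrouping,
\begin{align*}
\mathcal{B}_{\beta}[\boldsymbol{\varphi}]-\mathcal{B}_{\beta}^{\ast}[\boldsymbol{\varphi}]
&=(b_{1}^{\beta}-b_{1}^{\ast\beta})-\sum_{\alpha}(C_{2}^{\alpha}-C_{\ast}^{\alpha})\sum_{j}a_{1j}^{\alpha\beta}\\
&\qquad-\sum_{\alpha}C_{\ast}^{\alpha}\sum_{j}\big(a_{1j}^{\alpha\beta}-a_{1j}^{\ast\alpha\beta}\big),
\end{align*}
so it remains to show each of these three sums is $O(r_{\varepsilon})$.

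\emph{Step 2 (asymptotics of the matrix and load vector).} Away from the neck, Proposition~\ref{thm86} together with interior and boundary Schauder estimates for the Stokes system gives $\mathbf{u}_{i}^{\alpha}\to\mathbf{u}_{i}^{\ast\alpha}$, $p_{i}^{\alpha}\to p_{i}^{\ast\alpha}$ at a power rate; since $\partial D$ is a positive distance from the origin this yields $b_{1}^{\beta}-b_{1}^{\ast\beta}=O(\sqrt{\varepsilon})$ for $n\neq3$ and $O(|\ln\varepsilon|^{-1})$ for $n=3$, both $O(r_{\varepsilon})$. For the stiffness entries I would insert $\mathbf{u}_{i}^{\alpha}=\bar{\mathbf{u}}_{i}^{\alpha}+(\mathbf{u}_{i}^{\alpha}-\bar{\mathbf{u}}_{i}^{\alpha})$ into $\int_{\Omega_{R_{0}}}\big(2\mu\,e(\mathbf{u}_{i}^{\alpha}),e(\mathbf{u}_{j}^{\beta})\big)$: the principal term is an explicit integral of powers of $\delta=\varepsilon+2\kappa|x'|^{2}$, whose magnitude depends on $n$ and on which $\boldsymbol{\psi}_{\alpha}$ occur (normal translation giving order $\varepsilon^{-1/2}$ for $n=2$, $|\ln\varepsilon|$ for $n=3$, $O(1)$ for $n>3$; rotations strictly lower; all off-diagonal entries $O(1)$ with explicit limit $a_{1j}^{\ast\alpha\beta}$ — this last identification is the content of Lemma~\ref{lemmabc} and is the longest computation), while the cross terms and the remainder are estimated by the pointwise bounds \eqref{Le2.025} and integrated in $x'$ over $\{|x'|<R_{0}\}$. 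Combined with the matching $O(\varepsilon)$ geometric correction between $\partial D_{i}$ and $\partial D_{i}^{\ast}$, this gives $\sum_{j}a_{1j}^{\alpha\beta}=\sum_{j}a_{1j}^{\ast\alpha\beta}+O(r_{\varepsilon})$, and the powers of $\varepsilon$ forming $r_{\varepsilon}$ in \eqref{JTD} come precisely out of this integration.

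\emph{Step 3 (the linear system and conclusion).} The pair $(C_{1}^{\alpha},C_{2}^{\alpha})$ solves the $n(n+1)\times n(n+1)$ system extracted from the fourth line of \eqref{La.002}, whose coefficient matrix and right-hand side are assembled from the $\Omega$-analogues of the blocks $\mathbb{A}^{\ast},\mathbb{B}^{\ast},\mathbb{C}^{\ast},\mathbb{D}^{\ast}$ of \eqref{WZW} and the $b_{i}^{\ast\alpha}$. Passing to the variables $(C_{1}^{\alpha}-C_{2}^{\alpha},\,C_{2}^{\alpha})$ and rescaling the rows and columns carrying the blow-up (the normal-translation ones when $n=2,3$), Step~2 shows this system converges to the reduced, invertible system with matrix $\mathbb{F}_{0}^{\ast}$ when $n=2,3$ and $\mathbb{F}_{1}^{\ast}$ when $n>3$, invertibility following from strict positivity of the underlying Dirichlet-type energy. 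Cramer's rule combined with the explicit formula \eqref{ZZWWWW} for $C_{\ast}^{\alpha}$ then gives $C_{2}^{\alpha}=C_{\ast}^{\alpha}+O(r_{\varepsilon})$. Substituting the bounds of Steps~2--3 into the displayed identity for $\mathcal{B}_{\beta}[\boldsymbol{\varphi}]-\mathcal{B}_{\beta}^{\ast}[\boldsymbol{\varphi}]$, and using that the combinations $\sum_{j}a_{1j}^{\alpha\beta}$ occurring there are uniformly bounded, completes the proof.

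\emph{Main obstacle.} The crux is the joint execution of Steps~2 and~3 without any symmetry of $\Omega$: one must compute the off-diagonal stiffness entries sharply enough to identify the limiting matrices $\mathbb{F}_{0}^{\ast},\mathbb{F}_{1}^{\ast}$ (Lemma~\ref{lemmabc}) and then resolve the singular perturbation of the linear system so that the unbounded entries do not destroy $C_{2}^{\alpha}\to C_{\ast}^{\alpha}$; propagating the error bookkeeping through Cramer's rule is what forces the explicit rate $r_{\varepsilon}$ of \eqref{JTD} rather than a merely qualitative statement, and handling $n>3$ — where $C_{1}^{\alpha}-C_{2}^{\alpha}$ is only $O(1)$, so that the ``$C_{1}^{\alpha}\approx C_{2}^{\alpha}$'' reduction of \cite{LX2022} is unavailable — is the genuinely new point.
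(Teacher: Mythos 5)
Your proposal does not address the statement at all. The statement to be proved is Lemma~\ref{lem002m}, a local $L^2$-estimate for the pressure of a Stokes system: given $\nabla\cdot\sigma[\mathbf{w},q]=\mathbf{g}$ on a bounded Lipschitz domain $\Omega$, one must show $\|q-q_{\Omega}\|_{L^{2}(\Omega)}\le C(\|\nabla\mathbf{w}\|_{L^{2}(\Omega)}+\|\mathbf{g}\|_{L^{2}(\Omega)})$, together with the scale-invariant version on balls. What you have written instead is an outline of the proof of the paper's main result, Theorem~\ref{JGR}, on the convergence of the blow-up factors $\mathcal{B}_{\beta}[\boldsymbol{\varphi}]\to\mathcal{B}_{\beta}^{\ast}[\boldsymbol{\varphi}]$: energy representation of $\mathcal{B}_{\beta}$, asymptotics of the stiffness matrix $(a_{ij}^{\alpha\beta})$, Cramer's rule for $C_{2}^{\alpha}\to C_{\ast}^{\alpha}$, etc. None of that machinery is relevant to Lemma~\ref{lem002m}; conversely, Lemma~\ref{lem002m} is one of the elementary tools used much later in the iteration of Proposition~\ref{thm86}, and its proof must be self-contained and purely functional-analytic.

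The actual argument for Lemma~\ref{lem002m} is a standard application of Lemma~\ref{lem001m} (the Bogovski\u{\i}-type divergence solvability): since $\int_{\Omega}(q-q_{\Omega})=0$, there is $\boldsymbol{\phi}\in H^{1}_{0}(\Omega;\mathbb{R}^{n})$ with $\nabla\cdot\boldsymbol{\phi}=q-q_{\Omega}$ and $\|\boldsymbol{\phi}\|_{H^{1}_{0}(\Omega)}\le C\|q-q_{\Omega}\|_{L^{2}(\Omega)}$. Testing the Stokes equation against $\boldsymbol{\phi}$ and integrating by parts,
\begin{align*}
\|q-q_{\Omega}\|_{L^{2}(\Omega)}^{2}
&=\int_{\Omega}q\,\nabla\cdot\boldsymbol{\phi}
=\int_{\Omega}\big(2\mu\,e(\mathbf{w}),e(\boldsymbol{\phi})\big)+\int_{\Omega}\mathbf{g}\cdot\boldsymbol{\phi}\\
&\le 2\mu\|\nabla\mathbf{w}\|_{L^{2}(\Omega)}\|\nabla\boldsymbol{\phi}\|_{L^{2}(\Omega)}
+\|\mathbf{g}\|_{L^{2}(\Omega)}\|\boldsymbol{\phi}\|_{L^{2}(\Omega)}
\le C\big(\|\nabla\mathbf{w}\|_{L^{2}(\Omega)}+\|\mathbf{g}\|_{L^{2}(\Omega)}\big)\|q-q_{\Omega}\|_{L^{2}(\Omega)},
\end{align*}
and dividing gives \eqref{PAZ002}. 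The ball estimate \eqref{SPHE002} then follows by rescaling $x\mapsto rx$, under which $\mathbf{w},q,\mathbf{g}$ scale so that the constant becomes independent of $r$ (this is exactly what the sharper \eqref{SPHE001} in Lemma~\ref{lem001m} is for). Your Steps~1--3 belong to a different proof and should be discarded here; the correct route is the short duality argument above.
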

We here would like to point out that \eqref{SPHE001} and \eqref{SPHE002} can be directly deduced from \eqref{PAZ001} and \eqref{PAZ002} by using a rescaling argument, see Corollary 3.3 in \cite{LX2022} for more details.

\begin{proof}[Proof of Proposition \ref{thm86}]
Without impeding the generality, consider the case of $i=1$. The case when $i=2$ is similar and thus omitted. From \eqref{zzwz002} and \eqref{PMAIN002}--\eqref{DNZ002}, a direct calculation yields that
\begin{align}\label{DM001}
|\nabla\cdot\sigma[\bar{\mathbf{u}}_{1}^{\alpha},\bar{p}_{1}^{\alpha}]|\leq&\sum^{n}_{j=1}|\mu\partial_{nn}(\bar{\mathbf{u}}_{1}^{\alpha})^{(j)}-\partial_{j}\bar{p}_{1}^{\alpha}|+|\mu\Delta_{x'}\bar{\mathbf{u}}_{1}^{\alpha}|\notag\\
\leq&C
\begin{cases}
\delta^{-1},&\alpha=1,...,n-1,\\
\delta^{-3/2},&\alpha=n,\\
\delta^{-1},&\alpha=n+1,...,2n-1,\\
\delta^{-1/2},&\alpha=2n,...,\frac{n(n+1)}{2},\,n\geq3.
\end{cases}
\end{align}

For $x\in\Omega$, let
\begin{equation*}
\mathbf{w}_{1}^{\alpha}:=\mathbf{u}_{1}^{\alpha}-\bar{\mathbf{u}}_{1}^{\alpha},\quad \alpha=1,2,...,\frac{n(n+1)}{2}.
\end{equation*}
Then $\mathbf{w}_{1}^{\alpha}$ is the solution of
\begin{align}\label{RNZ}
\begin{cases}
\nabla\cdot\sigma[\mathbf{w}_{1}^{\alpha},q_{1}^{\alpha}]=-\nabla\cdot\sigma[\bar{\mathbf{u}}_{1}^{\alpha},\bar{p}_{1}^{\alpha}],&
\hbox{in}\  \Omega,  \\
\nabla\cdot\mathbf{w}_{1}^{\alpha}=0,&\mathrm{in}\;\Omega_{2R_{0}},\\
\nabla\cdot\mathbf{w}_{1}^{\alpha}=-\nabla\cdot\bar{\mathbf{u}}_{1}^{\alpha},&\mathrm{in}\;\Omega\setminus\Omega_{2R_{0}},\\
\mathbf{w}_{1}^{\alpha}=0, \quad&\hbox{on} \ \partial\Omega.
\end{cases}
\end{align}

\noindent{\bf Step 1.}
For $\alpha=1,2,...,\frac{n(n+1)}{2}$, let $(\mathbf{w}_{1}^{\alpha},q_{1}^{\alpha})$ be the solution of \eqref{RNZ}. Then
\begin{align}\label{YGZA001}
\int_{\Omega}|\nabla\mathbf{w}_{1}^{\alpha}|^2dx\leq C,\quad \alpha=1,2,...,\frac{n(n+1)}{2}.
\end{align}

From \eqref{RNZ}, we know that $(\mathbf{w}_{1}^{\alpha},q_{1}^{\alpha})$ also verifies
\begin{align}\label{SU001}
\nabla\cdot\sigma[\mathbf{w}_{1}^{\alpha},q_{1}^{\alpha}-(q_{1}^{\alpha})_{_{R_{0},\text{out}}}]=-\nabla\cdot\sigma[\bar{\mathbf{u}}_{1}^{\alpha},\bar{p}_{1}^{\alpha}],
\end{align}
where $(q_{1}^{\alpha})_{_{R_{0},\text{out}}}:=\frac{1}{|\Omega\setminus\Omega_{R_{0}}|}\int_{\Omega\setminus\Omega_{R_{0}}}q_{1}^{\alpha}dx.$ Taking the test function $\mathbf{w}_{1}^{\alpha}$ for equation \eqref{SU001}, it follows from integration by parts that
\begin{align}\label{RIGH001}
\mu\int_{\Omega}|\nabla\mathbf{w}_{1}^{\alpha}|^{2}=-\int_{\Omega\setminus\Omega_{R_{0}}}(q_{1}^{\alpha}-(q_{1}^{\alpha})_{_{R_{0},\text{out}}})\nabla\cdot\bar{\mathbf{u}}_{1}^{\alpha}+\int_{\Omega}(\nabla\cdot\sigma[\bar{\mathbf{u}}_{1}^{\alpha},\bar{p}_{1}^{\alpha}])\cdot\mathbf{w}^{\alpha}_{1}.
\end{align}
Utilizing H\"{o}lder's inequality and applying Lemma \ref{lem002m} to \eqref{PAZ002} in $\Omega\setminus\Omega_{R_{0}}$, it follows from the definitions of $\bar{\mathbf{u}}_{1}^{\alpha}$ and $\bar{p}_{1}^{\alpha}$ that
\begin{align*}
&\left|\int_{\Omega\setminus\Omega_{R_{0}}}(q_{1}^{\alpha}-(q_{1}^{\alpha})_{_{R_{0},\text{out}}})\nabla\cdot\bar{\mathbf{u}}_{1}^{\alpha}\right|\notag\\
&\leq C\|q_{1}^{\alpha}-(q_{1}^{\alpha})_{_{R_{0},\text{out}}}\|_{L^{2}(\Omega\setminus\Omega_{R_{0}})}\leq C\|\nabla\mathbf{w}_{1}^{\alpha}\|_{L^{2}(\Omega\setminus\Omega_{R_{0}})}+C.
\end{align*}

For the second term in the right hand side of \eqref{RIGH001}, we split it as follows:
\begin{align*}
\mathrm{I}^{\alpha}:=\int_{\Omega\setminus\Omega_{R_{0}}}(\nabla\cdot\sigma[\bar{\mathbf{u}}_{1}^{\alpha},\bar{p}_{1}^{\alpha}])\cdot\mathbf{w}^{\alpha}_{1},\;\,\mathrm{II}^{\alpha}:=\int_{\Omega_{R_{0}}}(\nabla\cdot\sigma[\bar{\mathbf{u}}_{1}^{\alpha},\bar{p}_{1}^{\alpha}])\cdot\mathbf{w}^{\alpha}_{1}.
\end{align*}
From Poincar\'{e} inequality, we have
\begin{align}\label{ZAGDQ001}
|\mathrm{I}^{\alpha}|\leq C\|\nabla\mathbf{w}^{\alpha}_{1}\|_{L^{2}(\Omega\setminus\Omega_{R_{0}})}.
\end{align}
With regard to $\mathrm{II}^{\alpha}$, we have
\begin{align}\label{NAGZ001}
\mathrm{II}^{\alpha}=&\int_{\Omega_{R_{0}}}(\mu\Delta\bar{\mathbf{u}}_{1}^{\alpha}-\nabla\bar{p}_{1}^{\alpha})\cdot\mathbf{w}_{1}^{\alpha}\notag\\
=&\int_{\Omega_{R_{0}}}\mu(\Delta_{x'}\bar{\mathbf{u}}_{1}^{\alpha})\cdot\mathbf{w}_{1}^{\alpha}+\int_{\Omega_{R_{0}}}(\mu\partial_{nn}\bar{\mathbf{u}}_{1}^{\alpha}-\nabla\bar{p}_{1}^{\alpha})\cdot\mathbf{w}_{1}^{\alpha}\notag\\
=&:\mathrm{II}_{1}^{\alpha}+\mathrm{II}_{2}^{\alpha}.
\end{align}
For $\mathrm{II}_{1}^{\alpha}$, integrating by parts, we have from the Sobolev trace embedding theorem and H\"{o}lder's inequality that

$(i)$ for $\alpha=1,...,n-1$ and $\alpha=2n,...,\frac{n(n+1)}{2},\,n\geq3$,
\begin{align}\label{MNEZ001}
|\mathrm{II}_{1}^{\alpha}|\leq&\left|\mu\int_{\Omega_{R_{0}}}\nabla_{x'}\mathbf{w}_{1}^{\alpha}\nabla_{x'}\bar{\mathbf{u}}_{1}^{\alpha}\right|+\int\limits_{\scriptstyle |x'|={R_{0}},\atop\scriptstyle
-\varepsilon/2-h(x')<x_{n}<\varepsilon/2+h(x')\hfill}C|\mathbf{w}_{1}^{\alpha}|\notag\\
\leq&C\int_{\Omega_{R_{0}}}|\nabla_{x'}\mathbf{w}_{1}^{\alpha}|\delta^{-1/2}+C\|\nabla\mathbf{w}_{1}^{\alpha}\|_{L^{2}(\Omega\setminus\Omega_{R_{0}})}\leq C\|\nabla\mathbf{w}_{1}^{\alpha}\|_{L^{2}(\Omega)};
\end{align}

$(ii)$ for $\alpha=n,n+1,...,2n-1$, we have
\begin{align}\label{EAZ001}
|\mathrm{II}_{1}^{\alpha}|\leq&\left|\mu\int_{\Omega_{R_{0}}}\nabla_{x'}\mathbf{w}_{1}^{\alpha}\nabla_{x'}\bar{\mathbf{u}}_{1}^{\alpha}\right|+C\|\nabla\mathbf{w}_{1}^{\alpha}\|_{L^{2}(\Omega\setminus\Omega_{R_{0}})}.
\end{align}
Observe that for $i,j=1,...,n-1$, if $\alpha=n,$
\begin{align}\label{DMAZ001}
\Big(\mathfrak{G}^{2}-\frac{1}{4}\Big)\partial_{j}\boldsymbol{\mathcal{F}}_{n}^{i}=&
\begin{cases}
a_{1}x_{i}\delta^{-1}\partial_{j}\delta(\frac{1}{4}\partial_{n}\mathfrak{G}-\frac{1}{3}\partial_{n}\mathfrak{G}^{3}),&i\neq j,\\
a_{1}(x_{i}\delta^{-1}\partial_{i}\delta-1)(\frac{1}{4}\partial_{n}\mathfrak{G}-\frac{1}{3}\partial_{n}\mathfrak{G}^{3}),&i=j,
\end{cases}
\end{align}
and, if $\alpha=n+1,...,2n-1$,
\begin{align}\label{DMAZ002}
&\Big(\mathfrak{G}^{2}-\frac{1}{4}\Big)\partial_{j}\boldsymbol{\mathcal{F}}_{\alpha}^{i}\notag\\
&=
\begin{cases}
b_{1}(x_{i}x_{\alpha-n}\delta^{-1}\partial_{j}\delta-\partial_{j}(x_{i}x_{\alpha-n}))(\frac{1}{4}\partial_{n}\mathfrak{G}-\frac{1}{3}\partial_{n}\mathfrak{G}^{3}),&i\neq j,\\
b_{1}(x^{2}_{\alpha-n}\delta^{-1}\partial_{j}\delta-\partial_{j}x^{2}_{\alpha-n})(\frac{1}{4}\partial_{n}\mathfrak{G}-\frac{1}{3}\partial_{n}\mathfrak{G}^{3})+b_{3}\partial_{j}\delta \mathfrak{G}^{2}(\frac{1}{4}-\mathfrak{G}^{2}),&i=j.
\end{cases}
\end{align}
For $i,j=1,...,n-1$, denote
\begin{align}\label{DMAZ003}
\mathcal{A}_{ij}^{n}=&
\begin{cases}
a_{1}\mathfrak{G}(\frac{1}{4}-\frac{1}{3}\mathfrak{G}^{2})x_{i}\partial_{j}(\delta^{-1}\partial_{j}\delta)+a_{1}(\frac{1}{4}-\mathfrak{G}^{2})\partial_{j}\mathfrak{G}x_{i}\delta^{-1}\partial_{j}\delta,&i\neq j,\\
a_{1}\mathfrak{G}(\frac{1}{4}-\frac{1}{3}\mathfrak{G}^{2})\partial_{i}(x_{i}\delta^{-1}\partial_{i}\delta)+a_{1}(\frac{1}{4}-\mathfrak{G}^{2})\partial_{i}\mathfrak{G}(x_{i}\delta^{-1}\partial_{i}\delta-1),&i=j,
\end{cases}
\end{align}
and
\begin{align}\label{DMAZ005}
\mathcal{A}_{ij}^{\alpha}=&
b_{1}\mathfrak{G}(\frac{1}{4}-\frac{1}{3}\mathfrak{G}^{2})\partial_{j}(x_{i}x_{\alpha-n}\delta^{-1}\partial_{j}\delta-\partial_{j}(x_{i}x_{\alpha-n}))\notag\\
&+b_{1}(\frac{1}{4}-\mathfrak{G}^{2})(x_{i}x_{\alpha-n}\delta^{-1}\partial_{j}\delta-\partial_{j}(x_{i}x_{\alpha-n})),\quad\alpha=n+1,...,2n-1.
\end{align}
Then combining \eqref{DMAZ001}--\eqref{DMAZ005}, we deduce from integration by parts, the Sobolev trace embedding theorem and H\"{o}lder's inequality again that
\begin{align*}
&\left|\mu\int_{\Omega_{R_{0}}}\nabla_{x'}\mathbf{w}_{1}^{\alpha}\nabla_{x'}\bar{\mathbf{u}}_{1}^{\alpha}\right|\notag\\
&\leq\left|\mu\int_{\Omega_{R_{0}}}\nabla_{x'}\mathbf{w}_{1}^{\alpha}\nabla_{x'}((\mathfrak{G}^{2}-1/4)\boldsymbol{\mathcal{F}}_{\alpha})\right|+\left|\mu\int_{\Omega_{R_{0}}}\nabla_{x'}\mathbf{w}_{1}^{\alpha}\nabla_{x'}(\boldsymbol{\psi}_{\alpha}(\mathfrak{G}+1/2))\right|\notag\\
&\leq\left|\mu\int_{\Omega_{R_{0}}}(\mathfrak{G}^{2}-1/4)\nabla_{x'}\mathbf{w}_{1}^{\alpha}\nabla_{x'}\boldsymbol{\mathcal{F}}_{\alpha}\right|+C\int_{\Omega_{R_{0}}}|\nabla_{x'}\mathbf{w}_{1}^{\alpha}|\delta^{-1/2}\\
&\leq\left|\mu\sum^{n}_{i=1}\sum^{n-1}_{j=1}\int_{\Omega_{R_{0}}}\partial_{j}(\mathbf{w}_{1}^{\alpha})^{(i)}\partial_{j}\boldsymbol{\mathcal{F}}^{i}_{\alpha}\right|+C\|\nabla_{x'}\mathbf{w}_{1}^{\alpha}\|_{L^{2}(\Omega_{R_{0}})}\\
&\leq\left|\mu\sum^{n-1}_{i,j=1}\int_{\Omega_{R_{0}}}\mathcal{A}_{ij}^{\alpha}\partial_{n}(\mathbf{w}_{1}^{\alpha})^{(i)}\right|+C\|\nabla_{x'}\mathbf{w}_{1}^{\alpha}\|_{L^{2}(\Omega_{R_{0}})}\leq C\|\nabla\mathbf{w}_{1}^{\alpha}\|_{L^{2}(\Omega_{R_{0}})}.
\end{align*}
Substituting this into \eqref{EAZ001}, we obtain
\begin{align}\label{KWZ001}
|\mathrm{II}_{1}^{\alpha}|\leq C\|\nabla\mathbf{w}_{1}^{\alpha}\|_{L^{2}(\Omega)},\quad\alpha=n,n+1,...,2n-1.
\end{align}

Denote
\begin{align*}
\mathcal{B}_{j}^{\alpha}=&
\begin{cases}
\mu(\frac{1}{2}\partial_{\alpha j}\delta-\delta^{-1}\partial_{j}\delta\partial_{\alpha}\delta)\mathfrak{G},\quad\alpha,j=1,...,n-1,\\
\mu a_{1}\delta^{-1}[\partial_{j}(x'\cdot\nabla_{x'}\delta)-\delta^{-1}\partial_{j}\delta(x'\cdot\nabla_{x'}\delta)]\mathfrak{G}^{3}\\
-3\mu\delta^{-1}\partial_{j}\delta(a_{1}\delta^{-1}x'\cdot\nabla_{x'}\delta+a_{2})\mathfrak{G},\quad\alpha=n,\,j=1,...,n-1,\\
\mu b_{3}\mathfrak{G}(4\mathfrak{G}^{2}-\frac{1}{2})+3\mu x_{\alpha-n}\delta^{-1}\partial_{j}\delta(b_{1}\delta^{-1}x'\cdot\nabla_{x'}\delta+b_{4})\mathfrak{G}^{3}\\
+\mu\partial_{j}[x_{\alpha-n}(b_{1}\delta^{-1}x'\cdot\nabla_{x'}\delta+b_{4})]\mathfrak{G}^{3},\quad\alpha=n+1,...,2n-1,\,j=1,...,n-1,\\
\mu b_{5}x_{\alpha-n}\mathfrak{G}^{2}(5\mathfrak{G}^{2}-12),\quad\alpha=n+1,...,2n-1,\,j=n,\\
0,\quad\alpha=1,2,...,n,\,j=n,\;\mathrm{or}\;\alpha=2n,...,\frac{n(n+1)}{2},\,n\geq3,\,j=1,2,...,n.
\end{cases}
\end{align*}
It then follows from integration by parts and H\"{o}lder's inequality that
\begin{align*}
|\mathrm{II}^{\alpha}_{2}|=&\left|\sum^{n}_{j=1}\int_{\Omega_{R_{0}}}(\mu\partial_{nn}(\bar{\mathbf{u}}_{1}^{\alpha})^{(j)}-\partial_{j}\bar{p}_{1}^{\alpha})(\mathbf{w}_{1}^{\alpha})^{(j)}\right|\notag\\
=&\left|\sum^{n}_{j=1}\int_{\Omega_{R_{0}}}\mathcal{B}_{j}^{\alpha}\partial_{n}(\mathbf{w}_{1}^{\alpha})^{(j)}\right|\leq C\|\nabla\mathbf{w}_{1}^{\alpha}\|_{L^{2}(\Omega)}.
\end{align*}
This, together with \eqref{RIGH001}--\eqref{MNEZ001} and \eqref{KWZ001}, gives that \eqref{YGZA001} holds.

\noindent{\bf Step 2.}
Claim that for $\alpha=1,2,...,\frac{n(n+1)}{2}$,
\begin{align}\label{HN001}
 \int_{\Omega_\delta(z')}|\nabla\mathbf{w}_{1}^{\alpha}|^2dx\leq& C\delta^{n}\begin{cases}
1,&\alpha=1,...,n-1,\\
\delta^{-1},&\alpha=n,\\
1,&\alpha=n+1,...,2n-1,\\
\delta,&\alpha=2n,...,\frac{n(n+1)}{2},\,n\geq3.
\end{cases}
\end{align}

For $|z'|\leq R_{0}$, $\delta\leq t<s\leq\vartheta(\kappa)\delta^{1/2}$, $\vartheta(\kappa)=\frac{1}{8\sqrt{2\kappa}}$, we deduce
\begin{align}\label{AZQY001}
|\delta(x')-\delta(z')|=&2\kappa||x'|^{2}-|z'|^{2}|\leq 4\kappa|x'_{\theta}||x'-z'|\leq4\kappa s(s+|z'|)\leq\frac{\delta(z')}{2},
\end{align}
where $x_{\theta}'$ is some point between $x_{0}'$ and $x'$. This reads that
\begin{align}\label{QWN001}
\frac{1}{2}\delta(z')\leq\delta(x')\leq\frac{3}{2}\delta(z'),\quad\mathrm{in}\;\Omega_{s}(z').
\end{align}
Pick a smooth cutoff function $\eta\in C^{2}(\Omega_{2R_{0}})$ such that $\eta(x')=1$ if $|x'-z'|<t$, $\eta(x')=0$ if $|x'-z'|>s$, $0\leq\eta(x')\leq1$ if $t\leq|x'-z'|\leq s$, and $|\nabla_{x'}\eta(x')|\leq\frac{2}{s-t}$. In light of \eqref{RNZ}, we see that $(\mathbf{w}_{1}^{\alpha},q_{1}^{\alpha})$ also satisfies
\begin{align}\label{SOUL001}
\nabla\cdot\sigma[\mathbf{w}_{1}^{\alpha},q_{1}^{\alpha}-(q_{1}^{\alpha})_{_{s;z'}}]=-\nabla\cdot\sigma[\bar{\mathbf{u}}_{1}^{\alpha},\bar{p}_{1}^{\alpha}],\quad\mathrm{in}\;\Omega_{s}(z'),
\end{align}
where $(q_{1}^{\alpha})_{_{s;z'}}:=\frac{1}{|\Omega_{s}(z')|}\int_{\Omega_{s}(z')}q_{1}^{\alpha}dx.$ Multiplying \eqref{SOUL001} by $\mathbf{w}_{1}^{\alpha}\eta^{2}$, we have from integration by parts that
\begin{align}\label{HN002}
\mu\int_{\Omega_{s}(z')}|\nabla\mathbf{w}_{1}^{\alpha}|^{2}\eta^{2}=&-\mu\int_{\Omega_{s}(z')}(\mathbf{w}_{1}^{\alpha}\nabla\mathbf{w}_{1}^{\alpha})\cdot\eta^{2}+\int_{\Omega_{s}(z')}(\nabla\cdot\sigma[\bar{\mathbf{u}}_{1}^{\alpha},\bar{p}_{1}^{\alpha}])\cdot\mathbf{w}_{1}^{\alpha}\eta^{2}\notag\\
&+\int_{\Omega_{s}(z')}(q_{1}^{\alpha}-(q_{1}^{\alpha})_{_{s;z'}})\nabla\cdot(\mathbf{w}_{1}^{\alpha}\eta^{2}).
\end{align}
Since $\mathbf{w}_{1}^{\alpha}=0$ on $\Gamma_{R_{0}}^{-}$, it follows from \eqref{QWN001}, Young's inequality and Poincar\'{e} inequality that
\begin{align}\label{DAMZ990}
\int_{\Omega_{s}(z')}|\mathbf{w}_{1}^{\alpha}\nabla\mathbf{w}_{1}^{\alpha}||\nabla\eta^{2}|\leq&\frac{1}{4}\int_{\Omega_{s}(z')}|\nabla\mathbf{w}_{1}^{\alpha}|^{2}+\frac{C}{(s-t)^{2}}\int_{\Omega_{s}(z')}|\mathbf{w}_{1}^{\alpha}|^{2}\notag\\
\leq&\frac{1}{4}\int_{\Omega_{s}(z')}|\nabla\mathbf{w}_{1}^{\alpha}|^{2}+\left(\frac{C\delta}{s-t}\right)^{2}\int_{\Omega_{s}(z')}|\nabla\mathbf{w}_{1}^{\alpha}|^{2},
\end{align}
and
\begin{align}\label{DAMZ991}
&\frac{1}{\mu}\left|\int_{\Omega_{s}(z')}(\nabla\cdot\sigma[\bar{\mathbf{u}}_{1}^{\alpha},\bar{p}_{1}^{\alpha}])\cdot\mathbf{w}_{1}^{\alpha}\eta^{2}\right|\notag\\
&\leq\left(\frac{C\delta}{s-t}\right)^{2}\int_{\Omega_{s}(z')}|\nabla\mathbf{w}_{1}^{\alpha}|^{2}+C(s-t)^{2}\int_{\Omega_{s}(z')}\left|\nabla\cdot\sigma[\bar{\mathbf{u}}_{1}^{\alpha},\bar{p}_{1}^{\alpha}]\right|^{2}.
\end{align}
For $\delta(z')\leq s\leq c_{0}\delta(z')$ with any fixed $c_{0}\geq1$, using the following change of variables
\begin{align}\label{change001}
\begin{cases}
x'-z'=sy',\\
x_{n}=sy_{n},
\end{cases}
\end{align}
we rescale $\Omega_{s}(z')$ into $Q_{1}$, where, for $0<r\leq 1$,
\begin{align*}
Q_{r}=\left\{y\in\mathbb{R}^{n}\,\Big|\,|y_{n}|<s^{-1}|\varepsilon/2+h(sy'+z')|,\;|y'|<r\right\}.
\end{align*}
Its top and bottom boundaries can be, respectively, written as
\begin{align*}
\widehat{\Gamma}^{+}_{r}=&\left\{y\in\mathbb{R}^{n}\,\Big|\,y_{n}=s^{-1}(\varepsilon/2+h(sy'+z')),\;|y'|<r\right\},
\end{align*}
and
\begin{align*}
\widehat{\Gamma}^{-}_{r}=&\left\{y\in\mathbb{R}^{n}\,\Big|\,y_{n}=-s^{-1}(\varepsilon/2+h(sy'+z')),\;|y'|<r\right\}.
\end{align*}
Let
\begin{align*}
\mathbf{W}_{1}^{\alpha}(y',y_n):=&\mathbf{w}_{1}^{\alpha}(s y'+z',s y_n),\quad Q_{1}^{\alpha}(y',y_n):=sq_{1}^{\alpha}(s y'+z',s y_n),\\
\bar{\mathbf{U}}_{1}^{\alpha}(y',y_n):=&\bar{\mathbf{u}}_{1}^{\alpha}(s y'+z',s y_n),\quad \bar{P}_{1}^{\alpha}(y',y_n):=s\bar{p}_{1}^{\alpha}(s y'+z',s y_n).
\end{align*}
Then using \eqref{change001}, $(\mathbf{W}_{1}^{\alpha}(y),Q_{1}^{\alpha}(y))$ solves
\begin{align}\label{MAZE001}
\begin{cases}
\nabla\cdot\sigma[\mathbf{W}_{1}^{\alpha},Q_{1}^{\alpha}-(Q_{1}^{\alpha})_{_{Q_{1}}}]=-\nabla\cdot\sigma[\bar{\mathbf{U}}_{1}^{\alpha},\bar{P}_{1}^{\alpha}],&
\hbox{in}\  Q_{1},  \\
\nabla\cdot\mathbf{W}_{1}^{\alpha}=0,&\hbox{in}\  Q_{1},\\
\mathbf{W}_{1}^{\alpha}=0, &\hbox{on} \ \widehat{\Gamma}^{\pm}_{1},
\end{cases}
\end{align}
where $(Q_{1}^{\alpha})_{_{Q_{1}}}:=\frac{1}{|Q_{1}|}\int_{Q_{1}}Q_{1}^{\alpha}$. Similar to \eqref{SPHE002}, applying Lemma \ref{lem002m} to equation \eqref{MAZE001} and rescaling back to original region $\Omega_{s}(z')$, we derive
\begin{align}\label{AWNZ001}
\int_{\Omega_{s}(z')}|q_{1}^{\alpha}-(q_{1}^{\alpha})_{_{s;z'}}|^{2}\leq C_{0}\int_{\Omega_{s}(z')}|\nabla\mathbf{w}_{1}^{\alpha}|^{2}+C\delta^{2}\int_{\Omega_{s}(z')}\left|\nabla\cdot\sigma[\bar{\mathbf{u}}_{1}^{\alpha},\bar{p}_{1}^{\alpha}]\right|^{2}.
\end{align}
Since $\nabla\cdot\mathbf{w}_{1}^{\alpha}=0$ in $\Omega_{2R_{0}}$, then it follows from \eqref{AWNZ001}, Young's inequality and Poincar\'{e} inequality that
\begin{align*}
&\frac{1}{\mu}\left|\int_{\Omega_{s}(z')}(q_{1}^{\alpha}-(q_{1}^{\alpha})_{_{s;z'}})\nabla\cdot(\mathbf{w}_{1}^{\alpha}\eta^{2})\right|\notag\\
&\leq\frac{1}{4C_{0}}\int_{\Omega_{s}(z')}|q_{1}^{\alpha}-(q_{1}^{\alpha})_{_{s;z'}}|^{2}\eta^{2}+\frac{C}{(s-t)^{2}}\int_{\Omega_{s}(z')}|\nabla\mathbf{w}_{1}^{\alpha}|^{2}\notag\\
&\leq\frac{1}{4}\int_{\Omega_{s}(z')}|\nabla\mathbf{w}_{1}^{\alpha}|^{2}+\left(\frac{C\delta}{s-t}\right)^{2}\int_{\Omega_{s}(z')}|\nabla\mathbf{w}_{1}^{\alpha}|^{2}+C\delta^{2}\int_{\Omega_{s}(z')}\left|\nabla\cdot\sigma[\bar{\mathbf{u}}_{1}^{\alpha},\bar{p}_{1}^{\alpha}]\right|^{2},
\end{align*}
which, in combination with \eqref{HN002}--\eqref{DAMZ991}, reads that
\begin{align}\label{HANQ001}
\int_{\Omega_{t}(z')}|\nabla\mathbf{w}_{1}^{\alpha}|^{2}\leq&\frac{1}{2}\int_{\Omega_{s}(z')}|\nabla\mathbf{w}_{1}^{\alpha}|^{2}+\left(\frac{C\delta}{s-t}\right)^{2}\int_{\Omega_{s}(z')}|\nabla\mathbf{w}_{1}^{\alpha}|^{2}\notag\\
&+C((s-t)^{2}+\delta^{2})\int_{\Omega_{s}(z')}\left|\nabla\cdot\sigma[\bar{\mathbf{u}}_{1}^{\alpha},\bar{p}_{1}^{\alpha}]\right|^{2}.
\end{align}
Then applying Lemma 4.3 in \cite{HL2011} to \eqref{HANQ001}, we obtain
\begin{align*}
\int_{\Omega_{t}(z')}|\nabla\mathbf{w}_{1}^{\alpha}|^{2}\leq&\left(\frac{C\delta}{s-t}\right)^{2}\int_{\Omega_{s}(z')}|\nabla\mathbf{w}_{1}^{\alpha}|^{2}+C((s-t)^{2}+\delta^{2})\int_{\Omega_{s}(z')}\left|\nabla\cdot\sigma[\bar{\mathbf{u}}_{1}^{\alpha},\bar{p}_{1}^{\alpha}]\right|^{2}.
\end{align*}
Utilizing \eqref{DM001}, we have
\begin{align*}
\int_{\Omega_{t}(z')}|\nabla\mathbf{w}_{1}^{\alpha}|^{2}\leq&\left(\frac{C_{1}\delta}{s-t}\right)^{2}\int_{\Omega_{s}(z')}|\nabla\mathbf{w}_{1}^{\alpha}|^{2}\notag\\
&+C((s-t)^{2}+\delta^{2})s^{n-1}\begin{cases}
\delta^{-1},&\alpha=1,...,n-1,\\
\delta^{-2},&\alpha=n,\\
\delta^{-1},&\alpha=n+1,...,2n-1,\\
1,&\alpha=2n,...,\frac{n(n+1)}{2},\,n\geq3.
\end{cases}
\end{align*}
Denote
$$F(t):=\int_{\Omega_{t}(z')}|\nabla\mathbf{w}_{1}^{\alpha}|^{2}.$$
Take $s=t_{i+1}$, $t=t_{i}$, $t_{i}=\delta+2C_{1}i\delta,\;i=0,1,2,...,\left[\frac{\vartheta(\kappa)}{4C_{1}\delta^{1/2}}\right]+1$. Then we have
\begin{align*}
F(t_{i})\leq&\frac{1}{4}F(t_{i+1})+C(i+1)^{n+1}\delta^{n}\begin{cases}
1,&\alpha=1,...,n-1,\\
\delta^{-1},&\alpha=n,\\
1,&\alpha=n+1,...,2n-1,\\
\delta,&\alpha=2n,...,\frac{n(n+1)}{2},\,n\geq3.
\end{cases}
\end{align*}
Therefore, by $\left[\frac{\vartheta(\kappa)}{4C_{1}\delta^{1/2}}\right]+1$ iterations, we have from \eqref{YGZA001} that for a arbitrarily small $\varepsilon>0$,
\begin{align*}
F(t_{0})\leq C\delta^{n}\begin{cases}
1,&\alpha=1,...,n-1,\\
\delta^{-1},&\alpha=n,\\
1,&\alpha=n+1,...,2n-1,\\
\delta,&\alpha=2n,...,\frac{n(n+1)}{2},\,n\geq3.
\end{cases}
\end{align*}
Then \eqref{HN001} is proved.

\noindent{\bf Step 3.}
Proof of
\begin{align}\label{AQ3.052}
&\|\nabla \mathbf{w}_{1}^{\alpha}\|_{L^{\infty}(\Omega_{\delta/2}(z'))}+\|q_{1}^{\alpha}-(q_{1}^{\alpha})_{\delta;z'}\|_{L^{\infty}(\Omega_{\delta/2}(z'))}\notag\\
&\leq C\begin{cases}
1,&\alpha=1,...,n-1,\\
\delta^{-1/2},&\alpha=n,\\
1,&\alpha=n+1,...,2n-1,\\
\delta^{1/2},&\alpha=2n,...,\frac{n(n+1)}{2},\,n\geq3,
\end{cases}\quad\mathrm{for}\;|z'|\leq R_{0}.
\end{align}

Similar to \eqref{AZQY001}, we obtain that for $x\in\Omega_{\delta}(z')$,
\begin{align*}
|\delta(x')-\delta(z')|
\leq&4\kappa\delta(\delta+|z'|)\leq4\sqrt{2\kappa}\delta^{3/2},
\end{align*}
which yields that
\begin{align*}
\left|\frac{\delta(x')}{\delta(z')}-1\right|\leq8\sqrt{2}\kappa R_{0}.
\end{align*}
Due to the fact that $R$ is a small positive constant, $Q_{1}$ is of approximate unit size. Therefore, applying Lemma \ref{lem002m}, the Sobolev embedding theorem, the Poincar\'{e} inequality, a bootstrap argument and classical $W^{2, p}$ estimates (see Theorem IV.5.1 in \cite{G2011}) for Stokes flow \eqref{MAZE001}, we derive that for some $p>n$,
\begin{align*}
&\|\nabla \mathbf{W}_{1}^{\alpha}\|_{L^{\infty}(Q_{1/2})}+\|Q_{1}^{\alpha}-(Q_{1}^{\alpha})_{_{Q_{1}}}\|_{L^{\infty}(Q_{1/2})}\notag\\
&\leq C(\|\mathbf{W}_{\alpha}\|_{W^{2,p}(Q_{1/2})}+\|Q_{1}^{\alpha}-(Q_{1}^{\alpha})_{_{Q_{1}}}\|_{W^{1,p}(Q_{1/2})})\notag\\
&\leq C(\|\nabla\mathbf{W}_{1}^{\alpha}\|_{L^{2}(Q_{1})}+\|Q_{1}^{\alpha}-(Q_{1}^{\alpha})_{_{Q_{1}}}\|_{L^{2}(Q_{1})}+\|\nabla\cdot\sigma[\bar{\mathbf{U}}_{1}^{\alpha},\bar{P}_{1}^{\alpha}]\|_{L^{\infty}(Q_{1})})\notag\\
&\leq C(\|\nabla\mathbf{W}_{1}^{\alpha}\|_{L^{2}(Q_{1})}+\|\nabla\cdot\sigma[\bar{\mathbf{U}}_{1}^{\alpha},\bar{P}_{1}^{\alpha}]\|_{L^{\infty}(Q_{1})}).
\end{align*}
This yields that
\begin{align*}
&\|\nabla \mathbf{w}_{1}^{\alpha}\|_{L^{\infty}(\Omega_{\delta/2}(z'))}+\|q_{1}^{\alpha}-(q_{1}^{\alpha})_{\delta;z'}\|_{L^{\infty}(\Omega_{\delta/2}(z'))}\notag\\
&\leq C\left(\delta^{-\frac{n}{2}}\|\nabla \mathbf{w}_{1}^{\alpha}\|_{L^{2}(\Omega_{\delta}(z'))}+\delta\|\nabla\cdot\sigma[\bar{\mathbf{u}}_{1}^{\alpha},\bar{p}_{1}^{\alpha}]\|_{L^{\infty}(\Omega_{\delta}(z'))}\right).
\end{align*}
which, together with \eqref{DM001} and \eqref{HN001}, leads to that \eqref{AQ3.052} holds. The proof is complete.

\end{proof}

Applying the proof of Proposition \ref{thm86} with minor modification, we have
\begin{corollary}\label{coro00z}
Let $(\mathbf{u}_{0},p_{0}),\,(\mathbf{u}_{i}^{\alpha},p_{i}^{\alpha}),\,(\mathbf{u}_{0}^{\ast},p_{0}^{\ast})$ and $(\mathbf{u}_{i}^{\ast\alpha},p_{i}^{\ast\alpha})$, $i=1,2$, $\alpha=1,2,...,\frac{n(n+1)}{2}$ be, respectively, the solutions of \eqref{qaz001}--\eqref{qaz003az} and \eqref{ZG001}--\eqref{qaz001111}. Then for a arbitrarily small $\varepsilon>0$,
\begin{align*}
|\nabla\mathbf{u}_{0}|+\left|\nabla(\mathbf{u}_{1}^{\alpha}+\mathbf{u}_{2}^{\alpha})\right|+|p_{0}|+|p_{1}^{\alpha}+p_{2}^{\alpha}|\leq C\delta^{-\frac{n}{2}}e^{-\frac{1}{2C\delta^{1/2}}},\;\;\mathrm{in}\;\Omega_{R_{0}},
\end{align*}
and
\begin{align}\label{LFN001}
|\nabla\mathbf{u}_{0}^{\ast}|+\left|\nabla(\mathbf{u}_{1}^{\ast\alpha}+\mathbf{u}_{2}^{\ast\alpha})\right|+|p_{0}^{\ast}|+|p_{1}^{\ast\alpha}+p_{2}^{\ast\alpha}|\leq C|x'|^{-n}e^{-\frac{1}{2C|x'|}},\;\;\mathrm{in}\;\Omega_{R_{0}}^{\ast}.
\end{align}
\end{corollary}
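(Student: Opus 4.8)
The plan is to re-run the three-step argument of Proposition \ref{thm86}, but now for auxiliary functions that are \emph{rigid displacements} with vanishing auxiliary pressure; this is exactly what forces the right-hand side of the perturbed Stokes system to vanish in the neck and turns the algebraic bound of Proposition \ref{thm86} into exponential decay. For $\mathbf{u}_{0}$ (and $\mathbf{u}_{0}^{\ast}$) nothing needs to be subtracted: it already solves $\nabla\cdot\sigma[\mathbf{u}_{0},p_{0}]=0$ in $\Omega$ with zero data on $\partial D_{1}\cup\partial D_{2}$, so the ``auxiliary pair'' is simply $(\mathbf{0},0)$. For $\mathbf{u}_{1}^{\alpha}+\mathbf{u}_{2}^{\alpha}$ take the auxiliary pair $(\bar{\mathbf{u}}_{1}^{\alpha}+\bar{\mathbf{u}}_{2}^{\alpha},\,\bar p_{1}^{\alpha}+\bar p_{2}^{\alpha})$: summing \eqref{zzwz002} over $i=1,2$, the terms carrying the factor $(-1)^{i-1}$ cancel, so $\bar{\mathbf{u}}_{1}^{\alpha}+\bar{\mathbf{u}}_{2}^{\alpha}=\boldsymbol{\psi}_{\alpha}$ in $\Omega_{2R_{0}}$, and likewise $\bar p_{1}^{\alpha}+\bar p_{2}^{\alpha}=0$ there by \eqref{PMAIN002}. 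In either case the auxiliary velocity is a rigid displacement and the auxiliary pressure is zero, so $\sigma[\bar{\mathbf{u}},\bar p]\equiv0$, and in particular $\nabla\cdot\sigma[\bar{\mathbf{u}},\bar p]\equiv0$ in $\Omega_{2R_{0}}$; outside $\Omega_{R_{0}}$ one multiplies $\boldsymbol{\psi}_{\alpha}$ by a fixed cutoff to restore the boundary data on $\partial D$, which produces only a bounded source supported away from the neck. Let $\mathbf{w}$ and $q$ denote the corresponding differences; then $(\mathbf{w},q)$ solves the analogue of \eqref{RNZ} with $\mathbf{w}=0$ on $\partial\Omega$ and with source vanishing in $\Omega_{2R_{0}}$.

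Step 1 of Proposition \ref{thm86} then gives $\int_{\Omega}|\nabla\mathbf{w}|^{2}\le C$ with the obvious simplifications (the only source is the bounded, neck-free cutoff term). Step 2 is where the improvement occurs: since $\nabla\cdot\sigma[\bar{\mathbf{u}},\bar p]=0$ in the neck, the additive error term in \eqref{HANQ001}, and hence in its consequence after Lemma 4.3 in \cite{HL2011}, disappears, leaving the pure contraction
\[
\int_{\Omega_{t}(z')}|\nabla\mathbf{w}|^{2}\le\Big(\frac{C_{1}\delta(z')}{s-t}\Big)^{2}\int_{\Omega_{s}(z')}|\nabla\mathbf{w}|^{2},\qquad\delta(z')\le t<s\le\vartheta(\kappa)\delta(z')^{1/2},
\]
so that choosing $s-t=4C_{1}\delta(z')$ makes the factor at most $1/4$.

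To extract the exponential, propagate this contraction from the ``mouth'' $|x'|\sim R_{0}$ inward to a given $z'\in B'_{R_{0}}$ along a chain of shells whose thicknesses track the local aperture $\delta(\cdot)=\varepsilon+2\kappa|\cdot|^{2}$; the number of contraction steps needed is comparable to $\int_{|z'|}^{R_{0}}\delta(t)^{-1}\,dt$, which, splitting the integral at $t\sim\sqrt\varepsilon$, is comparable to $\delta(z')^{-1/2}$ in both the inner regime $|z'|\lesssim\sqrt\varepsilon$ (aperture $\sim\varepsilon$) and the outer regime $|z'|\gtrsim\sqrt\varepsilon$ (aperture $\sim\kappa|z'|^{2}$). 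Together with the Step 1 bound this yields $\int_{\Omega_{\delta(z')}(z')}|\nabla\mathbf{w}|^{2}\le C(1/4)^{c\,\delta(z')^{-1/2}}\le Ce^{-1/(C\delta(z')^{1/2})}$. Finally, Step 3 of Proposition \ref{thm86} applies unchanged — rescale $\Omega_{\delta(z')}(z')$ to unit size, invoke the $W^{2,p}$ and Sobolev estimates for the Stokes system \eqref{MAZE001} and Lemma \ref{lem002m} for the pressure — upgrading the $L^{2}$ decay to $\|\nabla\mathbf{w}\|_{L^{\infty}(\Omega_{\delta/2}(z'))}+\|q-(q)_{\delta;z'}\|_{L^{\infty}(\Omega_{\delta/2}(z'))}\le C\delta(z')^{-n/2}e^{-1/(2C\delta(z')^{1/2})}$. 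Since $e(\boldsymbol{\psi}_{\alpha})=0$ and the auxiliary pressure vanishes, this is precisely the decay of $\nabla\mathbf{u}_{0}$, of the strain of $\mathbf{u}_{1}^{\alpha}+\mathbf{u}_{2}^{\alpha}$, and of $p_{0},\,p_{1}^{\alpha}+p_{2}^{\alpha}$ (modulo the local constant $(q)_{\delta;z'}$, whose exponential smallness follows by chaining these averages along the neck, or by normalising $\int_{\Omega}p=0$). The touching estimate \eqref{LFN001} is the specialisation $\varepsilon=0$, for which $\delta(x')=2\kappa|x'|^{2}$ and the bound reads $C|x'|^{-n}e^{-1/(2C|x'|)}$.

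The main obstacle is the cross-scale bookkeeping in Step 2: one must arrange the chain of contractions so that the total number of steps from the mouth of the neck down to $z'$ is genuinely of order $\delta(z')^{-1/2}$, uniformly across the transition between the regime $|x'|\lesssim\sqrt\varepsilon$, where the aperture is essentially the constant $\varepsilon$, and the regime $|x'|\gtrsim\sqrt\varepsilon$, where it grows quadratically — and, relatedly, to promote the decay of the pressure oscillation to decay of the pressure itself.
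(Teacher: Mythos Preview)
Your proposal is correct and matches the paper's intended argument (the paper itself only says ``applying the proof of Proposition \ref{thm86} with minor modification''). You have correctly identified the key point: summing \eqref{zzwz002} and \eqref{PMAIN002} over $i=1,2$ collapses the auxiliary pair to $(\boldsymbol{\psi}_{\alpha},0)$ in $\Omega_{2R_{0}}$, so the source term $\nabla\cdot\sigma[\bar{\mathbf{u}},\bar p]$ vanishes in the neck and the iteration of Step~2 becomes a pure geometric contraction, which after $\sim\delta^{-1/2}$ steps yields the exponential factor.

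One small simplification: you describe propagating a chain of shells inward from the mouth $|x'|\sim R_{0}$ to $z'$, tracking the varying aperture along the way. In fact the paper's own Step~2 is cleaner --- it fixes the center $z'$ once and for all and just iterates the radii $t_{i}=\delta(z')+2C_{1}i\,\delta(z')$ outward from $\delta(z')$ to $\vartheta(\kappa)\delta(z')^{1/2}$, using \eqref{QWN001} to keep $\delta(x')\sim\delta(z')$ throughout; the number of steps is then immediately $\sim\delta(z')^{-1/2}$, and one terminates with the global energy bound from Step~1. This avoids the two-regime bookkeeping you flag as the main obstacle. Your version is also valid, just slightly more laborious.
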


For $i,j=1,2$ and $\alpha, \beta=1,2,...,\frac{n(n+1)}{2}$, write
\begin{align}\label{KAT001}
a_{ij}^{\alpha\beta}:=-\int_{\partial{D}_{j}}\boldsymbol{\psi}_{\beta}\cdot\sigma[\mathbf{u}_{i}^{\alpha},p_{i}^{\alpha}]\nu, \quad b_j^{\beta}:=\int_{\partial D_{j}}\psi_{\beta}\cdot\sigma[\mathbf{u}_{0},p_{0}]\nu.
\end{align}
Integrating by parts, we have
\begin{align*}
a_{ij}^{\alpha\beta}=\int_{\Omega}(2\mu e(\mathbf{u}_{i}^{\alpha}),e(\mathbf{u}_{j}^{\beta})).
\end{align*}
Substituting \eqref{Decom} into the fourth line of \eqref{La.002}, we derive
\begin{align}\label{AHNTW009}
\begin{cases}
\sum\limits_{\alpha=1}^{\frac{n(n+1)}{2}}(C_{1}^\alpha-C_{2}^{\alpha}) a_{11}^{\alpha\beta}+\sum\limits_{\alpha=1}^{\frac{n(n+1)}{2}}C_{2}^\alpha \sum\limits^{2}_{i=1}a_{i1}^{\alpha\beta}=b_1^\beta,\\
\sum\limits_{\alpha=1}^{\frac{n(n+1)}{2}}(C_{1}^\alpha-C_{2}^{\alpha}) a_{12}^{\alpha\beta}+\sum\limits_{\alpha=1}^{\frac{n(n+1)}{2}}C_{2}^\alpha \sum\limits^{2}_{i=1}a_{i2}^{\alpha\beta}=b_2^\beta.
\end{cases}
\end{align}
Adding the first line of \eqref{AHNTW009} to its second line, we have
\begin{align}\label{zzw002}
\begin{cases}
\sum\limits_{\alpha=1}^{\frac{n(n+1)}{2}}(C_{1}^\alpha-C_{2}^{\alpha}) a_{11}^{\alpha\beta}+\sum\limits_{\alpha=1}^{\frac{n(n+1)}{2}}C_{2}^\alpha \sum\limits^{2}_{i=1}a_{i1}^{\alpha\beta}=b_1^\beta,\\
\sum\limits_{\alpha=1}^{\frac{n(n+1)}{2}}(C_{1}^{\alpha}-C_{2}^{\alpha})\sum\limits^{2}_{j=1}a_{1j}^{\alpha\beta}+\sum\limits_{\alpha=1}^{\frac{n(n+1)}{2}}C_{2}^\alpha \sum\limits^{2}_{i,j=1}a_{ij}^{\alpha\beta}=\sum\limits^{2}_{i=1}b_{i}^{\beta}.
\end{cases}
\end{align}
For notational simplicity, let
\begin{align*}
&X^{1}=\big(C_{1}^1-C_{2}^{1},...,C_{1}^\frac{n(n+1)}{2}-C_{2}^{\frac{n(n+1)}{2}}\big)^{T},\quad X^{2}=\big(C_{2}^{1},...,C_{2}^{\frac{n(n+1)}{2}}\big)^{T},\\
&Y^{1}=(b_1^1,...,b_{1}^{\frac{n(n+1)}{2}})^{T},\quad Y^{2}=\bigg(\sum\limits^{2}_{i=1}b_{i}^{1},...,\sum\limits^{2}_{i=1}b_{i}^{\frac{n(n+1)}{2}}\bigg)^{T},\notag
\end{align*}
and
\begin{align}
&\mathbb{A}=(a_{11}^{\alpha\beta})_{\frac{n(n+1)}{2}\times\frac{n(n+1)}{2}},\quad \mathbb{B}=\bigg(\sum\limits^{2}_{i=1}a_{i1}^{\alpha\beta}\bigg)_{\frac{n(n+1)}{2}\times\frac{n(n+1)}{2}},\label{GGDA01}\\
&\mathbb{C}=\bigg(\sum\limits^{2}_{j=1}a_{1j}^{\alpha\beta}\bigg)_{\frac{n(n+1)}{2}\times\frac{n(n+1)}{2}},\quad \mathbb{D}=\bigg(\sum\limits^{2}_{i,j=1}a_{ij}^{\alpha\beta}\bigg)_{\frac{n(n+1)}{2}\times\frac{n(n+1)}{2}}.\label{FAMZ91}
\end{align}
Therefore, \eqref{zzw002} becomes
\begin{gather}\label{PLA001}
\begin{pmatrix} \mathbb{A}&\mathbb{B} \\  \mathbb{C}&\mathbb{D}
\end{pmatrix}
\begin{pmatrix}
X^{1}\\
X^{2}
\end{pmatrix}=
\begin{pmatrix}
Y^{1}\\
Y^{2}
\end{pmatrix}.
\end{gather}
The advantage of \eqref{zzw002} or \eqref{PLA001} lies in that it makes the singularities of the coefficient matrix in \eqref{PLA001} focus only on the first block matrix $\mathbb{A}$, but the remaining matrices $\mathbb{B},\mathbb{C},\mathbb{D}$ remain bounded. This greatly simplifies the calculations for solving $X=(X^{1},X^{2})^{T}$. Observe that $a_{ij}^{\alpha\beta}=a_{ji}^{\beta\alpha}$, which implies that $\mathbb{C}=\mathbb{B}^{T}$.

For $\alpha=1,2,...,\frac{n(n+1)}{2}$, define
\begin{align}\label{CONT001}
\mathcal{L}_{\alpha}:=&
\begin{cases}
\mu,&\alpha=1,...,n-1,\\
2\mu,&\alpha=n,n+1,...,\frac{n(n+1)}{2}.
\end{cases}
\end{align}
Denote
\begin{align}\label{rate00}
\rho_{n}(\varepsilon):=&
\begin{cases}
\varepsilon^{-1/2},&n=2,\\
|\ln\varepsilon|,&n=3,\\
1,&n>3,
\end{cases}
\end{align}
and
\begin{align}\label{rate005}
\varrho_{\alpha,n}(\varepsilon):=
\begin{cases}
\varepsilon^{\frac{n-1}{24}},&\alpha=1,...,n-1,\,n\geq2,\\
|\ln\varepsilon|,&\alpha=n,\,n=2,\\
\varepsilon^{\frac{n-2}{24}},&\alpha=n,\,n\geq3.
\end{cases}
\end{align}
Then we have
\begin{lemma}\label{lemmabc}
For a arbitrarily small $\varepsilon>0$, we have

$(\rm{i})$ for $\alpha=1,2,...,n$,
\begin{align}\label{LMC}
a_{11}^{\alpha\alpha}=&
\begin{cases}
\mathcal{L}_{\alpha}\left(\frac{\pi}{2\kappa}\right)^{\frac{n-1}{2}}\rho_{n}(\varepsilon)+\mathcal{G}_{n}^{\ast\alpha}+O(1)\max\{\varepsilon^{\frac{1}{12}},\varrho_{\alpha,n}(\varepsilon)\},&n=2,3,\\
a_{11}^{\ast\alpha\alpha}+O(1)\varepsilon^{\min\{\frac{1}{12},\frac{n-3}{24}\}},&n>3;
\end{cases}
\end{align}

$(\rm{ii})$ for $\alpha=n+1,...,\frac{n(n+1)}{2}$,
\begin{align}\label{LMC1}
a_{11}^{\alpha\alpha}=a_{11}^{\ast\alpha\alpha}+O(1)\varepsilon^{\min\{\frac{1}{12},\frac{n-1}{24}\}};
\end{align}

$(\rm{iii})$ for $\alpha,\beta=1,2,...,\frac{n(n+1)}{2}$,
\begin{align}\label{M001}
a_{11}^{12}=a_{11}^{21}=O(1)|\ln\varepsilon|,\quad n=2,
\end{align}
and
\begin{align}\label{M005}
&a_{11}^{\alpha\beta}-a_{11}^{\ast\alpha\beta}=a_{11}^{\beta\alpha}-a_{11}^{\ast\alpha\beta}\notag\\
&=O(1)
\begin{cases}
\varepsilon^{\min\{\frac{1}{12},\frac{n-2}{24}\}},&\alpha,\beta=1,2,...,n,\,n\geq3,\,\alpha<\beta,\\
\varepsilon^{\min\{\frac{1}{12},\frac{n-1}{24}\}},&\alpha=1,2,...,n,\,\beta=n+1,...,\frac{n(n+1)}{2},\,n\geq2,\\
\varepsilon^{\min\{\frac{1}{12},\frac{n}{24}\}},&\alpha,\beta=n+1,...,\frac{n(n+1)}{2},\,n\geq3,\,\alpha<\beta;
\end{cases}
\end{align}

$(\rm{iv})$ for $\alpha,\beta=1,2,...,\frac{n(n+1)}{2}$,
\begin{align}\label{AZQ001}
\sum\limits^{2}_{i=1}a_{i1}^{\alpha\beta}=&\sum\limits^{2}_{i=1}a_{i1}^{\ast\alpha\beta}+O(\varepsilon^{\frac{1}{4}}),\quad\sum\limits^{2}_{j=1}a_{1j}^{\alpha\beta}=\sum\limits^{2}_{j=1}a_{1j}^{*\alpha\beta}+O(\varepsilon^{\frac{1}{4}}),
\end{align}
and thus
\begin{align*}
\sum\limits^{2}_{i,j=1}a_{ij}^{\alpha\beta}=\sum\limits^{2}_{i,j=1}a_{ij}^{\ast\alpha\beta}+O(\varepsilon^{\frac{1}{4}}).
\end{align*}
\end{lemma}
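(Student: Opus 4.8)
The proof rests on the energy identities $a_{11}^{\alpha\beta}=\int_{\Omega}(2\mu e(\mathbf{u}_{1}^{\alpha}),e(\mathbf{u}_{1}^{\beta}))$ and $a_{11}^{\ast\alpha\beta}=\int_{\Omega^{\ast}}(2\mu e(\mathbf{u}_{1}^{\ast\alpha}),e(\mathbf{u}_{1}^{\ast\beta}))$, together with Proposition \ref{thm86} and Corollary \ref{coro00z}. The strategy is to isolate the singular behaviour of $a_{11}^{\alpha\beta}$ coming from the neck and to show that the rest matches the touching-configuration quantity $a_{11}^{\ast\alpha\beta}$ (or, for $n=2,3$, its regular part $\mathcal{G}_{n}^{\ast\alpha}$) up to a power of $\varepsilon$. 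I would split $\int_{\Omega}=\int_{\Omega_{R_{0}}}+\int_{\Omega\setminus\Omega_{R_{0}}}$ (and likewise for $\Omega^{\ast}$). On the narrow part, where $\mathbf{u}_{1}^{\gamma}$, $\bar{\mathbf{u}}_{1}^{\gamma}$ and their touching counterparts $\mathbf{u}_{1}^{\ast\gamma}$, $\bar{\mathbf{u}}_{1}^{\gamma}|_{\varepsilon=0}$ are all divergence free, write $\mathbf{u}_{1}^{\gamma}=\bar{\mathbf{u}}_{1}^{\gamma}+\mathbf{w}_{1}^{\gamma}$, expand the bilinear form, and control every remainder carrying a factor $e(\mathbf{w}_{1}^{\gamma})$: covering $\Omega_{R_{0}}$ by the subregions $\Omega_{\delta}(z')$ used in the proof of Proposition \ref{thm86} and applying the refined local energy bound \eqref{HN001} and the interior bound \eqref{AQ3.052} on $\nabla\mathbf{w}_{1}^{\gamma}$ against the explicit pointwise size of $\nabla\bar{\mathbf{u}}_{1}^{\gamma}$, then splitting the neck once more at an intermediate radius $|x'|\sim\varepsilon^{\theta}$ and optimising in $\theta$, is what produces the decaying error exponents $\varepsilon^{1/12}$, $\varrho_{\alpha,n}(\varepsilon)$ and the $\varepsilon$-powers with denominator $24$ in \eqref{LMC}--\eqref{M005}; the identity $\nabla\cdot\sigma[\mathbf{u}_{1}^{\gamma},p_{1}^{\gamma}]=0$ is used repeatedly to integrate by parts and trade the neck degeneracy for a lateral-boundary term over $\{|x'|=R_{0}\}$ which is then compared with the touching configuration.

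The central step is the explicit evaluation of $\int_{\Omega_{R_{0}}}(2\mu e(\bar{\mathbf{u}}_{1}^{\alpha}),e(\bar{\mathbf{u}}_{1}^{\beta}))$ (and its $\varepsilon=0$ counterpart) from the closed forms \eqref{zzwz002}, \eqref{QLA001}, \eqref{DANZW001}. For the diagonal entries $\alpha=\beta\in\{1,\dots,n\}$ one expands $(2\mu e(\bar{\mathbf{u}}_{1}^{\alpha}),e(\bar{\mathbf{u}}_{1}^{\alpha}))$ monomial by monomial; the role of the constants \eqref{constants001}--\eqref{constants002} is precisely that the most singular contributions organise so that, after integrating the fibre variable $x_{n}$ over $(-\delta/2,\delta/2)$, the surviving singular part collapses to a Laplace-type radial integral in $x'$ against $\delta(x')^{-1}=(\varepsilon+2\kappa|x'|^{2})^{-1}$, whose value as $\varepsilon\to0$ is $\mathcal{L}_{\alpha}(\pi/2\kappa)^{(n-1)/2}\rho_{n}(\varepsilon)$ when $n=2,3$ and a finite constant when $n>3$; the remaining $\varepsilon$-bounded part is absorbed, together with the exterior contribution, into $\mathcal{G}_{n}^{\ast\alpha}$ (for $n=2,3$) or into $a_{11}^{\ast\alpha\alpha}$ (for $n>3$). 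For the off-diagonal and mixed entries the decisive point is parity: $\Omega_{2R_{0}}$ is invariant under each reflection $x_{i}\mapsto-x_{i}$, $\delta$ is even in every variable and $\mathfrak{G}$ is odd in $x_{n}$, so when $\alpha\neq\beta$ the integrand $(2\mu e(\bar{\mathbf{u}}_{1}^{\alpha}),e(\bar{\mathbf{u}}_{1}^{\beta}))$ is odd in at least one coordinate over the symmetric neck and its most singular part integrates to zero, leaving only the lower-order contributions recorded in \eqref{M001}, \eqref{LMC1} and \eqref{M005}.

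The third step treats $\Omega\setminus\Omega_{R_{0}}$ and the matching with the touching configuration: there $\mathbf{u}_{1}^{\gamma}$, $\bar{\mathbf{u}}_{1}^{\gamma}$ are uniformly bounded in $C^{2,\gamma}$, and $\Omega\setminus\Omega_{R_{0}}$ differs from $\Omega^{\ast}\setminus\Omega_{R_{0}}^{\ast}$ only by an $O(\varepsilon)$ normal translation of $\partial D_{i}$ away from the contact point, so the maximum modulus principle, interpolation, a rescaling near $\partial\Omega_{R_{0}}$ that absorbs the geometric degeneracy of the neck, and the interior and boundary Schauder estimates for the Stokes system quoted in the introduction give $\|\nabla(\mathbf{u}_{1}^{\gamma}-\mathbf{u}_{1}^{\ast\gamma})\|+\|p_{1}^{\gamma}-p_{1}^{\ast\gamma}\|=O(\varepsilon^{1/4})$ there; combined with Corollary \ref{coro00z} this closes parts (i)--(iii). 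For part (iv) I would use that $\mathbf{u}_{1}^{\alpha}+\mathbf{u}_{2}^{\alpha}$ and its touching analogue carry no potential jump across the gap and hence, by Corollary \ref{coro00z}, are exponentially small throughout $\Omega_{R_{0}}$ (resp. $\Omega_{R_{0}}^{\ast}$); writing $\sum_{i}a_{i1}^{\alpha\beta}=\int_{\Omega}(2\mu e(\mathbf{u}_{1}^{\alpha}+\mathbf{u}_{2}^{\alpha}),e(\mathbf{u}_{1}^{\beta}))$, the neck contribution is negligible and the exterior one converges at the domain-perturbation rate $O(\varepsilon^{1/4})$, which is \eqref{AZQ001}; the statement for $\sum_{i,j}a_{ij}^{\alpha\beta}$ follows by one more application of the same argument.

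The main obstacle, I expect, is the second step for the off-diagonal and mixed pairs and for $\alpha=n$: one must expand $(2\mu e(\bar{\mathbf{u}}_{1}^{\alpha}),e(\bar{\mathbf{u}}_{1}^{\beta}))$ term by term out of \eqref{QLA001}--\eqref{DANZW001}, identify precisely which pieces cancel (through the algebraic identities forced on \eqref{constants001}--\eqref{constants002} and through parity), match the surviving regular part with $a_{11}^{\ast\alpha\beta}$ and $\mathcal{G}_{n}^{\ast\alpha}$, and at the same time keep the error exponents sharp for $n=2$, $n=3$ and $n>3$ simultaneously --- this is the computation the introduction describes as ``quite complex''.
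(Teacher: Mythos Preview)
Your proposal follows essentially the same strategy as the paper: the three-way split of the domain at radii $\varepsilon^{\bar\theta}$ and $R_{0}$, the replacement $\mathbf{u}_{1}^{\gamma}\to\bar{\mathbf{u}}_{1}^{\gamma}$ in the inner neck via Proposition~\ref{thm86}, the parity argument for the off-diagonal leading terms, the maximum modulus theorem plus interpolation and rescaling to control $\nabla(\mathbf{u}_{1}^{\gamma}-\mathbf{u}_{1}^{\ast\gamma})$ outside a thin cylinder, and Corollary~\ref{coro00z} for part~(iv). One small correction: for the single functions $\mathbf{u}_{1}^{\gamma}-\mathbf{u}_{1}^{\ast\gamma}$ the exterior gradient rate the paper obtains is $O(\varepsilon^{1/12})$ (from $|\mathbf{u}_{1}^{\gamma}-\mathbf{u}_{1}^{\ast\gamma}|\le C\varepsilon^{1/3}$ on the boundary with the choice $\theta=1/3$, then interpolation and the rescaling cutoff at $|z'|\ge\varepsilon^{1/24}$), not $O(\varepsilon^{1/4})$; the $\varepsilon^{1/4}$ rate you quote is specific to the \emph{sum} $\mathbf{u}_{1}^{\alpha}+\mathbf{u}_{2}^{\alpha}-\mathbf{u}_{1}^{\ast\alpha}-\mathbf{u}_{2}^{\ast\alpha}$ in part~(iv), where the exponential neck decay permits the better boundary bound and the cutoff $|z'|\ge\varepsilon^{1/8}$.
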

\begin{remark}
As seen in Lemma \ref{lemmabc}, we give a precise computation for every element of the coefficient matrix in \eqref{PLA001}, which ensures that the values of $C_{1}^{\alpha}-C_{2}^{\alpha}$ and $C_{2}^{\alpha}$, $\alpha=1,2,...,\frac{n(n+1)}{2}$ can be explicitly solved for the general domain and any boundary data in all dimensions. This improves the corresponding results in \cite{LX2022}.
\end{remark}

\begin{proof}
\noindent{\bf Step 1.} Proofs of \eqref{LMC}--\eqref{LMC1}. For $\alpha=1,2,...,\frac{n(n+1)}{2}$, denote $\bar{\mathbf{u}}_{1}^{\ast\alpha}:=\bar{\mathbf{u}}_{1}^{\alpha}|_{\varepsilon=0}$, where $\bar{\mathbf{u}}_{1}^{\alpha}$ is defined by \eqref{zzwz002}. Applying Proposition \ref{thm86} to $\bar{\mathbf{u}}_{1}^{\ast\alpha}$, we obtain that for $x\in\Omega_{R_{0}}^{\ast}$,
\begin{align}\label{GZIJ}
\nabla\mathbf{u}_{1}^{\ast\alpha}=&\nabla\bar{\mathbf{u}}_{1}^{\ast\alpha}+O(1)
\begin{cases}
1,&\alpha=1,...,n-1,\\
|x'|^{-1},&\alpha=n,\\
1,&\alpha=n+1,...,2n-1,\\
|x'|,&\alpha=2n,...,\frac{n(n+1)}{2},\,n\geq3.
\end{cases}
\end{align}
A direct computation shows that for $x\in\Omega_{R_{0}}^{\ast}$,
\begin{align}\label{LKT6.003}
|\partial_{x_{n}}(\bar{\mathbf{u}}_{1}^{\alpha}-\bar{\mathbf{u}}_{1}^{\ast\alpha})|\leq&
\begin{cases}
\frac{C\varepsilon}{|x'|^{2}(\varepsilon+|x'|^{2})},&\alpha=1,2,...,n,\\
\frac{C\varepsilon}{|x'|(\varepsilon+|x'|^{2})},&\alpha=n+1,...,\frac{n(n+1)}{2}.
\end{cases}
\end{align}
For $0<t<R_{0}$, define
\begin{align*}
\mathcal{C}_{t}:=\left\{x\in\mathbb{R}^{n}\Big|\;-\varepsilon-2\max_{|x'|\leq t}h(x')\leq x_{n}\leq\varepsilon+2\max_{|x'|\leq t}h(x'),\;|x'|<t\right\}.
\end{align*}

For $\alpha=1,2,...,\frac{n(n+1)}{2}$, by linearity, $(\mathbf{u}_{1}^{\alpha}-\mathbf{u}_{1}^{\ast\alpha},p_{1}^{\alpha}-p_{1}^{\ast\alpha})$ solves
\begin{align*}
\begin{cases}
\nabla\cdot\sigma[\mathbf{u}_{1}^{\alpha}-\mathbf{u}_{1}^{\ast\alpha},p_{1}^{\alpha}-p_{1}^{\ast\alpha}]=0,&\mathrm{in}\;\,D\setminus(\overline{D_{1}\cup D_{2}\cup D_{1}^{\ast}\cup D_{2}^{\ast}}),\\
\nabla\cdot(\mathbf{u}_{1}^{\alpha}-\mathbf{u}_{1}^{\ast\alpha})=0,&\mathrm{in}\;\,D\setminus(\overline{D_{1}\cup D_{2}\cup D_{1}^{\ast}\cup D_{2}^{\ast}}),\\
\mathbf{u}_{1}^{\alpha}-\mathbf{u}_{1}^{\ast\alpha}=\boldsymbol{\psi}_{\alpha}-\mathbf{u}_{1}^{\ast\alpha},&\mathrm{on}\;\,\partial D_{1}\setminus D_{1}^{\ast},\\
\mathbf{u}_{1}^{\alpha}-\mathbf{u}_{1}^{\ast\alpha}=-\mathbf{u}_{1}^{\ast\alpha},&\mathrm{on}\;\,\partial D_{2}\setminus D_{2}^{\ast},\\
\mathbf{u}_{1}^{\alpha}-\mathbf{u}_{1}^{\ast\alpha}=\mathbf{u}_{1}^{\alpha}-\boldsymbol{\psi}_{\alpha},&\mathrm{on}\;\,\partial D_{1}^{\ast}\setminus(D_{1}\cup\{0\}),\\
\mathbf{u}_{1}^{\alpha}-\mathbf{u}_{1}^{\ast\alpha}=\mathbf{u}_{1}^{\alpha},&\mathrm{on}\;\,\partial D_{2}^{\ast}\setminus(D_{2}\cup\{0\}),\\
\mathbf{u}_{1}^{\alpha}-\mathbf{u}_{1}^{\ast\alpha}=0,&\mathrm{on}\;\,\partial D.
\end{cases}
\end{align*}
By the standard boundary and interior estimates of Stokes flow (see e.g. \cite{K1995,MR2009}), we have
\begin{align*}
|\partial_{x_{n}}\mathbf{u}_{1}^{\ast\alpha}|\leq C, \quad\mathrm{in}\;\Omega^{\ast}\setminus\Omega^{\ast}_{R_{0}},
\end{align*}
which reads that for $x\in\partial D_{i}\setminus D_{i}^{\ast}$, $i=1,2$,
\begin{align}\label{LKT6.007}
|(\mathbf{u}_{1}^{\alpha}-\mathbf{u}_{1}^{\ast\alpha})(x',x_{n})|=|\mathbf{u}_{1}^{\ast\alpha}(x',x_{n}+(-1)^{i}\varepsilon/2)-\mathbf{u}_{1}^{\ast\alpha}(x',x_{n})|\leq C\varepsilon.
\end{align}
From \eqref{Le2.025}, we derive that for $x\in\partial D_{i}^{\ast}\setminus(D_{i}\cup\mathcal{C}_{\varepsilon^{\theta}})$, $i=1,2,$
\begin{align}\label{LKT6.008}
|(\mathbf{u}_{1}^{\alpha}-\mathbf{u}_{1}^{\ast\alpha})(x',x_{n})|=&|\mathbf{u}_{1}^{\alpha}(x',x_{n})-\mathbf{u}_{1}^{\alpha}(x',x_{n}+(-1)^{i-1}\varepsilon/2)|\notag\\
\leq& C
\begin{cases}
\varepsilon^{1-2\theta},&\alpha=1,2,...,n,\\
\varepsilon^{1-\theta},&\alpha=n+1,...,\frac{n(n+1)}{2},
\end{cases}
\end{align}
where $0<\theta<\frac{1}{2}$ to be chosen below. Combining \eqref{Le2.025} and \eqref{GZIJ}--\eqref{LKT6.003}, we deduce that for $x\in\Omega_{R_{0}}^{\ast}\cap\{|x'|=\varepsilon^{\theta}\}$,
\begin{align*}
|\partial_{x_{n}}(\mathbf{u}_{1}^{\alpha}-\mathbf{u}_{1}^{\ast\alpha})|\leq&|\partial_{x_{n}}(\mathbf{u}_{1}^{\alpha}-\bar{\mathbf{u}}_{1}^{\alpha})|+|\partial_{x_{n}}(\bar{\mathbf{u}}_{1}^{\alpha}-\bar{\mathbf{u}}_{1}^{\ast\alpha}|+|\partial_{x_{n}}(\mathbf{u}_{1}^{\ast\alpha}-\bar{\mathbf{u}}_{1}^{\ast\alpha})|\notag\\
\leq&C
\begin{cases}
1+\varepsilon^{1-4\theta},&\alpha=1,...,n-1,\\
\varepsilon^{-\theta}+\varepsilon^{1-4\theta},&\alpha=n,\\
1+\varepsilon^{1-3\theta},&\alpha=n+1,...,2n-1,\\
\varepsilon^{\theta}+\varepsilon^{1-3\theta},&\alpha=2n,...,\frac{n(n+1)}{2},\,n\geq3.
\end{cases}
\end{align*}
This, together with \eqref{LKT6.008}, gives that for $x\in\Omega_{R_{0}}^{\ast}\cap\{|x'|=\varepsilon^{\theta}\}$,
\begin{align}\label{LKT6.009}
&|(\mathbf{u}_{1}^{\alpha}-\mathbf{u}_{1}^{\ast\alpha})(x',x_{n})|\notag\\
&\leq|(\mathbf{u}_{1}^{\alpha}-\mathbf{u}_{1}^{\ast\alpha})(x',x_{n})-(\mathbf{u}_{1}^{\alpha}-\mathbf{u}_{1}^{\ast\alpha})(x',-h(x'))|+|(\mathbf{u}_{1}^{\alpha}-\mathbf{u}_{1}^{\ast\alpha})(x',-h(x'))|\notag\\
&\leq \sup\limits_{|x_{n}|\leq h(x'),|x'|=\varepsilon^{\theta}}2h(x')|\partial_{n}(\mathbf{u}_{1}^{\alpha}-\mathbf{u}_{1}^{\ast\alpha})|+ C
\begin{cases}
\varepsilon^{1-2\theta},&\alpha=1,2,...,n,\\
\varepsilon^{1-\theta},&\alpha=n+1,...,\frac{n(n+1)}{2}
\end{cases}\notag\\
&\leq C
\begin{cases}
\varepsilon^{2\theta}+\varepsilon^{1-2\theta},&\alpha=1,...,n-1,\\
\varepsilon^{\theta}+\varepsilon^{1-2\theta},&\alpha=n,\\
\varepsilon^{2\theta}+\varepsilon^{1-\theta},&\alpha=n+1,...,2n-1,\\
\varepsilon^{3\theta}+\varepsilon^{1-\theta},&\alpha=2n,...,\frac{n(n+1)}{2},\,n\geq3.
\end{cases}
\end{align}
Take $\theta=\frac{1}{3}$. Then combining \eqref{LKT6.007}--\eqref{LKT6.009}, we have
$$|\mathbf{u}_{1}^{\alpha}-\mathbf{u}_{1}^{\ast\alpha}|\leq C\varepsilon^{\frac{1}{3}},\quad\;\,\mathrm{on}\;\,\partial\big(D\setminus\big(\overline{D_{1}\cup D_{2}\cup D_{1}^{\ast}\cup D_{2}^{\ast}\cup\mathcal{C}_{\varepsilon^{\frac{1}{3}}}}\big)\big).$$
Making use of the maximum modulus theorem for Stokes systems in \cite{L1969}, we obtain that for $\alpha=1,2,...,\frac{n(n+1)}{2}$,
\begin{align}\label{AST123}
|\mathbf{u}_{1}^{\alpha}-\mathbf{u}_{1}^{\ast\alpha}|\leq C\varepsilon^{\frac{1}{3}},\quad\;\,\mathrm{in}\;\,D\setminus\big(\overline{D_{1}\cup D_{2}\cup D_{1}^{\ast}\cup D_{2}^{\ast}\cup\mathcal{C}_{\varepsilon^{\frac{1}{3}}}}\big).
\end{align}
For any fixed $\varepsilon^{\frac{1}{24}}\leq|z'|\leq R_{0}$, under the following change of variable
\begin{align*}
\begin{cases}
x'-z'=|z'|^{2}y',\\
x_{n}=|z'|^{2}y_{n},
\end{cases}
\end{align*}
$\Omega_{|z'|+|z'|^{2}}\setminus\Omega_{|z'|}$ and $\Omega_{|z'|+|z'|^{2}}^{\ast}\setminus\Omega_{|z'|}^{\ast}$ become two squares (or cylinders) $Q_{1}$ and $Q_{1}^{\ast}$ of nearly unit-size, respectively. Consider
\begin{align*}
\mathbf{U}_{1}^{\alpha}(y):=\mathbf{u}_{1}^{\alpha}(z'+|z'|^{2}y',|z'|^{2}y_{n}),\quad\mathrm{in}\;Q_{1},
\end{align*}
and
\begin{align*}
\mathbf{U}_{1}^{\ast\alpha}(y):=\mathbf{u}_{1}^{\ast\alpha}(z'+|z'|^{2}y',|z'|^{2}y_{n}),\quad\mathrm{in}\;Q_{1}^{\ast}.
\end{align*}
Applying the maximum modulus theorem and the standard boundary and interior estimates for Stokes flow that
\begin{align}\label{AZQNZ001}
|\nabla^{2}\mathbf{U}_{1}^{\alpha}|\leq C,\quad\mathrm{in}\;Q_{1},\quad\mathrm{and}\;|\nabla^{2}\mathbf{U}_{1}^{\ast\alpha}|\leq C,\quad\mathrm{in}\;Q_{1}^{\ast}.
\end{align}
By an interpolation with \eqref{AST123}--\eqref{AZQNZ001}, we have
\begin{align*}
|\nabla(\mathbf{U}_{1}^{\alpha}-\mathbf{U}_{1}^{\ast\alpha})|\leq C\varepsilon^{\frac{1}{3}(1-\frac{1}{2})}\leq C\varepsilon^{\frac{1}{6}}.
\end{align*}
Rescaling back to $\mathbf{u}_{1}^{\alpha}-\mathbf{u}_{1}^{\ast\alpha}$, we deduce that for $\varepsilon^{\frac{1}{24}}\leq|z'|\leq R_{0}$,
\begin{align*}
|\nabla(\mathbf{u}_{1}^{\alpha}-\mathbf{u}_{1}^{\ast\alpha})(x)|\leq C\varepsilon^{\frac{1}{6}}|z'|^{-2}\leq C\varepsilon^{\frac{1}{12}},\quad x\in\Omega^{\ast}_{|z'|+|z'|^{2}}\setminus\Omega_{|z'|}^{\ast}.
\end{align*}
Combining the above-mentioned facts, we obtain that for $\alpha=1,2,...,\frac{n(n+1)}{2}$,
\begin{align}\label{con035}
|\nabla(\mathbf{u}_{1}^{\alpha}-\mathbf{u}_{1}^{\ast\alpha})|\leq C\varepsilon^{\frac{1}{12}},\quad\;\,\mathrm{in}\;\,D\setminus\big(\overline{D_{1}\cup D_{2}\cup D_{1}^{\ast}\cup D_{2}^{\ast}\cup\mathcal{C}_{\varepsilon^{\frac{1}{24}}}}\big).
\end{align}

Pick $\bar{\theta}=\frac{1}{24}$. For $\alpha=1,2,...,\frac{n(n+1)}{2}$, we decompose $a_{11}^{\alpha\alpha}$ into the following three subparts:
\begin{align*}
\mathrm{I}=&\int_{\Omega_{\varepsilon^{\bar{\theta}}}}(2\mu e(\mathbf{u}_{1}^{\alpha}),e(\mathbf{u}_{1}^{\alpha})),\quad\mathrm{II}=\int_{\Omega\setminus\Omega_{R_{0}}}(2\mu e(\mathbf{u}_{1}^{\alpha}),e(\mathbf{u}_{1}^{\alpha})),\\
\mathrm{III}=&\int_{\Omega_{R_{0}}\setminus\Omega_{\varepsilon^{\bar{\theta}}}}(2\mu e(\mathbf{u}_{1}^{\alpha}),e(\mathbf{u}_{1}^{\alpha})).
\end{align*}
For the convenience of presentation, define
\begin{align}\label{MAINLDING001}
\bar{v}=\frac{x_{n}}{\varepsilon+2h(x')}+\frac{1}{2},\;\,\mathrm{in}\;\Omega_{R_{0}},\quad\bar{v}^{\ast}=\frac{x_{n}}{2h(x')}+\frac{1}{2},\;\,\mathrm{in}\;\Omega^{\ast}_{R_{0}}.
\end{align}
Then we have $\bar{v}=\mathfrak{G}+\frac{1}{2}$ in $\Omega_{R_{0}}$ with $\varepsilon>0$ and $\bar{v}^{\ast}=(\mathfrak{G}+\frac{1}{2})|_{\varepsilon=0}$ in $\Omega_{R_{0}}^{\ast}$. Recall that when $\alpha=1,2,...,n$, $\boldsymbol{\psi}_{\alpha}=\mathbf{e}_{\alpha}$; when $\alpha=n+1,...,\frac{n(n+1)}{2}$, there exist two indices $1\leq i_{\alpha}<j_{\alpha}\leq n$ such that
$\boldsymbol{\psi}_{\alpha}=x_{j_{\alpha}}\mathbf{e}_{i_{\alpha}}-x_{i_{\alpha}}\mathbf{e}_{j_{\alpha}}$. Especially when $n+1\leq\alpha\leq2n-1$, we have $i_{\alpha}=\alpha-n,\,j_{\alpha}=n$. A straightforward computation shows that
\begin{align*}
(2\mu e(\bar{\mathbf{u}}_{1}^{\alpha}),e(\bar{\mathbf{u}}_{1}^{\alpha}))=&
\begin{cases}
\mu\big[(\partial_{x_{\alpha}}\bar{v})^{2}+\sum\limits^{n}_{i=1}(\partial_{x_{i}}\bar{v})^{2}\big]+(2\mu e(\boldsymbol{\mathcal{F}}_{\alpha}),e(\boldsymbol{\mathcal{F}}_{\alpha}))\\
+2(2\mu e(\boldsymbol{\psi}_{\alpha}\bar{v}),e(\boldsymbol{\mathcal{F}}_{\alpha})),\;\,\alpha=1,2,...,n,\\
\mu(x_{i_{\alpha}}^{2}+x_{j_{\alpha}}^{2})\sum\limits^{n}_{k=1}(\partial_{x_{k}}\bar{v})^{2}+\mu(x_{j_{\alpha}}\partial_{x_{i_{\alpha}}}\bar{v}-x_{i_{\alpha}}\partial_{x_{j_{\alpha}}}\bar{v})^{2}\notag\\
+2(2\mu e(\boldsymbol{\psi}_{\alpha}\bar{v}),e(\boldsymbol{\mathcal{F}}_{\alpha}))+(2\mu e(\boldsymbol{\mathcal{F}}_{\alpha}),e(\boldsymbol{\mathcal{F}}_{\alpha})),\;\,\alpha=n+1,...,\frac{n(n+1)}{2},
\end{cases}
\end{align*}
where $\boldsymbol{\mathcal{F}}_{\alpha}$, $\alpha=1,2,...,\frac{n(n+1)}{2}$ are given by \eqref{QLA001} and $\bar{v}$ is defined by \eqref{MAINLDING001}. A consequence of Proposition \ref{thm86} yields that
\begin{align}\label{con03365}
\mathrm{I}=&\int_{\Omega_{\varepsilon^{\bar{\theta}}}}(2\mu e(\bar{\mathbf{u}}_{1}^{\alpha}),e(\bar{\mathbf{u}}_{1}^{\alpha}))+2\int_{\Omega_{\varepsilon^{\bar{\theta}}}}(2\mu e(\mathbf{u}_{1}^{\alpha}-\bar{\mathbf{u}}_{1}^{\alpha}),e(\bar{\mathbf{u}}_{1}^{\alpha}))\notag\\
&+\int_{\Omega_{\varepsilon^{\bar{\theta}}}}(2\mu e(\mathbf{u}_{1}^{\alpha}-\bar{\mathbf{u}}_{1}^{\alpha}),e(\mathbf{u}_{1}^{\alpha}-\bar{\mathbf{u}}_{1}^{\alpha}))\notag\\
=&
\begin{cases}
\mathcal{L}_{\alpha}\int_{|x'|<\varepsilon^{\bar{\theta}}}\frac{1}{\varepsilon+2h(x')}+O(1)\varrho_{\alpha,n}(\varepsilon),&\alpha=1,...,n,\\
\frac{\mathcal{L}_{\alpha}}{n-1}\int_{|x'|<\varepsilon^{\bar{\theta}}}\frac{|x'|^{2}}{\varepsilon+2h(x')}+O(1)\varepsilon^{\frac{n}{24}},&\alpha=n+1,...,\frac{n(n+1)}{2},
\end{cases}
\end{align}
where $\mathcal{L}_{\alpha}$, $\alpha=1,2,...,\frac{n(n+1)}{2}$ and $\varrho_{\alpha,n}(\varepsilon)$ are, respectively, defined by \eqref{CONT001} and \eqref{rate005}.

With regard to the second term $\mathrm{II}$, since $|\nabla \mathbf{u}_{1}^{\alpha}|$ remains bounded in $(D_{1}^{\ast}\cup D_{2}^{\ast})\setminus(D_{1}\cup D_{2}\cup\Omega_{R_{0}})$ and $(D_{1}\cup D_{2})\setminus (D_{1}^{\ast}\cup D_{2}^{\ast})$ with their volume of order $O(\varepsilon)$, it then follows from \eqref{con035} that
\begin{align}\label{KKAA123}
\mathrm{II}=&\int_{D\setminus(D_{1}\cup D_{2}\cup D_{1}^{\ast}\cup D_{2}^{\ast}\cup\Omega_{R_{0}})}(2\mu e(\mathbf{u}_{1}^{\alpha}),e(\mathbf{u}_{1}^{\alpha}))+O(1)\varepsilon\notag\\
=&\int_{D\setminus(D_{1}\cup D_{2}\cup D_{1}^{\ast}\cup D_{2}^{\ast}\cup\Omega_{R_{0}})}\left((2\mu e(\mathbf{u}_{1}^{\ast\alpha}),e(\mathbf{u}_{1}^{\ast\alpha}))+2(2\mu e(\mathbf{u}_{1}^{\alpha}-\mathbf{u}_{1}^{\ast\alpha}),e(\mathbf{u}_{1}^{\ast\alpha}))\right)\notag\\
&+\int_{D\setminus(D_{1}\cup D_{2}\cup D_{1}^{\ast}\cup D_{2}^{\ast}\cup\Omega_{R_{0}})}(2\mu e(\mathbf{u}_{1}^{\alpha}-\mathbf{u}_{1}^{\ast\alpha}),e(\mathbf{u}_{1}^{\alpha}-\mathbf{u}_{1}^{\ast\alpha}))+O(1)\varepsilon\notag\\
=&\int_{\Omega^{\ast}\setminus\Omega^{\ast}_{R_{0}}}(2\mu e(\mathbf{u}_{1}^{\ast\alpha}),e(\mathbf{u}_{1}^{\ast\alpha}))+O(1)\varepsilon^{\frac{1}{12}}.
\end{align}

As for $\mathrm{III}$, it can be further split as follows:
\begin{align*}
\mathrm{III}_{1}=&\int_{\Omega^{\ast}_{R_{0}}\setminus\Omega^{\ast}_{\varepsilon^{\bar{\theta}}}}(2\mu e(\mathbf{u}_{1}^{\alpha}-\mathbf{u}_{1}^{\ast\alpha}),e(\mathbf{u}_{1}^{\alpha}-\mathbf{u}_{1}^{\ast\alpha}))+2\int_{\Omega^{\ast}_{R_{0}}\setminus\Omega^{\ast}_{\varepsilon^{\bar{\theta}}}}(2\mu e(\mathbf{u}_{1}^{\alpha}-\mathbf{u}_{1}^{\ast\alpha}),e(\mathbf{u}_{1}^{\ast\alpha})),\\
\mathrm{III}_{2}=&\int_{(\Omega_{R_{0}}\setminus\Omega_{\varepsilon^{\bar{\theta}}})\setminus(\Omega^{\ast}_{R_{0}}\setminus\Omega^{\ast}_{\varepsilon^{\bar{\theta}}})}(2\mu e(\mathbf{u}_{1}^{\alpha}),e(\mathbf{u}_{1}^{\alpha})),\\
\mathrm{III}_{3}=&\int_{\Omega^{\ast}_{R_{0}}\setminus\Omega^{\ast}_{\varepsilon^{\bar{\theta}}}}(2\mu e(\mathbf{u}_{1}^{\ast\alpha}),e(\mathbf{u}_{1}^{\ast\alpha})).
\end{align*}
From \eqref{GZIJ} and \eqref{con035}, we deduce
\begin{align}\label{con036666}
|\mathrm{III}_{1}|\leq\,C\varepsilon^{\frac{1}{12}}.
\end{align}
Due to the fact that the volume of $(\Omega_{R_{0}}\setminus\Omega_{\varepsilon^{\bar{\theta}}})\setminus(\Omega^{\ast}_{R_{0}}\setminus\Omega^{\ast}_{\varepsilon^{\bar{\theta}}})$ is of order $O(\varepsilon)$, we have from \eqref{Le2.025} that for $\alpha=1,2,...,n,$
\begin{align}\label{con0333355}
|\mathrm{III}_{2}|\leq\,C\varepsilon\int_{\varepsilon^{\bar{\theta}}<|x'|<R_{0}}\frac{dx'}{|x'|^{4}}\leq& C
\begin{cases}
\varepsilon,&n>5,\\
\varepsilon|\ln\varepsilon|,&n=5,\\
\varepsilon^{\frac{n+19}{24}},&n<5,
\end{cases}
\end{align}
and, for $\alpha=n+1,...,\frac{n(n+1)}{2}$,
\begin{align}\label{con0333355KL}
|\mathrm{III}_{2}|\leq\,C\varepsilon\int_{\varepsilon^{\bar{\theta}}<|x'|<R_{0}}\frac{dx'}{|x'|^{2}}\leq& C
\begin{cases}
\varepsilon,&n>3,\\
\varepsilon|\ln\varepsilon|,&n=3,\\
\varepsilon^{\frac{n+21}{24}},&n=2.
\end{cases}
\end{align}
With regard to $\mathrm{III}_{3}$, it follows from \eqref{GZIJ} again that
\begin{align}\label{FT001}
\mathrm{III}_{3}=&\int_{\Omega_{R_{0}}^{\ast}\setminus\Omega^{\ast}_{\varepsilon^{\bar{\theta}}}}(2\mu e(\bar{\mathbf{u}}_{1}^{\ast\alpha}),e(\bar{\mathbf{u}}_{1}^{\ast\alpha}))+2\int_{\Omega_{R_{0}}^{\ast}\setminus\Omega^{\ast}_{\varepsilon^{\bar{\theta}}}}(2\mu e(\mathbf{u}_{1}^{\ast\alpha}-\bar{\mathbf{u}}_{1}^{\ast\alpha}),e(\bar{\mathbf{u}}_{1}^{\ast\alpha}))\notag\\
&+\int_{\Omega_{R_{0}}^{\ast}\setminus\Omega^{\ast}_{\varepsilon^{\bar{\theta}}}}(2\mu e(\mathbf{u}_{1}^{\ast\alpha}-\bar{\mathbf{u}}_{1}^{\ast\alpha}),e(\mathbf{u}_{1}^{\ast\alpha}-\bar{\mathbf{u}}_{1}^{\ast\alpha}))\notag\\
=&
\begin{cases}
\mathcal{L}_{\alpha}\int_{\varepsilon^{\bar{\theta}}<|x'|<R_{0}}\frac{dx'}{2h(x')}-\int_{\Omega^{\ast}\setminus\Omega^{\ast}_{R_{0}}}(2\mu e(\mathbf{u}_{1}^{\ast\alpha}),e(\mathbf{u}_{1}^{\ast\alpha}))\\
+\mathcal{M}_{n}^{\ast\alpha}+O(1)\max\{\varepsilon^{\frac{1}{12}},\varrho_{\alpha,n}(\varepsilon)\},\quad\alpha=1,2,...,n,\\
\frac{\mathcal{L}_{\alpha}}{n-1}\int_{\varepsilon^{\bar{\theta}}<|x'|<R_{0}}\frac{|x'|^{2}}{2h(x')}dx'-\int_{\Omega^{\ast}\setminus\Omega^{\ast}_{R_{0}}}(2\mu e(\mathbf{u}_{1}^{\ast\alpha}),e(\mathbf{u}_{1}^{\ast\alpha}))\\
+\mathcal{M}_{n}^{\ast\alpha}+O(1)\varepsilon^{\frac{1}{12}},\quad\alpha=n+1,...,\frac{n(n+1)}{2},
\end{cases}
\end{align}
where
\begin{align*}
\mathcal{M}_{n}^{\ast\alpha}:=&
\begin{cases}
0,&\alpha=n,\,n=2,\\
M_{n}^{\ast\alpha},&\text{otherwise}
\end{cases}
\end{align*}
with $M_{n}^{\ast\alpha}$ given by
\begin{align*}
M_{n}^{\ast\alpha}=&\int_{\Omega^{\ast}\setminus\Omega^{\ast}_{R_{0}}}(2\mu e(\bar{\mathbf{u}}_{1}^{\ast\alpha}),e(\bar{\mathbf{u}}_{1}^{\ast\alpha}))+\int_{\Omega_{R_{0}}^{\ast}}(2\mu e(\mathbf{u}_{1}^{\ast\alpha}-\bar{\mathbf{u}}_{1}^{\ast\alpha}),e(\mathbf{u}_{1}^{\ast\alpha}-\bar{\mathbf{u}}_{1}^{\ast\alpha}))\notag\\
&+\int_{\Omega_{R_{0}}^{\ast}}\big[2(2\mu e(\mathbf{u}_{1}^{\ast\alpha}-\bar{\mathbf{u}}_{1}^{\ast\alpha}),e(\bar{\mathbf{u}}_{1}^{\ast\alpha}))+2(2\mu e(\boldsymbol{\psi}_{\alpha}\bar{v}^{\ast}),e(\boldsymbol{\mathcal{F}}_{\alpha}^{\ast}))+(2\mu e(\boldsymbol{\mathcal{F}}_{\alpha}^{\ast}),e(\boldsymbol{\mathcal{F}}_{\alpha}^{\ast}))\big]\notag\\
&+
\begin{cases}
\int_{\Omega_{R_{0}}^{\ast}}\mu(\partial_{x_{\alpha}}\bar{v}^{\ast})^{2}+\mu\sum\limits^{n-1}_{i=1}(\partial_{x_{i}}\bar{v}^{\ast})^{2},\quad\alpha=1,...,n-1,\\
\int_{\Omega_{R_{0}}^{\ast}}\mu\sum\limits^{n-1}_{i=1}(\partial_{x_{i}}\bar{v}^{\ast})^{2},\quad\alpha=n,\,n\geq3,\\
\int_{\Omega_{R_{0}}^{\ast}}\mu\big[\mu(x_{\alpha-n}^{2}+x_{n}^{2})\sum\limits^{n-1}_{k=1}(\partial_{x_{k}}\bar{v}^{\ast})^{2}+x_{n}^{2}((\partial_{x_{n}}\bar{v}^{\ast})^{2}+(\partial_{x_{\alpha-n}}\bar{v}^{\ast})^{2})\\
\quad\quad\;-2x_{\alpha-n}x_{n}\partial_{x_{\alpha-n}}\bar{v}^{\ast}\partial_{x_{n}}\bar{v}^{\ast}\big],\;\quad\alpha=n+1,...,2n-1,\\
\int_{\Omega_{R_{0}}^{\ast}}\mu\big[(x_{i_{\alpha}}^{2}+x_{j_{\alpha}}^{2})\sum\limits^{n-1}_{k=1}(\partial_{x_{k}}\bar{v}^{\ast})^{2}\\
\quad\quad\;+(x_{j_{\alpha}}\partial_{x_{i_{\alpha}}}\bar{v}^{\ast}-x_{i_{\alpha}}\partial_{x_{j_{\alpha}}}\bar{v}^{\ast})^{2}\big],\;\quad\alpha=2n,...,\frac{n(n+1)}{2},\;n\geq3.
\end{cases}
\end{align*}
Consequently, from \eqref{con03365}--\eqref{FT001}, it follows that

$(\rm{i})$ if $\alpha=1,2,...,n$, then
\begin{align*}
a_{11}^{\alpha\alpha}=&\mathcal{L}_{\alpha}\left(\int_{\varepsilon^{\bar{\theta}}<|x'|<R_{0}}\frac{dx'}{2h(x')}+\int_{|x'|<\varepsilon^{\bar{\theta}}}\frac{dx'}{\varepsilon+2h(x')}\right)+\mathcal{M}_{n}^{\ast\alpha}+O(1)\max\{\varepsilon^{\frac{1}{12}},\varrho_{\alpha,n}(\varepsilon)\}.
\end{align*}
On one hand, for $n=2,3$, then
\begin{align*}
&\int_{\varepsilon^{\bar{\theta}}<|x'|<R_{0}}\frac{1}{2h(x')}+\int_{|x'|<\varepsilon^{\bar{\theta}}}\frac{1}{\varepsilon+2h(x')}\notag\\
=&\int_{|x'|<R_{0}}\frac{1}{\varepsilon+2h(x')}+\int_{\varepsilon^{\bar{\theta}}<|x'|<R_{0}}\frac{\varepsilon}{2h(x')(\varepsilon+2h(x'))}\notag\\
=&\int_{|x'|<R_{0}}\frac{1}{\varepsilon+2\kappa|x'|^{2}}+O(1)\varepsilon^{\frac{n+19}{24}}\\
=&\left(\frac{\pi}{2\kappa}\right)^{\frac{n-1}{2}}\rho_{n}(\varepsilon)+\mathcal{K}_{n}+O(1)\varepsilon^{\frac{n+19}{24}},
\end{align*}
where
\begin{align*}
\mathcal{K}_{n}=&
\begin{cases}
-(\kappa R_{0})^{-1},&n=2,\\
\pi(\ln\sqrt{2\kappa}+\ln R_{0})\kappa^{-1},&n=3.
\end{cases}
\end{align*}
Then we derive
\begin{align}\label{LNQ001}
a_{\alpha\alpha}=&\mathcal{L}_{\alpha}\left(\frac{\pi}{2\kappa}\right)^{\frac{n-1}{2}}\rho_{n}(\varepsilon)+\mathcal{G}_{n}^{\ast\alpha}+O(1)\max\{\varepsilon^{\frac{1}{12}},\varrho_{\alpha,n}(\varepsilon)\},
\end{align}
where
\begin{align}\label{NZWA001}
\mathcal{G}_{n}^{\ast\alpha}:=\mathcal{L}_{\alpha}\mathcal{K}_{n}+\mathcal{M}_{n}^{\ast\alpha},\quad\alpha=1,2,...,n,\,n=2,3.
\end{align}
Claim that for every $\alpha=1,...,n-1$, $n\geq2$ or $\alpha=n$, $n\geq3$, the geometry constant $\mathcal{G}_{n}^{\ast\alpha}$ defined by \eqref{NZWA001} depends not on the length parameter $R_{0}$ of the thin gap. Otherwise, assume that there exist two $\varepsilon$-independent constants $\mathcal{G}_{n}^{\ast\alpha}(r_{1})$ and $\mathcal{G}_{n}^{\ast\alpha}(r_{2})$, $r_{i}>0,\,i=1,2,\,r_{1}\neq r_{2}$ such that \eqref{LNQ001} holds. Hence we obtain
\begin{align*}
\mathcal{G}_{n}^{\ast\alpha}(r_{1})-\mathcal{G}_{n}^{\ast\alpha}(r_{2})=O(1)\max\{\varepsilon^{\frac{1}{12}},\varrho_{\alpha,n}(\varepsilon)\}.
\end{align*}
This implies that $\mathcal{G}_{n}^{\ast\alpha}(r_{1})=\mathcal{G}_{n}^{\ast\alpha}(r_{2})$.

On the other hand, if $n>3$, then
\begin{align}\label{LNQ002}
a_{\alpha\alpha}=&\mathcal{L}_{\alpha}\left(\int_{|x'|<R_{0}}\frac{1}{2h(x')}-\int_{|x'|<\varepsilon^{\bar{\theta}}}\frac{\varepsilon}{2h(x')(\varepsilon+2h(x'))}\right)\notag\\
&+\mathcal{M}_{n}^{\ast\alpha}+O(1)\max\{\varepsilon^{\frac{1}{12}},\varrho_{\alpha,n}(\varepsilon)\}\notag\\
=&\mathcal{L}_{\alpha}\int_{\Omega_{R_{0}}^{\ast}}|\partial_{x_{n}}\bar{\mathbf{u}}_{1}^{\ast\alpha}|^{2}+\mathcal{M}_{n}^{\ast\alpha}+O(1)\varepsilon^{\min\{\frac{1}{12},\frac{n-3}{24}\}}\notag\\
=&a_{11}^{\ast\alpha\alpha}+O(1)\varepsilon^{\min\{\frac{1}{12},\frac{n-3}{24}\}};
\end{align}

$(\rm{ii})$ if $\alpha=n+1,...,2n-1$, then
\begin{align}\label{QZH002}
a_{11}^{\alpha\alpha}=&\mathcal{L}_{\alpha}\left(\int_{\varepsilon^{\bar{\theta}}<|x'|<R_{0}}\frac{x_{\alpha-n}^{2}}{2h(x')}+\int_{|x'|<\varepsilon^{\bar{\theta}}}\frac{x_{\alpha-n}^{2}}{\varepsilon+2h(x')}\right)+\mathcal{M}_{n}^{\ast\alpha}+O(1)\varepsilon^{\min\{\frac{1}{12},\frac{n}{24}\}}\notag\\
=&\mathcal{L}_{\alpha}\left(\int_{|x'|<R_{0}}\frac{x_{\alpha-n}^{2}}{2h(x')}-\int_{|x'|<\varepsilon^{\bar{\theta}}}\frac{\varepsilon x_{\alpha-n}^{2}}{2h(x')(\varepsilon+2h(x'))}\right)\notag\\
&+\mathcal{M}_{n}^{\ast\alpha}+O(1)\varepsilon^{\min\{\frac{1}{12},\frac{n}{24}\}}\notag\\
=&\mathcal{L}_{\alpha}\int_{\Omega_{R_{0}}^{\ast}}|x_{\alpha-n}\partial_{x_{n}}\bar{v}^{\ast}|^{2}+\mathcal{M}_{n}^{\ast\alpha}+O(1)\varepsilon^{\min\{\frac{1}{12},\frac{n-1}{24}\}}\notag\\
=&a_{11}^{\ast\alpha\alpha}+O(1)\varepsilon^{\min\{\frac{1}{12},\frac{n-1}{24}\}},
\end{align}
and, for $\alpha=2n,...,\frac{n(n+1)}{2},\,n\geq3$,
\begin{align*}
a_{11}^{\alpha\alpha}=&\frac{\mathcal{L}_{\alpha}}{2}\left(\int_{\varepsilon^{\bar{\theta}}<|x'|<R_{0}}\frac{x_{i_{\alpha}}^{2}+x_{j_{\alpha}}^{2}}{2h(x')}+\int_{|x'|<\varepsilon^{\bar{\theta}}}\frac{x_{i_{\alpha}}^{2}+x_{j_{\alpha}}^{2}}{\varepsilon+2h(x')}\right)\notag\\
&+\mathcal{M}_{n}^{\ast\alpha}+O(1)\varepsilon^{\min\{\frac{1}{12},\frac{n}{24}\}}\\
=&\frac{\mathcal{L}_{\alpha}}{2}\int_{|x'|<R_{0}}\frac{x_{i_{\alpha}}^{2}+x_{j_{\alpha}}^{2}}{2h(x')}+\mathcal{M}_{n}^{\ast\alpha}+O(1)\varepsilon^{\min\{\frac{1}{12},\frac{n-1}{24}\}}\\
=&\frac{\mathcal{L}_{\alpha}}{2}\int_{\Omega_{R_{0}}^{\ast}}(x_{i_{\alpha}}^{2}+x_{j_{\alpha}}^{2})|\partial_{x_{n}}\bar{v}^{\ast}|^{2}+\mathcal{M}_{n}^{\ast\alpha}+O(1)\varepsilon^{\min\{\frac{1}{12},\frac{n-1}{24}\}}\\
=&a_{11}^{\ast\alpha\alpha}+O(1)\varepsilon^{\min\{\frac{1}{12},\frac{n-1}{24}\}}.
\end{align*}
A combination of \eqref{LNQ001}--\eqref{QZH002} leads to that \eqref{LMC}--\eqref{LMC1} hold.

\noindent{\bf Step 2.} Proofs of \eqref{M001}--\eqref{M005}. By symmetry, it suffices to consider the case when $\alpha<\beta$. Analogously as before, for $\alpha,\beta=1,2,...,\frac{n(n+1)}{2}$, $\alpha\neq\beta,$ we split $a_{11}^{\alpha\beta}$ as follows:
\begin{align*}
a_{11}^{\alpha\beta}=&\int_{\Omega\setminus\Omega_{R_{0}}}(2\mu e(\mathbf{u}_{1}^{\alpha}),e(\mathbf{u}_{1}^{\beta}))+\int_{\Omega_{R_{0}}\setminus\Omega_{\varepsilon^{\bar{\theta}}}}(2\mu e(\mathbf{u}_{1}^{\alpha}),e(\mathbf{u}_{1}^{\beta}))+\int_{\Omega_{\varepsilon^{\bar{\theta}}}}(2\mu e(\mathbf{u}_{1}^{\alpha}),e(\mathbf{u}_{1}^{\beta}))\nonumber\\
=&:\mathrm{I}+\mathrm{II}+\mathrm{III},
\end{align*}
where $\bar{\theta}=\frac{1}{24}$. In exactly the same way as in \eqref{KKAA123}, we have
\begin{align}\label{KKAA1233333}
\mathrm{I}=&\int_{D\setminus(D_{1}\cup D_{1}^{\ast}\cup D_{2}\cup D_{2}^{\ast}\cup\Omega_{R_{0}})}(2\mu e(\mathbf{u}_{1}^{\alpha}),e(\mathbf{u}_{1}^{\beta}))+O(1)\varepsilon\notag\\
=&\int_{D\setminus(D_{1}\cup D_{1}^{\ast}\cup D_{2}\cup D_{2}^{\ast}\Omega_{R_{0}})}\big[(2\mu e(\mathbf{u}_{1}^{\ast\alpha}),e(\mathbf{u}_{1}^{\ast\beta}))+(2\mu e(\mathbf{u}_{1}^{\alpha}-\mathbf{u}_{1}^{\ast\alpha}),e(\mathbf{u}_{1}^{\beta}-\mathbf{u}_{1}^{\ast\beta}))\big]\notag\\
&+\int_{D\setminus(D_{1}\cup D_{1}^{\ast}\cup D_{2}\cup D_{2}^{\ast}\cup\Omega_{R_{0}})}\big[(2\mu e(\mathbf{u}_{1}^{\ast\alpha}),e(\mathbf{u}_{1}^{\beta}-\mathbf{u}_{1}^{\ast\beta}))+(2\mu e(\mathbf{u}_{1}^{\alpha}-\mathbf{u}_{1}^{\ast\alpha}),e(\mathbf{u}_{1}^{\ast\beta}))\big]\notag\\
=&\int_{\Omega^{\ast}\setminus\Omega^{\ast}_{R_{0}}}(2\mu e(\mathbf{u}_{1}^{\ast\alpha}),e(\mathbf{u}_{1}^{\ast\beta}))+O(1)\varepsilon^{\frac{1}{12}}.
\end{align}

With regard to $\mathrm{II}$, it can be further decomposed as follows:
\begin{align*}
\mathrm{II}_{1}=&\int_{(\Omega_{R_{0}}\setminus\Omega_{\varepsilon^{\bar{\theta}}})\setminus(\Omega^{\ast}_{R_{0}}\setminus\Omega^{\ast}_{\varepsilon^{\bar{\theta}}})}(2\mu e(\mathbf{u}_{1}^{\alpha}),e(\mathbf{u}_{1}^{\beta}))+\int_{\Omega^{\ast}_{R_{0}}\setminus\Omega^{\ast}_{\varepsilon^{\bar{\theta}}}}(2\mu e(\mathbf{u}_{1}^{\ast\alpha}),e(\mathbf{u}_{1}^{\beta}-\mathbf{u}_{1}^{\ast\beta}))\notag\\
&+\int_{\Omega^{\ast}_{R_{0}}\setminus\Omega^{\ast}_{\varepsilon^{\bar{\theta}}}}(2\mu e(\mathbf{u}_{1}^{\alpha}-\mathbf{u}_{1}^{\ast\alpha}),e(\mathbf{u}_{1}^{\ast\beta}))+\int_{\Omega^{\ast}_{R_{0}}\setminus\Omega^{\ast}_{\varepsilon^{\bar{\theta}}}}(2\mu e(\mathbf{u}_{1}^{\alpha}-\mathbf{u}_{1}^{\ast\alpha}),e(\mathbf{u}_{1}^{\beta}-\mathbf{u}_{1}^{\ast\beta})),\\
\mathrm{II}_{2}=&\int_{\Omega^{\ast}_{R_{0}}\setminus\Omega^{\ast}_{\varepsilon^{\bar{\theta}}}}(2\mu e(\mathbf{u}_{1}^{\ast\alpha}),e(\mathbf{u}_{1}^{\ast\beta})).
\end{align*}
Observe that the volume of $(\Omega_{R_{0}}\setminus\Omega_{\varepsilon^{\bar{\theta}}})\setminus(\Omega^{\ast}_{R_{0}}\setminus\Omega^{\ast}_{\varepsilon^{\bar{\theta}}})$ is of order $O(\varepsilon)$, we deduce from \eqref{Le2.025} and \eqref{con035} that
\begin{align}\label{con036}
\mathrm{II}_{1}=O(1)\varepsilon^{\frac{1}{12}}.
\end{align}
We proceed to calculate $\mathrm{II}_{2}$ and divide into two cases as follows:

{\bf Case 1.} In the case of $n=2$, $\alpha=1,\beta=2$, we deduce from \eqref{GZIJ} that
\begin{align}\label{PAHN}
\mathrm{II}_{2}=&\int_{\Omega_{R_{0}}^{\ast}\setminus\Omega^{\ast}_{\varepsilon^{\bar{\theta}}}}(2\mu e(\bar{\mathbf{u}}_{1}^{\ast1}),e(\bar{\mathbf{u}}_{1}^{\ast 2}))+\int_{\Omega_{R_{0}}^{\ast}\setminus\Omega^{\ast}_{\varepsilon^{\bar{\theta}}}}(2\mu e(\mathbf{u}_{1}^{\ast1}-\bar{\mathbf{u}}_{1}^{\ast1}),e(\mathbf{u}_{1}^{\ast 2}-\bar{\mathbf{u}}_{1}^{\ast2}))\notag\\
&+\int_{\Omega_{R_{0}}^{\ast}\setminus\Omega^{\ast}_{\varepsilon^{\bar{\theta}}}}(2\mu e(\mathbf{u}_{1}^{\ast1}-\bar{\mathbf{u}}_{1}^{\ast1}),e(\bar{\mathbf{u}}_{1}^{\ast2}))+\int_{\Omega_{R_{0}}^{\ast}\setminus\Omega^{\ast}_{\varepsilon^{\bar{\theta}}}}(2\mu e(\bar{\mathbf{u}}_{1}^{\ast1}),e(\mathbf{u}_{1}^{\ast2}-\bar{\mathbf{u}}_{1}^{\ast2}))\notag\\
=&\int_{\Omega_{R_{0}}^{\ast}\setminus\Omega^{\ast}_{\varepsilon^{\bar{\theta}}}}\mu\partial_{x_{1}}\bar{v}^{\ast}\partial_{x_{2}}\bar{v}^{\ast}+\int_{\Omega_{R_{0}}^{\ast}\setminus\Omega^{\ast}_{\varepsilon^{\bar{\theta}}}}(2\mu e(\bar{\mathbf{u}}_{1}^{\ast1}),e(\mathbf{u}_{1}^{\ast2}-\bar{\mathbf{u}}_{1}^{\ast2}))+O(1)\notag\\
=&O(1)|\ln\varepsilon|;
\end{align}

{\bf Case 2.} Consider the case when $(\alpha,\beta)\in\{(\alpha,\beta)|\,\alpha<\beta,\,\alpha,\beta=1,2,...,\frac{n(n+1)}{2},\,n\geq2\}\setminus\{(\alpha,\beta)|\,\alpha<\beta,\,\alpha,\beta=1,2,...,n,\,n=2\}.$ Observe that
\begin{align}\label{FAZWQ001}
(2\mu e(\bar{\mathbf{u}}_{1}^{\ast\alpha}),e(\bar{\mathbf{u}}_{1}^{\ast\beta}))=&(2\mu e(\boldsymbol{\psi}_{\alpha}\bar{v}^{\ast}),e(\boldsymbol{\psi}_{\beta}\bar{v}^{\ast}))+(2\mu e(\boldsymbol{\mathcal{F}}_{\alpha}^{\ast}),e(\boldsymbol{\psi}_{\beta}\bar{v}^{\ast}))\notag\\
&+(2\mu e(\boldsymbol{\psi}_{\alpha}\bar{v}^{\ast}),e(\boldsymbol{\mathcal{F}}_{\beta}^{\ast}))+(2\mu e(\boldsymbol{\mathcal{F}}_{\alpha}^{\ast}),e(\boldsymbol{\mathcal{F}}_{\beta}^{\ast})).
\end{align}
It then follows from \eqref{GZIJ} again that
\begin{align}\label{QKL}
&\mathrm{II}_{2}-\int_{\Omega^{\ast}_{R_{0}}}(2\mu e(\mathbf{u}_{1}^{\ast\alpha}),e(\mathbf{u}_{1}^{\ast\beta}))=-\int_{\Omega^{\ast}_{\varepsilon^{\bar{\theta}}}}(2\mu e(\mathbf{u}_{1}^{\ast\alpha}),e(\mathbf{u}_{1}^{\ast\beta}))\notag\\
&=\int_{\Omega^{\ast}_{\varepsilon^{\bar{\theta}}}}\big[(2\mu e(\bar{\mathbf{u}}_{1}^{\ast\alpha}),e(\bar{\mathbf{u}}_{1}^{\ast\beta}))+(2\mu e(\mathbf{u}_{1}^{\ast\alpha}-\bar{\mathbf{u}}_{1}^{\ast\alpha}),e(\mathbf{u}_{1}^{\ast\beta}-\bar{\mathbf{u}}_{1}^{\ast\beta}))\notag\\
&\quad\quad\quad+(2\mu e(\bar{\mathbf{u}}_{1}^{\ast\alpha}),e(\mathbf{u}_{1}^{\ast\beta}-\bar{\mathbf{u}}_{1}^{\ast\beta}))+(2\mu e(\mathbf{u}_{1}^{\ast\alpha}-\bar{\mathbf{u}}_{1}^{\ast\alpha}),e(\bar{\mathbf{u}}_{1}^{\ast\beta}))\big]\notag\\
&=\int_{\Omega^{\ast}_{\varepsilon^{\bar{\theta}}}}(2\mu e(\boldsymbol{\psi}_{\alpha}\bar{v}^{\ast}),e(\boldsymbol{\psi}_{\beta}\bar{v}^{\ast}))\notag\\
&+O(1)
\begin{cases}
\varepsilon^{(n-2)\bar{\theta}},&\alpha,\beta=1,2,...,n,\,n\geq3,\,\alpha<\beta,\\
\varepsilon^{(n-1)\bar{\theta}},&\alpha=1,2,...,n,\,\beta=n+1,...,\frac{n(n+1)}{2},\,n\geq2,\,\alpha<\beta,\\
\varepsilon^{n\bar{\theta}},&\alpha,\beta=n+1,...,\frac{n(n+1)}{2},\,n\geq3,\,\alpha<\beta.
\end{cases}
\end{align}
Write $E_{\alpha\beta}(\bar{v}^{\ast})=(2\mu e(\boldsymbol{\psi}_{\alpha}\bar{v}^{\ast}),e(\boldsymbol{\psi}_{\beta}\bar{v}^{\ast}))$. By a direct calculation, we obtain

$(i)$ for $\alpha,\beta=1,2,...,n,$ $\alpha<\beta$,
\begin{align}\label{ZH0000}
E_{\alpha\beta}(\bar{v}^{\ast})=\mu\partial_{x_{\alpha}}\bar{v}^{\ast}\partial_{x_{\beta}}\bar{v}^{\ast};
\end{align}

$(ii)$ for $\alpha=1,2,...,n$, $\beta=n+1,...,\frac{n(n+1)}{2}$, there exist two integers $1\leq i_{\beta}<j_{\beta}\leq n$ such that
$\boldsymbol{\psi}_{\beta}\bar{v}^{\ast}=\bar{v}^{\ast}(x_{j_{\beta}}\mathbf{e}_{x_{i_{\beta}}}-x_{i_{\beta}}\mathbf{e}_{x_{j_{\beta}}})$. If $i_{\beta}\neq\alpha,\,j_{\beta}\neq\alpha$,
\begin{align}\label{ZH000}
E_{\alpha\beta}(\bar{v}^{\ast})=\mu\partial_{x_{\alpha}}\bar{v}^{\ast}(x_{j_{\beta}}\partial_{i_{\beta}}\bar{v}^{\ast}-x_{i_{\beta}}\partial_{x_{j_{\beta}}}\bar{v}^{\ast}),
\end{align}
and if $i_{\beta}=\alpha,\,j_{\beta}\neq\alpha$, then
\begin{align}\label{ZH001}
E_{\alpha\beta}(\bar{v}^{\ast})=\mu x_{j_{\beta}}\sum^{n}_{k=1}(\partial_{x_{k}}\bar{v}^{\ast})^{2}+\mu\partial_{x_{\alpha}}\bar{v}^{\ast}(x_{j_{\beta}}\partial_{i_{\beta}}\bar{v}^{\ast}-x_{i_{\beta}}\partial_{x_{j_{\beta}}}\bar{v}^{\ast}),
\end{align}
and if $i_{\beta}\neq\alpha,\,j_{\beta}=\alpha$, then
\begin{align}\label{ZH002}
E_{\alpha\beta}(\bar{v}^{\ast})=&-\mu x_{i_{\beta}}\sum^{n}_{k=1}(\partial_{x_{k}}\bar{v}^{\ast})^{2}+\mu\partial_{x_{\alpha}}\bar{v}^{\ast}(x_{j_{\beta}}\partial_{i_{\beta}}\bar{v}^{\ast}-x_{i_{\beta}}\partial_{x_{j_{\beta}}}\bar{v}^{\ast});
\end{align}

$(iii)$ for $\alpha,\beta=n+1,...,\frac{n(n+1)}{2}$, $\alpha<\beta$, there exist four integers $1\leq i_{\alpha}<j_{\alpha}\leq n$ and $1\leq i_{\beta}<j_{\beta}\leq n$ such that $\boldsymbol{\psi}_{\alpha}\bar{v}^{\ast}=\bar{v}^{\ast}(x_{j_{\alpha}}\mathbf{e}_{x_{i_{\alpha}}}-x_{i_{\alpha}}\mathbf{e}_{x_{j_{\alpha}}})$
and
$\boldsymbol{\psi}_{\beta}\bar{v}^{\ast}=\bar{v}^{\ast}(x_{j_{\beta}}\mathbf{e}_{x_{i_{\beta}}}-x_{i_{\beta}}\mathbf{e}_{x_{j_{\beta}}})$. Since $\alpha<\beta$, then $j_{\beta}\leq j_{\alpha}$. If $i_{\alpha}\neq i_{\beta},\,j_{\alpha}\neq j_{\beta},\,i_{\alpha}\neq j_{\beta}$, then
\begin{align}\label{ZH003}
E_{\alpha\beta}(\bar{v}^{\ast})=\mu(x_{j_{\alpha}}\partial_{x_{i_{\alpha}}}\bar{v}^{\ast}-x_{i_{\alpha}}\partial_{x_{j_{\alpha}}}\bar{v}^{\ast})(x_{j_{\beta}}\partial_{x_{i_{\beta}}}\bar{v}^{\ast}-x_{i_{\beta}}\partial_{x_{j_{\beta}}}\bar{v}^{\ast}),
\end{align}
and if $i_{\alpha}\neq i_{\beta},\,j_{\alpha}=j_{\beta}$, then
\begin{align}\label{ZH005}
E_{\alpha\beta}(\bar{v}^{\ast})=&\mu x_{i_{\alpha}}x_{i_{\beta}}\sum^{n}_{k=1}(\partial_{x_{k}}\bar{v}^{\ast})^{2}\notag\\
&+\mu(x_{j_{\alpha}}\partial_{x_{i_{\alpha}}}\bar{v}^{\ast}-x_{i_{\alpha}}\partial_{x_{j_{\alpha}}}\bar{v}^{\ast})(x_{j_{\beta}}\partial_{x_{i_{\beta}}}\bar{v}^{\ast}-x_{i_{\beta}}\partial_{x_{j_{\beta}}}\bar{v}^{\ast}),
\end{align}
and if $i_{\alpha}=i_{\beta},\,j_{\alpha}\neq j_{\beta}$, then
\begin{align}\label{ZH004}
E_{\alpha\beta}(\bar{v}^{\ast})=&\mu x_{j_{\alpha}}x_{j_{\beta}}\sum^{n}_{k=1}(\partial_{x_{k}}\bar{v}^{\ast})^{2}\notag\\
&+\mu(x_{j_{\alpha}}\partial_{x_{i_{\alpha}}}\bar{v}^{\ast}-x_{i_{\alpha}}\partial_{x_{j_{\alpha}}}\bar{v}^{\ast})(x_{j_{\beta}}\partial_{x_{i_{\beta}}}\bar{v}^{\ast}-x_{i_{\beta}}\partial_{x_{j_{\beta}}}\bar{v}^{\ast}),
\end{align}
and if $i_{\beta}<j_{\beta}=i_{\alpha}<j_{\alpha}$, then
\begin{align}\label{ZH006}
E_{\alpha\beta}(\bar{v}^{\ast})=&-\mu x_{i_{\beta}}x_{j_{\alpha}}\sum^{n}_{k=1}(\partial_{x_{k}}\bar{v}^{\ast})^{2}\notag\\
&+\mu(x_{j_{\alpha}}\partial_{x_{i_{\alpha}}}\bar{v}^{\ast}-x_{i_{\alpha}}\partial_{x_{j_{\alpha}}}\bar{v}^{\ast})(x_{j_{\beta}}\partial_{x_{i_{\beta}}}\bar{v}^{\ast}-x_{i_{\beta}}\partial_{x_{j_{\beta}}}\bar{v}^{\ast}).
\end{align}

Observe that
\begin{align*}
\int^{h(x')}_{-h(x')}x_{n}\,dx_{n}=0,\quad \mathrm{in}\; B'_{R_{0}},
\end{align*}
which, in combination with \eqref{QKL}--\eqref{ZH006}, the symmetry of integral domain and the parity of integrand, reads that
\begin{align}\label{AMQZ001}
&\mathrm{II}_{2}-\int_{\Omega^{\ast}_{R_{0}}}(2\mu e(\mathbf{u}_{1}^{\ast\alpha}),e(\mathbf{u}_{1}^{\ast\beta}))\notag\\
&=O(1)
\begin{cases}
\varepsilon^{(n-2)\bar{\theta}},&\alpha,\beta=1,2,...,n,\,n\geq3,\,\alpha<\beta,\\
\varepsilon^{(n-1)\bar{\theta}},&\alpha=1,2,...,n,\,\beta=n+1,...,\frac{n(n+1)}{2},\,n\geq2,\,\alpha<\beta,\\
\varepsilon^{n\bar{\theta}},&\alpha,\beta=n+1,...,\frac{n(n+1)}{2},\,n\geq3,\,\alpha<\beta.
\end{cases}
\end{align}
A combination of \eqref{con036}--\eqref{PAHN} and \eqref{AMQZ001} shows that
\begin{align}\label{FATL001}
\mathrm{II}=&O(1)|\ln\varepsilon|,\quad n=2,\alpha=1,\beta=2,
\end{align}
and
\begin{align}\label{FATL002}
&\mathrm{II}-\int_{\Omega^{\ast}_{R_{0}}}(2\mu e(\mathbf{u}_{1}^{\ast\alpha}),e(\mathbf{u}_{1}^{\ast\beta}))\notag\\
&=O(1)
\begin{cases}
\varepsilon^{\min\{\frac{1}{12},\frac{n-2}{24}\}},&\alpha,\beta=1,2,...,n,\,n\geq3,\,\alpha<\beta,\\
\varepsilon^{\min\{\frac{1}{12},\frac{n-1}{24}\}},&\alpha=1,2,...,n,\,\beta=n+1,...,\frac{n(n+1)}{2},\,n\geq2,\\
\varepsilon^{\min\{\frac{1}{12},\frac{n}{24}\}},&\alpha,\beta=n+1,...,\frac{n(n+1)}{2},\,n\geq3,\,\alpha<\beta.
\end{cases}
\end{align}

Analogously, using \eqref{Le2.025} and applying \eqref{FAZWQ001}--\eqref{ZH006} with $\bar{v}^{\ast}$ replaced by $\bar{v}$, we have
\begin{align}\label{KMAZE001}
\mathrm{III}=&\int_{\Omega_{\varepsilon^{\bar{\theta}}}}[(2\mu e(\boldsymbol{\psi}_{\alpha}\bar{v}),e(\boldsymbol{\psi}_{\beta}\bar{v}))+(2\mu e(\boldsymbol{\psi}_{\alpha}\bar{v}),e(\boldsymbol{\mathcal{F}}_{\beta}))]\notag\\
&+\int_{\Omega_{\varepsilon^{\bar{\theta}}}}[(2\mu e(\boldsymbol{\mathcal{F}}_{\alpha}),e(\boldsymbol{\psi}_{\beta}\bar{v}))+(2\mu e(\boldsymbol{\mathcal{F}}_{\alpha}),e(\boldsymbol{\mathcal{F}}_{\beta}))]\notag\\
=&O(1)
\begin{cases}
|\ln\varepsilon|,&\alpha=1,\beta=2,\,n=2,\\
\varepsilon^{\frac{n-2}{24}},&\alpha,\beta=1,2,...,n,\,n\geq3,\,\alpha<\beta,\\
\varepsilon^{\frac{n-1}{24}},&\alpha=1,2,...,n,\,\beta=n+1,...,\frac{n(n+1)}{2},\,n\geq2,\\
\varepsilon^{\frac{n}{24}},&\alpha,\beta=n+1,...,\frac{n(n+1)}{2},\,n\geq3,\,\alpha<\beta.
\end{cases}
\end{align}
Consequently, combining \eqref{KKAA1233333} and \eqref{FATL001}--\eqref{KMAZE001}, we deduce
\begin{align*}
a_{11}^{12}=O(1)|\ln\varepsilon|,\quad n=2,
\end{align*}
and
\begin{align*}
&a_{11}^{\alpha\beta}-a_{11}^{\ast\alpha\beta}\notag\\
&=O(1)
\begin{cases}
\varepsilon^{\min\{\frac{1}{12},\frac{n-2}{24}\}},&\alpha,\beta=1,2,...,n,\,n\geq3,\,\alpha<\beta,\\
\varepsilon^{\min\{\frac{1}{12},\frac{n-1}{24}\}},&\alpha=1,2,...,n,\,\beta=n+1,...,\frac{n(n+1)}{2},\,n\geq2,\\
\varepsilon^{\min\{\frac{1}{12},\frac{n}{24}\}},&\alpha,\beta=n+1,...,\frac{n(n+1)}{2},\,n\geq3,\,\alpha<\beta.
\end{cases}
\end{align*}
That is, \eqref{M001}--\eqref{M005} are proved.

\noindent{\bf Step 3.} Proof of \eqref{AZQ001}. By linearity, we see that for $\alpha=1,2,...,\frac{n(n+1)}{2}$, $\mathbf{u}_{1}^{\alpha}+\mathbf{u}_{2}^{\alpha}-\mathbf{u}_{1}^{\ast\alpha}-\mathbf{u}_{2}^{\ast\alpha}$ and $p_{1}^{\alpha}+p_{2}^{\alpha}-p_{1}^{\ast\alpha}-p_{2}^{\ast\alpha}$ verify
\begin{align*}
\begin{cases}
\nabla\cdot\sigma\big[\sum\limits^{2}_{i=1}\mathbf{u}_{i}^{\alpha}-\sum\limits^{2}_{i=1}\mathbf{u}_{i}^{\ast\alpha},\sum\limits^{2}_{i=1}p_{i}^{\alpha}-\sum\limits^{2}_{i=1}p_{i}^{\ast\alpha}\big]=0,&\mathrm{in}\;\,D\setminus(\overline{D_{1}\cup D_{2}\cup D_{1}^{\ast}\cup D_{2}^{\ast}}),\\
\nabla\cdot(\mathbf{u}_{1}^{\alpha}+\mathbf{u}_{2}^{\alpha}-\mathbf{u}_{1}^{\ast\alpha}-\mathbf{u}_{2}^{\ast\alpha})=0,&\mathrm{in}\;\,D\setminus(\overline{D_{1}\cup D_{2}\cup D_{1}^{\ast}\cup D_{2}^{\ast}}),\\
\mathbf{u}_{1}^{\alpha}+\mathbf{u}_{2}^{\alpha}-\mathbf{u}_{1}^{\ast\alpha}-\mathbf{u}_{2}^{\ast\alpha}=\boldsymbol{\psi}_{\alpha}-\mathbf{u}_{1}^{\ast\alpha}-\mathbf{u}_{2}^{\ast\alpha},&\mathrm{on}\;\,(\partial D_{1}\setminus D_{1}^{\ast})\cup(\partial D_{2}\setminus D_{2}^{\ast}),\\
\mathbf{u}_{1}^{\alpha}+\mathbf{u}_{2}^{\alpha}-\mathbf{u}_{1}^{\ast\alpha}-\mathbf{u}_{2}^{\ast\alpha}=\mathbf{u}_{1}^{\alpha}+\mathbf{u}_{2}^{\alpha}-\boldsymbol{\psi}_{\alpha},&\mathrm{on}\;\,\bigcup\limits^{2}_{i=1}(\partial D_{i}^{\ast}\setminus(D_{i}\cup\{0\})),\\
\mathbf{u}_{1}^{\alpha}+\mathbf{u}_{2}^{\alpha}-\mathbf{u}_{1}^{\ast\alpha}-\mathbf{u}_{2}^{\ast\alpha}=0,&\mathrm{on}\;\,\partial D.
\end{cases}
\end{align*}
Similarly as before, from the standard interior and boundary estimates for Stokes equation, it follows that for $x\in\partial D_{i}\setminus D_{i}^{\ast}$, $i=1,2,$
\begin{align*}
&|(\mathbf{u}_{1}^{\alpha}+\mathbf{u}_{2}^{\alpha}-\mathbf{u}_{1}^{\ast\alpha}-\mathbf{u}_{2}^{\ast\alpha})(x',x_{n})|\notag\\
&=|(\mathbf{u}_{1}^{\ast\alpha}+\mathbf{u}_{2}^{\ast\alpha})(x',x_{n}+(-1)^{i}\varepsilon/2)-(\mathbf{u}_{1}^{\ast\alpha}+\mathbf{u}_{2}^{\ast\alpha})(x',x_{n})|\leq C\varepsilon.
\end{align*}
Utilizing Corollary \ref{coro00z}, we obtain that for $x\in\partial D_{i}^{\ast}\setminus(D_{i}\cup\mathcal{C}_{\sqrt{\varepsilon}})$, $i=1,2,$
\begin{align*}
&|(\mathbf{u}_{1}^{\alpha}+\mathbf{u}_{2}^{\alpha}-\mathbf{u}_{1}^{\ast\alpha}-\mathbf{u}_{2}^{\ast\alpha})(x',x_{n})|\notag\\
&=|(\mathbf{u}_{1}^{\alpha}+\mathbf{u}_{2}^{\alpha})(x',x_{n})-(\mathbf{u}_{1}^{\alpha}+\mathbf{u}_{2}^{\alpha})(x',x_{n}-(-1)^{i}\varepsilon/2)|\leq C\varepsilon,
\end{align*}
which, together with Corollary \ref{coro00z} again, shows that for $x\in\Omega_{R_{0}}^{\ast}\cap\{|x'|=\sqrt{\varepsilon}\}$,
\begin{align*}
&|(\mathbf{u}_{1}^{\alpha}+\mathbf{u}_{2}^{\alpha}-\mathbf{u}_{1}^{\ast\alpha}-\mathbf{u}_{2}^{\ast\alpha})(x',x_{n})|\notag\\
&\leq|(\mathbf{u}_{1}^{\alpha}+\mathbf{u}_{2}^{\alpha}-\mathbf{u}_{1}^{\ast\alpha}-\mathbf{u}_{2}^{\ast\alpha})(x',x_{n})-(\mathbf{u}_{1}^{\alpha}+\mathbf{u}_{2}^{\alpha}-\mathbf{u}_{1}^{\ast\alpha}-\mathbf{u}_{2}^{\ast\alpha})(x',h(x'))|\notag\\ &\quad+|(\mathbf{u}_{1}^{\alpha}+\mathbf{u}_{2}^{\alpha}-\mathbf{u}_{1}^{\ast\alpha}-\mathbf{u}_{2}^{\ast\alpha})(x',h(x'))|\leq C\varepsilon.
\end{align*}
Combining these above facts, we have
\begin{align}\label{MIH01}
|\mathbf{u}_{1}^{\alpha}+\mathbf{u}_{2}^{\alpha}-\mathbf{u}_{1}^{\ast\alpha}-\mathbf{u}_{2}^{\ast\alpha}|\leq C\varepsilon,\quad\;\,\mathrm{on}\;\,\partial \big(D\setminus\big(\overline{D_{1}\cup D_{1}^{\ast}\cup D_{2}\cup D_{2}^{\ast}\cup\mathcal{C}_{\varepsilon^{\frac{1}{2}}}}\big)\big).
\end{align}
Similar to \eqref{con035}, a consequence of \eqref{MIH01}, the maximum modulus theorem, the interpolation inequality, the rescale argument and the standard estimates for Stokes flow yields that
\begin{align}\label{ZQWZW001}
|\nabla(\mathbf{u}_{1}^{\alpha}+\mathbf{u}_{2}^{\alpha}-\mathbf{u}_{1}^{\ast\alpha}-\mathbf{u}_{2}^{\ast\alpha})|\leq C\varepsilon^{\frac{1}{4}},\quad\mathrm{in}\;\,D\setminus\big(\overline{D_{1}\cup D_{1}^{\ast}\cup D_{2}\cup D_{2}^{\ast}\cup\mathcal{C}_{\varepsilon^{\frac{1}{8}}}}\big).
\end{align}

Set $\tilde{\theta}=\frac{1}{8}$ and split $\sum\limits^{2}_{i=1}a_{i1}^{\alpha\beta}$ as follows:
\begin{align*}
\mathrm{I}=&\int_{\Omega_{\varepsilon^{\tilde{\theta}}}}(2\mu e(\mathbf{u}_{1}^{\alpha}+\mathbf{u}_{2}^{\alpha}),e(\mathbf{u}_{1}^{\beta})),\notag\\
\mathrm{II}=&\int_{\Omega\setminus\Omega_{R_{0}}}(2\mu e(\mathbf{u}_{1}^{\alpha}+\mathbf{u}_{2}^{\alpha}),e(\mathbf{u}_{1}^{\beta})),\notag\\
\mathrm{III}=&\int_{\Omega_{R_{0}}\setminus\Omega_{\varepsilon^{\tilde{\theta}}}}(2\mu e(\mathbf{u}_{1}^{\alpha}+\mathbf{u}_{2}^{\alpha}),e(\mathbf{u}_{1}^{\beta})).
\end{align*}
Since exponential function decays faster than power function, then we infer from Proposition \ref{thm86} and Corollary \ref{coro00z} that
\begin{align}\label{GAZ001}
|\mathrm{I}|\leq \int_{|x'|\leq \varepsilon^{\tilde{\theta}}}C(\varepsilon+|x'|^{2})^{1/2}\leq C\varepsilon^{n\tilde{\theta}}.
\end{align}
Analogous to \eqref{KKAA123}, it follows from \eqref{ZQWZW001} that
\begin{align}\label{GAZ0011}
\mathrm{II}=\int_{\Omega^{\ast}\setminus\Omega^{\ast}_{R_{0}}}(2\mu e(\mathbf{u}_{1}^{\ast\alpha}+\mathbf{u}_{2}^{\ast\alpha}),e(\mathbf{u}_{1}^{\ast\beta}))+O(1)\varepsilon^{\frac{1}{4}}.
\end{align}

As for the last term $\mathrm{III}$, we further split it as follows:
\begin{align*}
\mathrm{III}_{1}=&\int_{\Omega^{\ast}_{R_{0}}\setminus\Omega^{\ast}_{\varepsilon^{\tilde{\theta}}}}\sum^{2}_{i=1}\Big[(2\mu e(\mathbf{u}_{i}^{\alpha}-\mathbf{u}_{i}^{\ast\alpha}),e(\mathbf{u}_{1}^{\ast\beta}))+(2\mu e(\mathbf{u}_{i}^{\ast\alpha}),e(\mathbf{u}_{1}^{\beta}-\mathbf{u}_{1}^{\ast\beta}))\notag\\
&\quad\quad\quad\quad\quad\quad+(2\mu e(\mathbf{u}_{i}^{\alpha}-\mathbf{u}_{i}^{\ast\alpha}),e(\mathbf{u}_{1}^{\beta}-\mathbf{u}_{1}^{\ast\beta}))\Big],\\
\mathrm{III}_{2}=&\int_{(\Omega_{R_{0}}\setminus\Omega_{\varepsilon^{\tilde{\theta}}})\setminus(\Omega^{\ast}_{R_{0}}\setminus\Omega^{\ast}_{\varepsilon^{\tilde{\theta}}})}(2\mu e(\mathbf{u}_{1}^{\alpha}+\mathbf{u}_{2}^{\alpha}),e(\mathbf{u}_{1}^{\beta})),\\
\mathrm{III}_{3}=&\int_{\Omega^{\ast}_{R_{0}}\setminus\Omega^{\ast}_{\varepsilon^{\tilde{\theta}}}}(2\mu e(\mathbf{u}_{1}^{\ast\alpha}+\mathbf{u}_{2}^{\ast\alpha}),e(\mathbf{u}_{1}^{\ast\beta})).
\end{align*}
From \eqref{ZQWZW001}, we get
\begin{align}\label{GAP001}
|\mathrm{III}_{1}|\leq C\varepsilon^{\frac{1}{4}}.
\end{align}
Similarly as in \eqref{GAZ001}, we have
\begin{align}\label{GAZ00195}
|\mathrm{III}_{1}|\leq \int_{\varepsilon^{\tilde{\theta}}\leq |x'|\leq R_{0}}\frac{C\varepsilon(\varepsilon+|x'|^{2})^{1/2}}{|x'|^{2}}\leq C
\begin{cases}
\varepsilon|\ln\varepsilon|,&n=2,\\
\varepsilon,&n\geq3.
\end{cases}
\end{align}
With regard to $\mathrm{III}_{3}$, we have from Corollary \ref{coro00z} and \eqref{GZIJ} that
\begin{align}\label{HMGD001}
\mathrm{III}_{3}=&\int_{\Omega^{\ast}_{R_{0}}}(2\mu e(\mathbf{u}_{1}^{\ast\alpha}+\mathbf{u}_{2}^{\ast\alpha}),e(\mathbf{u}_{1}^{\ast\beta}))-\int_{\Omega^{\ast}_{\varepsilon^{\tilde{\theta}}}}(2\mu e(\mathbf{u}_{1}^{\ast\alpha}+\mathbf{u}_{2}^{\ast\alpha}),e(\mathbf{u}_{1}^{\ast\beta}))\notag\\
=&\int_{\Omega^{\ast}_{R_{0}}}(2\mu e(\mathbf{u}_{1}^{\ast\alpha}+\mathbf{u}_{2}^{\ast\alpha}),e(\mathbf{u}_{1}^{\ast\beta}))+O(1)\varepsilon^{n\tilde{\theta}}.
\end{align}
A combination of \eqref{GAP001}--\eqref{HMGD001} reads that
\begin{align}\label{RZMA001}
\mathrm{III}=&\int_{\Omega^{\ast}_{R_{0}}}(2\mu e(\mathbf{u}_{1}^{\ast\alpha}+\mathbf{u}_{2}^{\ast\alpha}),e(\mathbf{u}_{1}^{\ast\beta}))+O(1)\varepsilon^{\frac{1}{4}}.
\end{align}
It then follows from \eqref{GAZ001}--\eqref{GAZ0011} and \eqref{RZMA001} that
\begin{align*}
\sum\limits^{2}_{i=1}a_{i1}^{\alpha\beta}=\sum\limits^{2}_{i=1}a_{i1}^{\ast\alpha\beta}+O(1)\varepsilon^{\frac{1}{4}}.
\end{align*}
Analogously,
\begin{align*}
\sum\limits^{2}_{j=1}a_{1j}^{\alpha\beta}=\sum\limits^{2}_{j=1}a_{1j}^{*\alpha\beta}+O(\varepsilon^{\frac{1}{4}}).
\end{align*}
The proof is complete.

\end{proof}

In view of \eqref{AST123}, we know that for $\beta=1,2,...,\frac{n(n+1)}{2}$,
\begin{align}\label{GCT}
|\mathbf{u}_{1}^{\beta}-\mathbf{u}_{1}^{\ast\beta}|\leq C\varepsilon^{\frac{1}{3}},\quad\mathrm{in}\;D\setminus\big(\overline{D_{1}\cup D_{1}^{\ast}\cup D_{2}\cup D_{2}^{\ast}\cup\mathcal{C}_{\varepsilon^{\frac{1}{3}}}}\big).
\end{align}
Making use of the standard boundary estimates for Stokes equation with \eqref{GCT} and the fact that $\mathbf{u}_{1}^{\beta}-\mathbf{u}_{1}^{\ast\beta}=0$ on $\partial D$, we get
\begin{align*}
|\nabla(\mathbf{u}_{1}^{\beta}-\mathbf{u}_{1}^{\ast\beta})|+|p_{1}^{\beta}-p_{1}^{\ast\beta}|\leq C\varepsilon^{\frac{1}{3}},\quad\mathrm{on}\;\partial D,
\end{align*}
which yields that
\begin{align*}
|b_1^{\beta}-b_{1}^{\ast\beta}|\leq\left|\int_{\partial D}\boldsymbol{\varphi}\cdot\sigma[\mathbf{u}_{1}^{\beta}-\mathbf{u}_{1}^{\ast\beta},p_{1}^{\beta}-p_{1}^{\ast\beta}]\nu\right|\leq C\|\varphi\|_{C^{0}(\partial D)}\varepsilon^{\frac{1}{3}}.
\end{align*}
Similarly,
\begin{align*}
|b_{2}^{\beta}-b_{2}^{\ast\beta}|\leq C\|\varphi\|_{C^{0}(\partial D)}\varepsilon^{\frac{1}{3}}.
\end{align*}
Then we obtain
\begin{lemma}\label{KM323}
Assume as above. Then for a arbitrarily small $\varepsilon>0$,
\begin{align*}
b_{i}^{\beta}=b_{i}^{\ast\beta}+O(\varepsilon^{\frac{1}{3}}),\quad i=1,2,\;\beta=1,2,...,\frac{n(n+1)}{2},
\end{align*}
and thus
\begin{align*}
\sum\limits^{2}_{i=1}b_{i}^{\beta}=\sum\limits^{2}_{i=1}b_{i}^{\ast\beta}+O(\varepsilon^{\frac{1}{3}}),
\end{align*}
where $b_{i}^{\ast\beta}$ and $b_{i}^{\beta}$, $i=1,2,\beta=1,2,...,\frac{n(n+1)}{2}$ are, respectively, given by \eqref{LMZR} and \eqref{KAT001}.

\end{lemma}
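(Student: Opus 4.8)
The plan is to obtain both estimates as an essentially immediate consequence of the boundary-layer bound \eqref{AST123} already established in Lemma \ref{lemmabc}, combined with one integration-by-parts identity that moves $b_{i}^{\beta}$ onto the fixed outer boundary $\partial D$. First I would record the identity $b_{i}^{\beta}=\int_{\partial D}\boldsymbol{\varphi}\cdot\sigma[\mathbf{u}_{i}^{\beta},p_{i}^{\beta}]\nu$ (and the same one with stars), which is precisely the identity underlying \eqref{LMZR}: testing $\nabla\cdot\sigma[\mathbf{u}_{0},p_{0}]=0$ against $\mathbf{u}_{i}^{\beta}$ over $\Omega$, integrating by parts, using $\nabla\cdot\mathbf{u}_{i}^{\beta}=0$, the symmetry of the bilinear form $(\mathbf{v},\mathbf{w})\mapsto\int_{\Omega}(2\mu e(\mathbf{v}),e(\mathbf{w}))$, and the boundary conditions ($\mathbf{u}_{0}=0$ and $\mathbf{u}_{i}^{\beta}=\boldsymbol{\psi}_{\beta}$ on $\partial D_{i}$, $\mathbf{u}_{i}^{\beta}=0$ on $\partial D_{j}$ for $j\neq i$, and $\mathbf{u}_{0}=\boldsymbol{\varphi}$, $\mathbf{u}_{i}^{\beta}=0$ on $\partial D$) to collapse all boundary terms to the claimed one. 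Subtracting the two versions gives $b_{i}^{\beta}-b_{i}^{\ast\beta}=\int_{\partial D}\boldsymbol{\varphi}\cdot\sigma[\mathbf{u}_{i}^{\beta}-\mathbf{u}_{i}^{\ast\beta},p_{i}^{\beta}-p_{i}^{\ast\beta}]\nu$, which is meaningful because $\Omega$ and $\Omega^{\ast}$ coincide in a fixed neighbourhood of $\partial D$ (the inclusions stay a definite distance from $\partial D$). The additive constant in the pressures is harmless here: it contributes a multiple of $\int_{\partial D}\boldsymbol{\varphi}\cdot\nu$, which vanishes by \eqref{COM001}.

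Second I would upgrade the $C^{0}$ control of the velocity difference to control of the full stress on $\partial D$. By \eqref{AST123}, and by its analogue for $i=2$ obtained by running the same argument with the roles of $D_{1}$ and $D_{2}$ interchanged (the case $i=2$ being similar), one has $|\mathbf{u}_{i}^{\beta}-\mathbf{u}_{i}^{\ast\beta}|\le C\varepsilon^{1/3}$ on $D\setminus\left(\overline{D_{1}\cup D_{1}^{\ast}\cup D_{2}\cup D_{2}^{\ast}\cup\mathcal{C}_{\varepsilon^{1/3}}}\right)$, in particular in a fixed collar of $\partial D$. There $\mathbf{w}:=\mathbf{u}_{i}^{\beta}-\mathbf{u}_{i}^{\ast\beta}$ and $q:=p_{i}^{\beta}-p_{i}^{\ast\beta}$ solve a homogeneous Stokes system with $\mathbf{w}=0$ on $\partial D$, so the standard boundary $W^{2,p}$ and Schauder estimates for Stokes flow (together with Poincar\'{e}'s and Sobolev's inequalities, as already used for Proposition \ref{thm86}) upgrade the $L^{\infty}$ bound on $\mathbf{w}$ to $|\nabla\mathbf{w}|\le C\varepsilon^{1/3}$ and $|q-\bar q|\le C\varepsilon^{1/3}$ up to $\partial D$ for a suitable constant $\bar q$. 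Hence $|\sigma[\mathbf{w},q]\nu|\le C\varepsilon^{1/3}$ on $\partial D$ (the constant $\bar q$ being absorbed via \eqref{COM001}), and plugging this into the identity of the first step yields $|b_{i}^{\beta}-b_{i}^{\ast\beta}|\le C\|\boldsymbol{\varphi}\|_{C^{0}(\partial D)}\varepsilon^{1/3}$ for $i=1,2$; summing in $i$ gives $\sum_{i=1}^{2}b_{i}^{\beta}=\sum_{i=1}^{2}b_{i}^{\ast\beta}+O(\varepsilon^{1/3})$.

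I do not expect a genuine obstacle here. The only substantial analytic ingredient, the boundary-layer estimate \eqref{AST123}, has already been produced in the proof of Lemma \ref{lemmabc} via the iteration of Proposition \ref{thm86}, the maximum modulus theorem for Stokes systems, and an interpolation step. What remains is bookkeeping: verifying the duality identity, checking that $(\mathbf{u}_{i}^{\beta},p_{i}^{\beta})$ may legitimately be compared with $(\mathbf{u}_{i}^{\ast\beta},p_{i}^{\ast\beta})$ on a fixed collar of $\partial D$ (it may, since the two domains agree there), and disposing of the pressure's additive constant through the compatibility condition \eqref{COM001}.
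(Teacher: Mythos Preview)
Your proposal is correct and follows essentially the same route as the paper: invoke the $C^{0}$ bound \eqref{AST123} on $\mathbf{u}_{i}^{\beta}-\mathbf{u}_{i}^{\ast\beta}$ in a collar of $\partial D$, upgrade it to a bound on the full stress via standard boundary estimates for Stokes systems (using that the difference vanishes on $\partial D$), and then apply the duality identity $b_{i}^{\beta}=\int_{\partial D}\boldsymbol{\varphi}\cdot\sigma[\mathbf{u}_{i}^{\beta},p_{i}^{\beta}]\nu$ to conclude. Your explicit handling of the pressure's additive constant via \eqref{COM001} is a detail the paper leaves implicit, but the argument is otherwise the same.
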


A consequence of Lemmas \ref{lemmabc} and \ref{KM323} shows that
\begin{lemma}\label{COOO}
Assume as above. Let $C_{2}^{\alpha}$, $\alpha=1,2,...,\frac{n(n+1)}{2}$ be defined in \eqref{Decom}. Then for a sufficiently small $\varepsilon>0$,
\begin{align}\label{LGR001}
C_{2}^{\alpha}=&C_{\ast}^{\alpha}+O(r_{\varepsilon}),\quad\alpha=1,2,...,\frac{n(n+1)}{2},
\end{align}
where $C_{\ast}^{\alpha}$ is defined by \eqref{ZZWWWW}, $r_{\varepsilon}$ is defined in \eqref{JTD}.

\end{lemma}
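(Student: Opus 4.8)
\emph{Plan.} I would read off $C_2^\alpha$ directly from the linear system \eqref{PLA001} by Cramer's rule and then let $\varepsilon\to0$, feeding in the entrywise expansions of Lemmas \ref{lemmabc} and \ref{KM323}. The only obstruction to a naive limit is that, for $n=2,3$, the diagonal block $\mathbb{A}=(a_{11}^{\alpha\beta})$ has the $n$ entries $a_{11}^{\alpha\alpha}$, $\alpha=1,\dots,n$, blowing up at the rate $\rho_n(\varepsilon)$ of \eqref{rate00} (and, when $n=2$, $a_{11}^{12}=O(|\ln\varepsilon|)$), whereas every other entry of $\mathbb{A}$ and every entry of $\mathbb{B},\mathbb{C},\mathbb{D},Y^1,Y^2$ stays bounded and converges. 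I would therefore split $\{1,\dots,\tfrac{n(n+1)}{2}\}$ into a ``translational'' block $\mathrm t=\{1,\dots,n\}$ and a ``rotational'' block $\mathrm r=\{n+1,\dots,\tfrac{n(n+1)}{2}\}$, decompose $\mathbb{A},\mathbb{B},\mathbb{C},Y^1$ into the corresponding sub-blocks (e.g. $\mathbb{A}_{\mathrm{tt}},\mathbb{A}_{\mathrm{tr}},\mathbb{A}_{\mathrm{rr}}$, $\mathbb{B}_{\mathrm t},\mathbb{B}_{\mathrm r}$, $Y^1_{\mathrm t},Y^1_{\mathrm r}$), and (only when $n=2,3$) multiply the $n$ equations of the first block of \eqref{PLA001} indexed by $\mathrm t$ by $\rho_n(\varepsilon)^{-1}$. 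As this is left multiplication of \eqref{PLA001} by the invertible matrix $\operatorname{diag}(\rho_n(\varepsilon)^{-1}I_n,\,I_{n(n+1)-n})$, it gives an equivalent system $\widetilde{\mathbb M}_\varepsilon\,(X^1,X^2)^T=\widetilde Y_\varepsilon$ whose coefficient matrix and right-hand side now converge.

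For $n=2,3$, part (i) of Lemma \ref{lemmabc} gives $\rho_n(\varepsilon)^{-1}\mathbb{A}_{\mathrm{tt}}=M_n+O(\rho_n(\varepsilon)^{-1}\max\{1,|\ln\varepsilon|\})$, where $M_n:=\operatorname{diag}\!\big(\mathcal{L}_\alpha(\pi/2\kappa)^{(n-1)/2}\big)_{\alpha=1}^n$ is a fixed positive diagonal matrix, while $\rho_n(\varepsilon)^{-1}\mathbb{A}_{\mathrm{tr}},\ \rho_n(\varepsilon)^{-1}\mathbb{B}_{\mathrm t},\ \rho_n(\varepsilon)^{-1}Y^1_{\mathrm t}$ are $O(\rho_n(\varepsilon)^{-1})$; combined with parts (ii)--(iv) of Lemma \ref{lemmabc} and Lemma \ref{KM323} this yields $\widetilde{\mathbb M}_\varepsilon=\widetilde{\mathbb M}_0+O(r_\varepsilon)$ and $\widetilde Y_\varepsilon=\widetilde Y_0+O(r_\varepsilon)$, where the first $n$ rows of $\widetilde{\mathbb M}_0$ are $[\,M_n\mid 0\mid 0\,]$, the remaining $n^2$ rows are $[\,\ast\mid\mathbb{A}_0^{\ast}\mid\mathbb{B}_0^{\ast}\,]$ and $[\,\ast\mid\mathbb{C}_0^{\ast}\mid\mathbb{D}^{\ast}\,]$, and $\widetilde Y_0=(0,\dots,0,\ b_1^{\ast n+1},\dots,b_1^{\ast\frac{n(n+1)}{2}},\ \sum_i b_i^{\ast1},\dots,\sum_i b_i^{\ast\frac{n(n+1)}{2}})^T$. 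Expanding $\det\widetilde{\mathbb M}_0$ along its first $n$ rows (which are supported on the first $n$ columns) gives $\det\widetilde{\mathbb M}_0=\det M_n\cdot\det\mathbb{F}_0^{\ast}\neq0$, so $\widetilde{\mathbb M}_\varepsilon$ is invertible for small $\varepsilon$; solving for the $\alpha$-th entry of $X^2$ by Cramer's rule and expanding the numerator along the same $n$ rows, the factor $\det M_n$ again appears and the remaining minor is exactly $\mathbb{F}_0^{\ast\alpha}$ of \eqref{ZZ002}, so by the Lipschitz dependence of a determinant on its entries
\[
C_2^\alpha=\frac{\det M_n\,\det\mathbb{F}_0^{\ast\alpha}+O(r_\varepsilon)}{\det M_n\,\det\mathbb{F}_0^{\ast}+O(r_\varepsilon)}=\frac{\det\mathbb{F}_0^{\ast\alpha}}{\det\mathbb{F}_0^{\ast}}+O(r_\varepsilon)=C_\ast^\alpha+O(r_\varepsilon).
\]
For $n>3$ no rescaling is needed: parts (i)--(iv) of Lemma \ref{lemmabc} and Lemma \ref{KM323} show that the coefficient matrix of \eqref{PLA001} equals $\mathbb{F}_1^{\ast}+O(r_\varepsilon)$ and $(Y^1,Y^2)^T=(b_1^{\ast1},\dots;\,\sum_i b_i^{\ast1},\dots)^T+O(r_\varepsilon)$, and the same Cramer's-rule argument gives $C_2^\alpha=\det\mathbb{F}_1^{\ast\alpha}/\det\mathbb{F}_1^{\ast}+O(r_\varepsilon)=C_\ast^\alpha+O(r_\varepsilon)$, using \eqref{ZZ003}.

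The step that needs genuine care is the rate accounting: one must check that the slowest among all the errors above is precisely $r_\varepsilon$ in each regime. For $n=2$ the rescaled translational rows contribute only $O(\varepsilon^{1/2}|\ln\varepsilon|)$, the blocks $\mathbb{B},\mathbb{C},\mathbb{D}$ give $O(\varepsilon^{1/4})$, the $b_i^\beta$ give $O(\varepsilon^{1/3})$, and the bottleneck is the convergence of those $a_{11}^{\alpha\beta}$ with $\alpha$ or $\beta$ in $\mathrm r$, which by \eqref{LMC1}--\eqref{M005} is $O(\varepsilon^{1/24})=O(r_\varepsilon)$; for $n=3$ the bottleneck is instead the $O(\rho_3(\varepsilon)^{-1})=O(|\ln\varepsilon|^{-1})$ error carried by the rescaled rows; for $n>3$ it is the $O(\varepsilon^{\min\{1/12,(n-3)/24\}})$ error on the diagonal of $\mathbb{A}$ coming from \eqref{LMC}. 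One also needs invertibility of $M_n$ and of $\widetilde{\mathbb M}_0$: the former is immediate from the explicit positive leading coefficients in \eqref{LMC}, and the latter reduces to $\det\mathbb{F}_0^{\ast}\neq0$ (resp. $\det\mathbb{F}_1^{\ast}\neq0$), which is precisely the condition under which \eqref{ZZWWWW} is meaningful and which follows from the unique solvability of the limit problem \eqref{LCRD001}--\eqref{LDCZ003}. Finally, the vanishing in the limit of $\rho_n(\varepsilon)^{-1}\mathbb{A}_{\mathrm{tr}}$ and $\rho_n(\varepsilon)^{-1}\mathbb{B}_{\mathrm t}$ is what forces $C_1^\alpha-C_2^\alpha\to0$ for $\alpha\in\mathrm t$ when $n=2,3$, and is the algebraic counterpart of the fact that the relative translational motion of the two particles is suppressed as $\varepsilon\to0$.
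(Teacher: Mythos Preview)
Your approach is correct and essentially matches the paper's: both split into the cases $n=2,3$ versus $n>3$, handle the blow-up of $a_{11}^{\alpha\alpha}$ for $\alpha\le n$ by isolating the translational block (your explicit row-rescaling by $\rho_n(\varepsilon)^{-1}$ is just a clean way to write what the paper does in one terse line when it passes from \eqref{PLA001} to $\det\mathbb{F}_0^{\alpha}/\det\mathbb{F}_0$), and then apply Cramer's rule together with Lemmas~\ref{lemmabc} and~\ref{KM323}, with the same bottleneck rates $r_\varepsilon$. The only place the paper is more explicit is the non-degeneracy $\det\mathbb{F}_0^{\ast}\neq0$ (resp.\ $\det\mathbb{F}_1^{\ast}\neq0$): rather than appealing to unique solvability of the limit problem, the paper shows positive definiteness directly by writing $\xi^T\mathbb{F}_0^{\ast}\xi=2\mu\int_{\Omega^{\ast}}|e(\cdot)|^2$ and using Lemma~\ref{GLW} to rule out $e(\cdot)\equiv0$.
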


For later use, we recall Lemma 6.1 of \cite{BLL2017} as follows.
\begin{lemma}\label{GLW}
Let $\boldsymbol{\Psi}$ the linear space of rigid displacement defined by \eqref{LAK01} with $n\geq2$. Assume that $\xi\in\boldsymbol{\Psi}$ vanishes at $n$ different points $\bar{x}_{1}$, $i=1,2,...,n$, which not lie on a $(n-1)$-dimensional plane. Then $\xi=0$.
\end{lemma}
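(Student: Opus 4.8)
The plan is to reduce the statement to elementary linear algebra by recording the explicit form of a rigid displacement. Every $\xi\in\boldsymbol{\Psi}$ is affine: differentiating the defining relation $\nabla\xi+(\nabla\xi)^{T}=0$ once shows that all second-order derivatives of $\xi$ vanish, so $\xi(x)=Ax+b$ with a constant skew-symmetric matrix $A=\nabla\xi$ (that is, $A^{T}=-A$) and a constant vector $b\in\mathbb{R}^{n}$; equivalently, one reads this off the basis \eqref{OPP}. Thus it suffices to prove $A=0$ and $b=0$.

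First I would use the vanishing hypothesis in pairs. From $\xi(\bar x_{i})=A\bar x_{i}+b=0$ for $i=1,\dots,n$, subtracting the equation at $\bar x_{1}$ yields $A(\bar x_{i}-\bar x_{1})=0$ for $i=2,\dots,n$. The assumed non-degeneracy of the points $\{\bar x_{i}\}$ --- they do not all lie on a lower-dimensional plane, i.e.\ they are affinely independent --- is precisely the statement that the $n-1$ difference vectors $v_{i}:=\bar x_{i}-\bar x_{1}$ are linearly independent, hence span an $(n-1)$-dimensional subspace $V\subset\mathbb{R}^{n}$. Therefore $A$ annihilates all of $V$.

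Next I would promote ``$A$ vanishes on the hyperplane $V$'' to ``$A=0$'' using skew-symmetry. Pick a nonzero vector $w$ spanning the one-dimensional orthogonal complement $V^{\perp}$. For every $v\in V$ we have $\langle Aw,v\rangle=-\langle w,Av\rangle=0$, so $Aw\in V^{\perp}$, i.e.\ $Aw=cw$ for some scalar $c$; but skew-symmetry also gives $\langle Aw,w\rangle=-\langle w,Aw\rangle$, forcing $\langle Aw,w\rangle=0$ and hence $c=0$, so $Aw=0$. Since $\mathbb{R}^{n}=V\oplus V^{\perp}$ and $A$ kills both summands, $A\equiv0$, and then $b=-A\bar x_{1}=0$, so $\xi\equiv0$.

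I do not expect a genuine obstacle: the lemma is elementary once the representation $\xi(x)=Ax+b$ is in hand. The two points deserving a moment's care are the translation of the geometric hypothesis on $\{\bar x_{i}\}$ into the linear independence of the $n-1$ difference vectors, and the use of skew-symmetry --- not mere injectivity --- to eliminate the remaining direction $V^{\perp}$: a symmetric matrix vanishing on a hyperplane need not vanish, whereas a skew-symmetric one does.
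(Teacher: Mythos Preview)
Your argument is correct and complete. The paper does not actually prove this lemma: it merely records it as Lemma~6.1 of \cite{BLL2017} and moves on. So your self-contained linear-algebra proof via the affine representation $\xi(x)=Ax+b$, $A^{T}=-A$, is a genuine addition rather than a rederivation. One small remark: the hypothesis as written (``not lie on an $(n-1)$-dimensional plane'') is a slip, since any $n$ points in $\mathbb{R}^{n}$ lie in some affine hyperplane; the intended condition is affine independence (equivalently, not lying on an $(n-2)$-dimensional plane), which is exactly how you read it and exactly what your proof uses. The step where skew-symmetry forces $Aw=0$ on the last direction $w\in V^{\perp}$ is the only place where the special structure of $\boldsymbol{\Psi}$ enters, and you handle it cleanly.
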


\begin{proof}[Proof of Lemma \ref{COOO}]
To begin with, for $\alpha=1,2,...,\frac{n(n+1)}{2}$, $n\geq2$, let the elements of $\alpha$-th column of $\mathbb{D}$ given in \eqref{FAMZ91} be replaced by column vector $\Big(\sum\limits_{i=1}^{2}b_{i}^{1},...,\sum\limits_{i=1}^{2}b_{i}^{\frac{n(n+1)}{2}}\Big)^{T}$ and we then obtain new matrix $\mathbb{D}^{\ast\alpha}$ as follows:
\begin{gather*}
\mathbb{D}^{\alpha}=
\begin{pmatrix}
\sum\limits^{2}_{i,j=1}a_{ij}^{11}&\cdots&\sum\limits_{i=1}^{2}b_{i}^{1}&\cdots&\sum\limits^{2}_{i,j=1}a_{ij}^{1\,\frac{n(n+1)}{2}} \\\\ \vdots&\ddots&\vdots&\ddots&\vdots\\\\ \sum\limits^{2}_{i,j=1}a_{ij}^{\frac{n(n+1)}{2}\,1}&\cdots&\sum\limits_{i=1}^{2}b_{i}^{\frac{n(n+1)}{2}}&\cdots&\sum\limits^{2}_{i,j=1}a_{ij}^{\frac{n(n+1)}{2}\,\frac{n(n+1)}{2}}
\end{pmatrix}.
\end{gather*}
The proof is divided into two cases as follows.

{\bf Case 1.} Consider $n=2,3$. Denote
\begin{gather}\mathbb{A}_{0}=\begin{pmatrix} a_{11}^{n+1\,n+1}&\cdots&a_{11}^{n+1\frac{n(n+1)}{2}} \\\\ \vdots&\ddots&\vdots\\\\a_{11}^{\frac{n(n+1)}{2}n+1}&\cdots&a_{11}^{\frac{n(n+1)}{2}\frac{n(n+1)}{2}}\end{pmatrix},\label{HARBN001}\\
\mathbb{B}_{0}=\begin{pmatrix} \sum\limits^{2}_{i=1}a_{i1}^{n+1\,1}&\cdots&\sum\limits^{2}_{i=1}a_{i1}^{n+1\,\frac{n(n+1)}{2}} \\\\ \vdots&\ddots&\vdots\\\\ \sum\limits^{2}_{i=1}a_{i1}^{\frac{n(n+1)}{2}1}&\cdots&\sum\limits^{2}_{i=1}a_{i1}^{\frac{n(n+1)}{2}\frac{n(n+1)}{2}}\end{pmatrix},\notag\\
\mathbb{C}_{0}=\begin{pmatrix} \sum\limits^{2}_{j=1}a_{1j}^{1\,n+1}&\cdots&\sum\limits^{2}_{j=1}a_{1j}^{1\frac{n(n+1)}{2}} \\\\ \vdots&\ddots&\vdots\\\\ \sum\limits^{2}_{j=1}a_{1j}^{\frac{n(n+1)}{2}\,n+1}&\cdots&\sum\limits^{2}_{j=1}a_{1j}^{\frac{n(n+1)}{2}\frac{n(n+1)}{2}}\end{pmatrix}.\notag
\end{gather}
For $\alpha=1,2,...,\frac{n(n+1)}{2}$, by replacing the elements of $\alpha$-th column of $\mathbb{B}_{0}$ by $\Big(b_{1}^{n+1},...,b_{1}^{\frac{n(n+1)}{2}}\Big)^{T}$, we derive new matrix $\mathbb{B}_{0}^{\alpha}$ as follows:
\begin{gather*}
\mathbb{B}_{0}^{\alpha}=
\begin{pmatrix}
\sum\limits^{2}_{i=1}a_{i1}^{n+1\,1}&\cdots&b_{1}^{n+1}&\cdots&\sum\limits^{2}_{i=1}a_{i1}^{n+1\,\frac{n(n+1)}{2}} \\\\ \vdots&\ddots&\vdots&\ddots&\vdots\\\\ \sum\limits^{2}_{i=1}a_{i1}^{\frac{n(n+1)}{2}\,1}&\cdots&b_{1}^{\frac{n(n+1)}{2}}&\cdots&\sum\limits^{2}_{i=1}a_{i1}^{\frac{n(n+1)}{2}\frac{n(n+1)}{2}}
\end{pmatrix}.
\end{gather*}

Denote
\begin{align*}
\mathbb{F}_{0}=\begin{pmatrix} \mathbb{A}_{0}&\mathbb{B}_{0} \\  \mathbb{C}_{0}&\mathbb{D}
\end{pmatrix},\quad \mathbb{F}_{0}^{\alpha}=\begin{pmatrix} \mathbb{A}_{0}&\mathbb{B}_{0}^{\alpha} \\  \mathbb{C}_{0}&\mathbb{D}^{\alpha}
\end{pmatrix},\quad\alpha=1,2,...,\frac{n(n+1)}{2}.
\end{align*}
Making use of Lemmas \ref{lemmabc} and \ref{KM323}, we have
\begin{align*}
\det\mathbb{F}_{0}=\det\mathbb{F}_{0}^{\ast}+O(\varepsilon^{\frac{n-1}{24}}),\quad\det\mathbb{F}_{0}^{\alpha}=\det\mathbb{F}^{\ast\alpha}_{0}+O(\varepsilon^{\frac{n-1}{24}}),
\end{align*}
and then
\begin{align}\label{QGH01}
\frac{\det\mathbb{F}_{0}^{\alpha}
}{\det\mathbb{F}_{0}}=&\frac{\det\mathbb{F}_{0}^{\ast\alpha}}{\det\mathbb{F}_{0}^{\ast}}\frac{1}{1-{\frac{\det\mathbb{F}_{0}^{\ast}-\det\mathbb{F}_{0}}{\det\mathbb{F}_{0}^{\ast}}}}+\frac{\det\mathbb{F}_{0}^{\alpha}-\det\mathbb{F}_{0}^{\ast\alpha}}{\det\mathbb{F}_{0}}\notag\\
=&\frac{\det\mathbb{F}_{0}^{\ast\alpha}}{\det\mathbb{F}_{0}^{\ast}}(1+O(\varepsilon^{\frac{n-1}{24}})).
\end{align}
Claim that $\det\mathbb{F}_{0}^{\ast}\neq0$. In fact, for any $\xi=(\xi_{1},\xi_{2},...,\xi_{n^{2}})^{T}\neq0$, we have
\begin{align*}
\xi^{T}\mathbb{F}_{0}^{\ast}\xi=&\int_{\Omega^{\ast}}\Bigg(2\mu e\bigg(\sum^{\frac{n(n+1)}{2}}_{\alpha=n+1}\xi_{\alpha-n}\mathbf{u}_{1}^{\ast\alpha}+\sum^{\frac{n(n+1)}{2}}_{\alpha=1}\xi_{\alpha+\frac{n(n-1)}{2}}(\mathbf{u}_{1}^{\ast\alpha}+\mathbf{u}_{2}^{\ast\alpha})\bigg),\notag\\
&\quad\quad\quad\quad e\bigg(\sum^{\frac{n(n+1)}{2}}_{\beta=n+1}\xi_{\beta-n}\mathbf{u}_{1}^{\ast\beta}+\sum^{\frac{n(n+1)}{2}}_{\beta=1}\xi_{\beta+\frac{n(n-1)}{2}}(\mathbf{u}_{1}^{\ast\beta}+\mathbf{u}_{2}^{\ast\beta})\bigg)\Bigg)\notag\\
=&2\mu\int_{\Omega^{\ast}}\bigg|e\bigg(\sum^{\frac{n(n+1)}{2}}_{\alpha=n+1}\xi_{\alpha-n}\mathbf{u}_{1}^{\ast\alpha}+\sum^{\frac{n(n+1)}{2}}_{\alpha=1}\xi_{\alpha+\frac{n(n-1)}{2}}(\mathbf{u}_{1}^{\ast\alpha}+\mathbf{u}_{2}^{\ast\alpha})\bigg)\bigg|^{2}>0,
\end{align*}
where in the last inequality we used the fact that $$e\bigg(\sum^{\frac{n(n+1)}{2}}_{\alpha=n+1}\xi_{\alpha-n}\mathbf{u}_{1}^{\ast\alpha}+\sum^{\frac{n(n+1)}{2}}_{\alpha=1}\xi_{\alpha+\frac{n(n-1)}{2}}(\mathbf{u}_{1}^{\ast\alpha}+\mathbf{u}_{2}^{\ast\alpha})\bigg)$$ is not identically zero in $\Omega^{\ast}$. Reasoning by contradiction, assume that $$e\bigg(\sum^{\frac{n(n+1)}{2}}_{\alpha=n+1}\xi_{\alpha-n}\mathbf{u}_{1}^{\ast\alpha}+\sum^{\frac{n(n+1)}{2}}_{\alpha=1}\xi_{\alpha+\frac{n(n-1)}{2}}(\mathbf{u}_{1}^{\ast\alpha}+\mathbf{u}_{2}^{\ast\alpha})\bigg)=0,\quad\mathrm{in}\;\Omega^{\ast},$$ then
\begin{align}\label{DAK}
\sum^{\frac{n(n+1)}{2}}_{\alpha=n+1}\xi_{\alpha-n}\mathbf{u}_{1}^{\ast\alpha}+\sum^{\frac{n(n+1)}{2}}_{\alpha=1}\xi_{\alpha+\frac{n(n-1)}{2}}(\mathbf{u}_{1}^{\ast\alpha}+\mathbf{u}_{2}^{\ast\alpha})=\sum^{\frac{n(n+1)}{2}}_{i=1}a_{i}\boldsymbol{\psi}_{i},\quad\mathrm{in}\;\Omega^{\ast},
\end{align}
for some constants $a_{i}$, $i=1,2,...,\frac{n(n+1)}{2}$. Since $\mathbf{u}_{1}^{\ast\alpha}=\mathbf{u}_{2}^{\ast\alpha}=0$ on $\partial D$, it follows from \eqref{DAK} that $\sum^{\frac{n(n+1)}{2}}_{i=1}a_{i}\boldsymbol{\psi}_{i}=0$, which implies that $a_{i}=0$, $i=1,2,...,\frac{n(n+1)}{2}$. Since
\begin{align*}
0=&\sum^{\frac{n(n+1)}{2}}_{\alpha=n+1}\xi_{\alpha-n}\mathbf{u}_{1}^{\ast\alpha}+\sum^{\frac{n(n+1)}{2}}_{\alpha=1}\xi_{\alpha+\frac{n(n-1)}{2}}(\mathbf{u}_{1}^{\ast\alpha}+\mathbf{u}_{2}^{\ast\alpha})\notag\\
=&
\begin{cases}
\sum\limits^{n}_{\alpha=1}\xi_{\alpha+\frac{n(n-1)}{2}}\boldsymbol{\psi}_{\alpha}+\sum\limits^{\frac{n(n+1)}{2}}_{\alpha=n+1}(\xi_{\alpha-n}+\xi_{\alpha+\frac{n(n-1)}{2}})\boldsymbol{\psi}_{\alpha},&\mathrm{on}\;\partial D_{1}^{\ast},\\
\sum\limits^{\frac{n(n+1)}{2}}_{\alpha=1}\xi_{\alpha+\frac{n(n-1)}{2}}\boldsymbol{\psi}_{\alpha},&\mathrm{on}\;\partial D_{2}^{\ast},
\end{cases}
\end{align*}
then we obtain that $\xi=0$. This is a contradiction.

Then applying Cramer's rule to \eqref{PLA001}, we deduce from \eqref{QGH01} that for $\alpha=1,2,...,\frac{n(n+1)}{2}$,
\begin{align*}
C_{2}^{\alpha}=&\frac{\det\mathbb{F}_{0}^{\alpha}}{\det \mathbb{F}_{0}}
\begin{cases}
1+O(\varepsilon^{\frac{1}{2}}|\ln\varepsilon|),&n=2,\\
1+O(|\ln\varepsilon|^{-1}),&n=3
\end{cases}
\notag\\
=&\frac{\det\mathbb{F}_{0}^{\ast\alpha}}{\det \mathbb{F}_{0}^{\ast}}
\begin{cases}
1+O(\varepsilon^{\frac{1}{24}}),&n=2,\\
1+O(|\ln\varepsilon|^{-1}),&n=3.
\end{cases}
\end{align*}

{\bf Case 2.} Consider $n>3$. For $\alpha=1,2,...,\frac{n(n+1)}{2}$, let $\big(b_{1}^{1},...,b_{1}^{\frac{n(n+1)}{2}}\big)^{T}$ substitute for the elements of $\alpha$-th column $\mathbb{B}$ and we then derive new matrix $\mathbb{B}_{1}^{\alpha}$ as follows:
\begin{gather*}
\mathbb{B}_{1}^{\alpha}=
\begin{pmatrix}
\sum\limits^{2}_{i=1}a_{i1}^{11}&\cdots&b_{1}^{1}&\cdots&\sum\limits^{2}_{i=1}a_{i1}^{1\,\frac{n(n+1)}{2}} \\\\ \vdots&\ddots&\vdots&\ddots&\vdots\\\\ \sum\limits^{2}_{i=1}a_{i1}^{\frac{n(n+1)}{2}\,1}&\cdots&b_{1}^{\frac{n(n+1)}{2}}&\cdots&\sum\limits^{2}_{i=1}a_{i1}^{\frac{n(n+1)}{2}\frac{n(n+1)}{2}}
\end{pmatrix}.
\end{gather*}
Denote
\begin{align*}
\mathbb{F}_{1}=\begin{pmatrix} \mathbb{A}&\mathbb{B} \\  \mathbb{C}&\mathbb{D}
\end{pmatrix},\quad\mathbb{F}^{\alpha}_{1}=\begin{pmatrix} \mathbb{A}&\mathbb{B}_{1}^{\alpha} \\  \mathbb{C}&\mathbb{D}^{\alpha}
\end{pmatrix},\;\,\alpha=1,2,...,\frac{n(n+1)}{2}.
\end{align*}
Making use of Lemmas \ref{lemmabc} and \ref{KM323}, we deduce that for $\alpha=1,2,...,\frac{n(n+1)}{2}$,
\begin{align*}
\det\mathbb{F}_{1}=\det\mathbb{F}_{1}^{\ast}+O(\varepsilon^{\min\{\frac{1}{12},\frac{n-3}{24}\}}),\quad\det\mathbb{F}_{1}^{\alpha}=\det\mathbb{F}^{\ast\alpha}_{1}+O(\varepsilon^{\min\{\frac{1}{12},\frac{n-3}{24}\}}).
\end{align*}
Analogously as above, it follows that $\det\mathbb{F}^{\ast}_{1}\neq0$. Then for $\alpha=1,2,...,\frac{n(n+1)}{2},$
\begin{align*}
\frac{\det\mathbb{F}_{1}^{\alpha}
}{\det\mathbb{F}_{1}}=&\frac{\det\mathbb{F}_{1}^{\ast\alpha}}{\det\mathbb{F}_{1}^{\ast}}\frac{1}{1-{\frac{\det\mathbb{F}_{1}^{\ast}-\det\mathbb{F}_{1}}{\det\mathbb{F}_{1}^{\ast}}}}+\frac{\det\mathbb{F}_{1}^{\alpha}-\det\mathbb{F}_{1}^{\ast\alpha}}{\det\mathbb{F}_{1}}\notag\\
=&\frac{\det\mathbb{F}^{\ast\alpha}_{1}}{\det\mathbb{F}_{1}^{\ast}}(1+O(\varepsilon^{\min\{\frac{1}{12},\frac{n-3}{24}\}})).
\end{align*}
Hence, applying Cramer's rule to \eqref{PLA001} again, we derive
\begin{align*}
C_{2}^{\alpha}=&\frac{\det\mathbb{F}_{1}^{\alpha}}{\det \mathbb{F}_{1}}=\frac{\det\mathbb{F}_{1}^{\ast\alpha}}{\det \mathbb{F}^{\ast}_{1}}(1+O(\varepsilon^{\min\{\frac{1}{12},\frac{n-3}{24}\}})),\quad\alpha=1,2,...,\frac{n(n+1)}{2}.
\end{align*}
The proof is complete.

\end{proof}

Based on the results above, we are led to give the proof of Theorem \ref{JGR}.
\begin{proof}[Proof of Theorem \ref{JGR}.]
Using linearity, we split the solution $(\mathbf{u}_{b}^{\ast},p_{b}^{\ast})$ of problem \eqref{LCRD001} as follows:
\begin{align}\label{KTG001}
\mathbf{u}_{b}^{\ast}=\sum^{\frac{n(n+1)}{2}}_{\alpha=1}C_{\ast}^{\alpha}(\mathbf{u}_{1}^{\ast\alpha}+\mathbf{u}_{2}^{\ast\alpha})+\mathbf{u}_{0}^{\ast},\quad p_{b}^{\ast}=\sum^{\frac{n(n+1)}{2}}_{\alpha=1}C_{\ast}^{\alpha}(p_{1}^{\ast\alpha}+p_{2}^{\ast\alpha})+p_{0}^{\ast},
\end{align}
where $(\mathbf{u}_{0}^{\ast},p_{0}^{\ast})$ and $(\mathbf{u}_{i}^{\ast\alpha},p_{i}^{\ast\alpha})$, $i=1,2,\,\alpha=1,2,...,\frac{n(n+1)}{2}$  are, respectively, the solutions of \eqref{ZG001} and \eqref{qaz001111}. Therefore, combining \eqref{AZQ001}, \eqref{KTG001}, Lemmas \ref{KM323} and \ref{COOO}, we have from integration by parts that
\begin{align*}
\mathcal{B}_{\beta}[\boldsymbol{\varphi}]-\mathcal{B}_{\beta}^{\ast}[\boldsymbol{\varphi}]=&\int_{\partial D_{1}}\boldsymbol{\psi}_{\beta}\cdot\sigma[\mathbf{u}_{b},p_{b}]\nu-\int_{\partial D_{1}^{\ast}}\boldsymbol{\psi}_{\beta}\cdot\sigma[\mathbf{u}^{\ast}_{b},p^{\ast}_{b}]\nu\notag\\
=&\sum^{\frac{n(n+1)}{2}}_{\alpha=1}(C_{\ast}^{\alpha}-C_{2}^{\alpha})\sum^{2}_{i=1}a_{i1}^{\alpha\beta}+\sum^{\frac{n(n+1)}{2}}_{\alpha=1}C_{\ast}^{\alpha}\left(\sum^{2}_{i=1}a_{i1}^{\ast\alpha\beta}-\sum^{2}_{i=1}a_{i1}^{\alpha\beta}\right)\notag\\
&+b_{1}^{\beta}-b_{1}^{\ast\beta}=O(r_{\varepsilon}),
\end{align*}
where $r_{\varepsilon}$ is given by \eqref{JTD}. The proof is finished.

\end{proof}

\section{Optimal gradient estimates and asymptotics }\label{KGRA90}

This section is to establish the optimal gradient estimates and asymptotic formulas for solution to Stokes problem \eqref{La.002} by using the united stress concentration factors captured in Theorem \ref{JGR}.

First, for $\alpha=1,2,...,n$, denote
\begin{gather}\label{PTCA001}
\mathbb{A}^{\ast\alpha}_{1}=\begin{pmatrix}\mathcal{B}_{\alpha}^{\ast}[\varphi]&a_{11}^{\ast\alpha\,n+1}&\cdots&a_{11}^{\ast\alpha\frac{n(n+1)}{2}} \\ \mathcal{B}_{n+1}^{\ast}[\varphi]&a_{11}^{\ast n+1\,n+1}&\cdots&a_{11}^{\ast n+1\frac{n(n+1)}{2}}\\ \vdots&\vdots&\ddots&\vdots\\ \mathcal{B}_{\frac{n(n+1)}{2}}^{\ast}[\varphi]&a_{11}^{\ast\frac{n(n+1)}{2}d+1}&\cdots&a_{11}^{\ast\frac{n(n+1)}{2}\frac{n(n+1)}{2}}
\end{pmatrix}.
\end{gather}
Second, for $\alpha=n+1,...,\frac{n(n+1)}{2}$, by substituting $\big(\mathcal{B}_{n+1}^{\ast}[\varphi],...,\mathcal{B}_{\frac{n(n+1)}{2}}^{\ast}[\varphi]\big)^{T}$ for the elements of $\alpha$-th column of $\mathbb{A}_{0}^{\ast}$ given in \eqref{LAGT001}, we get the following new matrix
\begin{gather}\label{DKY001}
\mathbb{A}_{2}^{\ast\alpha}=
\begin{pmatrix}
a_{11}^{\ast n+1\,n+1}&\cdots&\mathcal{B}_{n+1}^{\ast}[\varphi]&\cdots&a_{11}^{\ast n+1\,\frac{n(n+1)}{2}} \\\\ \vdots&\ddots&\vdots&\ddots&\vdots\\\\a_{11}^{\ast\frac{n(n+1)}{2}\,n+1}&\cdots&\mathcal{B}_{\frac{n(n+1)}{2}}^{\ast}[\varphi]&\cdots&a_{11}^{\ast\frac{n(n+1)}{2}\,\frac{n(n+1)}{2}}
\end{pmatrix}.
\end{gather}
Finally, for $\alpha=1,2,...,\frac{n(n+1)}{2}$, we replace the elements of $\alpha$-th column of $\mathbb{A}^{\ast}$ defined in \eqref{WZW} by $\big(\mathcal{B}_{1}^{\ast}[\varphi],...,\mathcal{B}^{\ast}_{\frac{n(n+1)}{2}}[\varphi]\big)^{T}$ and obtain new matrix $\mathbb{A}_{3}^{\ast\alpha}$ as follows:
\begin{gather}\label{JGAT001}
\mathbb{A}_{3}^{\ast\alpha}=
\begin{pmatrix}
a_{11}^{\ast11}&\cdots&\mathcal{B}_{1}^{\ast}[\varphi]&\cdots&a_{11}^{\ast1\,\frac{n(n+1)}{2}} \\\\ \vdots&\ddots&\vdots&\ddots&\vdots\\\\a_{11}^{\ast\frac{n(n+1)}{2}\,1}&\cdots&\mathcal{B}_{\frac{n(n+1)}{2}}^{\ast}[\varphi]&\cdots&a_{11}^{\ast\frac{n(n+1)}{2}\,\frac{n(n+1)}{2}}
\end{pmatrix}.
\end{gather}

Then we have
\begin{theorem}\label{MAINZW002}
Suppose that $D_{1},D_{2}\subset D\subset\mathbb{R}^{n}\,(n\geq2)$ are defined as above. For given $\boldsymbol{\varphi}\in C^{2}(\partial D;\mathbb{R}^{n})$, let $\mathbf{u}\in H^{1}(D;\mathbb{R}^{n})\cap C^{1}(\overline{\Omega};\mathbb{R}^{n})$ and $p\in L^{2}(D)\cap (\overline{\Omega})$ be the solution of \eqref{La.002} and \eqref{COM001}. Then for a arbitrarily small $\varepsilon>0$ and $x=(x',x_{n})\in\Omega_{R_{0}}$,

$(i)$ in the case of $n=2,3,$
\begin{align*}
\sigma[\mathbf{u},p-(q)_{\delta;x'}]=&\sum^{n}_{\alpha=1}\frac{\det\mathbb{A}_{1}^{\ast\alpha}}{\det \mathbb{A}_{0}^{\ast}}\frac{\rho_{n}^{-1}(\varepsilon)}{\mathcal{L}_{\alpha}(\frac{\pi}{2\kappa})^{\frac{n-1}{2}}}\frac{1+O(r_{\varepsilon})}{1+\widetilde{\mathcal{G}}_{n}^{\ast\alpha}\rho^{-1}_{n}(\varepsilon)}\sigma[\bar{\mathbf{u}}_{1}^{\alpha},\bar{p}_{1}^{\alpha}]\notag\\
&+\sum^{\frac{n(n+1)}{2}}_{\alpha=n+1}\frac{\det\mathbb{A}_{2}^{\ast\alpha}}{\det \mathbb{A}^{\ast}_{0}}\big(1+O(r_{\varepsilon})\big)\sigma[\bar{\mathbf{u}}_{1}^{\alpha},\bar{p}_{1}^{\alpha}]+O(1)\rho^{-1}_{n}(\varepsilon)\delta^{-1/2};
\end{align*}

$(ii)$ in the case of $n>3$,
\begin{align*}
\sigma[\mathbf{u},p-(q)_{\delta;x'}]=&\sum^{\frac{n(n+1)}{2}}_{\alpha=1}\frac{\det\mathbb{A}_{3}^{\ast\alpha}}{\det \mathbb{A}^{\ast}}(1+O(\varepsilon^{\min\{\frac{1}{12},\frac{n-3}{24}\}}))\sigma[\bar{\mathbf{u}}_{1}^{\alpha},\bar{p}_{1}^{\alpha}]+O(1)\delta^{-1/2},
\end{align*}
where $(q)_{\delta;x'}:=\frac{1}{|\Omega_{\delta}(x')|}\int_{\Omega_{\delta}(x')}q(y)dy$ with $q$ defined by \eqref{QQQ001} below, $r_{\varepsilon}$ and $\delta$ are, respectively, defined by \eqref{JTD} and \eqref{DEL010}, $\bar{\mathbf{u}}_{1}^{\alpha}$ and $\bar{p}_{1}^{\alpha}$, $\alpha=1,2,...,\frac{n(n+1)}{2}$ are, respectively, given in \eqref{zzwz002} and \eqref{PMAIN002}, $\mathcal{L}_{\alpha}$, $\alpha=1,2,...,\frac{n(n+1)}{2}$ and $\rho_{n}(\varepsilon)$ are, respectively, given by \eqref{CONT001}--\eqref{rate00}, $\mathbb{A}^{\ast}$ and $\mathbb{A}_{0}^{\ast}$ are given by \eqref{WZW} and \eqref{LAGT001}, respectively, $\mathbb{A}_{1}^{\ast\alpha}$, $\alpha=1,2,...,n$, $\mathbb{A}_{2}^{\ast\alpha}$, $\alpha=n+1,...,\frac{n(n+1)}{2}$, $\mathbb{A}_{3}^{\ast\alpha}$, $\alpha=1,2,...,\frac{n(n+1)}{2}$ are, respectively, given by \eqref{PTCA001}--\eqref{JGAT001}, the geometry constants $\widetilde{\mathcal{G}}_{n}^{\ast\alpha}$, $\alpha=1,2,...,n,\,n=2,3$ are defined by \eqref{GEOT001}.

\end{theorem}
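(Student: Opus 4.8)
\emph{Overall strategy.} The plan is to feed three ingredients into the linear decomposition \eqref{Decom002}: the pointwise expansion of the building blocks from Proposition \ref{thm86}, the asymptotics of the coefficients $C_{1}^{\alpha}-C_{2}^{\alpha}$, and the fact that $\sigma[\mathbf{u}_{b},p_{b}]$ contributes only a bounded quantity in $\Omega_{R_{0}}$ (there is no potential jump across the neck, see the discussion after \eqref{LGBC}). Concretely, \eqref{Decom002} gives $\sigma[\mathbf{u},p]=\sum_{\alpha}(C_{1}^{\alpha}-C_{2}^{\alpha})\sigma[\mathbf{u}_{1}^{\alpha},p_{1}^{\alpha}]+\sigma[\mathbf{u}_{b},p_{b}]$ in $\Omega_{R_{0}}$; subtracting the indicated local pressure average $(q)_{\delta;x'}$, I rewrite each term using $q_{1}^{\alpha}=p_{1}^{\alpha}-\bar p_{1}^{\alpha}$, so that $\sigma[\mathbf{u}_{1}^{\alpha},p_{1}^{\alpha}-(q_{1}^{\alpha})_{\delta;x'}]-\sigma[\bar{\mathbf{u}}_{1}^{\alpha},\bar p_{1}^{\alpha}]=2\mu e(\mathbf{u}_{1}^{\alpha}-\bar{\mathbf{u}}_{1}^{\alpha})-\big(q_{1}^{\alpha}-(q_{1}^{\alpha})_{\delta;x'}\big)\mathbb{I}$ is controlled by the right-hand side of \eqref{Le2.025}; the normalization in the statement is chosen precisely so that this subtraction is licit. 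What remains is to expand $C_{1}^{\alpha}-C_{2}^{\alpha}$.

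\emph{Asymptotics of $C_{1}^{\alpha}-C_{2}^{\alpha}$.} These solve \eqref{JGRO001}, i.e.\ $\mathbb{A}X^{1}=(\mathcal{B}_{\beta}[\boldsymbol{\varphi}])_{\beta}$ with $\mathbb{A}=(a_{11}^{\alpha\beta})$, so I would apply Cramer's rule and insert Lemma \ref{lemmabc} for the entries of $\mathbb{A}$ and Theorem \ref{JGR} for the right-hand side. For $n=2,3$ the matrix $\mathbb{A}$ has a mixed-scale block structure: the $n\times n$ translation block has blow-up diagonal $a_{11}^{\alpha\alpha}=\mathcal{L}_{\alpha}(\pi/2\kappa)^{(n-1)/2}\rho_{n}(\varepsilon)+\mathcal{G}_{n}^{\ast\alpha}+o(1)$, its off-diagonal entries are $o(\rho_{n}(\varepsilon))$ (only logarithmically divergent when $n=2$), and every remaining entry converges, the rotation block tending to $\mathbb{A}_{0}^{\ast}$. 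A Schur-complement/cofactor expansion then yields $\det\mathbb{A}=\prod_{\gamma=1}^{n}a_{11}^{\gamma\gamma}\,(\det\mathbb{A}_{0}^{\ast}+o(1))$, and replacing the $\alpha$-th column by $(\mathcal{B}_{\beta}[\boldsymbol{\varphi}])_{\beta}=(\mathcal{B}_{\beta}^{\ast}[\boldsymbol{\varphi}])_{\beta}+O(r_{\varepsilon})$ collapses, after factoring out the surviving blow-up factors and reordering columns as in \eqref{PTCA001}--\eqref{JGAT001}, onto $\det\mathbb{A}_{1}^{\ast\alpha}$ when $\alpha\le n$ and $\det\mathbb{A}_{2}^{\ast\alpha}$ when $\alpha>n$. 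Writing $a_{11}^{\alpha\alpha}=\mathcal{L}_{\alpha}(\pi/2\kappa)^{(n-1)/2}\rho_{n}(\varepsilon)\big(1+\widetilde{\mathcal{G}}_{n}^{\ast\alpha}\rho_{n}^{-1}(\varepsilon)\big)$ up to a relative error absorbed in $O(r_{\varepsilon})$, with $\widetilde{\mathcal{G}}_{n}^{\ast\alpha}:=\mathcal{G}_{n}^{\ast\alpha}/(\mathcal{L}_{\alpha}(\pi/2\kappa)^{(n-1)/2})$, gives exactly the stated expansions of $C_{1}^{\alpha}-C_{2}^{\alpha}$; the dominant error is $\mathcal{B}_{\beta}[\boldsymbol{\varphi}]-\mathcal{B}_{\beta}^{\ast}[\boldsymbol{\varphi}]=O(r_{\varepsilon})$ from Theorem \ref{JGR}. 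For $n>3$ all entries of $\mathbb{A}$ converge with rates bounded by $\varepsilon^{\min\{1/12,(n-3)/24\}}=r_{\varepsilon}$ (Lemma \ref{lemmabc}), so Cramer's rule directly gives $C_{1}^{\alpha}-C_{2}^{\alpha}=\det\mathbb{A}_{3}^{\ast\alpha}/\det\mathbb{A}^{\ast}+O(\varepsilon^{\min\{1/12,(n-3)/24\}})$. Here one needs $\det\mathbb{A}_{0}^{\ast}\neq0$ and $\det\mathbb{A}^{\ast}\neq0$; both come from the positive-definiteness argument already used for Lemma \ref{COOO}, since these are Gram matrices of the strains $e(\mathbf{u}_{1}^{\ast\alpha})$, which are independent modulo rigid displacements because $\mathbf{u}_{1}^{\ast\alpha}=\boldsymbol{\psi}_{\alpha}$ on $\partial D_{1}^{\ast}$, vanish on $\partial D$, and Lemma \ref{GLW} applies.

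\emph{Assembly and main obstacle.} Substituting the expansion of $C_{1}^{\alpha}-C_{2}^{\alpha}$ into the decomposition and bounding each $\sigma[\mathbf{u}_{1}^{\alpha},p_{1}^{\alpha}-(q_{1}^{\alpha})_{\delta;x'}]-\sigma[\bar{\mathbf{u}}_{1}^{\alpha},\bar p_{1}^{\alpha}]$ by \eqref{Le2.025} produces the two asserted formulas. In the error bookkeeping one uses that $C_{1}^{\alpha}-C_{2}^{\alpha}$ is of order $\rho_{n}^{-1}(\varepsilon)$ for $\alpha\le n$ and of order $1$ for $\alpha>n$, combined with the $\alpha$-dependent bounds on the right of \eqref{Le2.025} and with the fact that $\sigma[\mathbf{u}_{b},p_{b}]$ together with the subleading pieces for the rotational indices are dominated by the leading singular terms; for $n=2,3$ the worst surviving error is the neck translation $\alpha=n$, giving $O(1)\rho_{n}^{-1}(\varepsilon)\delta^{-1/2}$, while for $n>3$ everything is of size $\delta^{-1/2}$. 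I expect the principal difficulty to lie in the determinant asymptotics of Step 2: organising the mixed-scale structure of $\mathbb{A}$ (entries of sizes $\rho_{n}(\varepsilon)$, $|\ln\varepsilon|$ and $O(1)$), checking that the Cramer numerators collapse in the limit exactly onto $\det\mathbb{A}_{1}^{\ast\alpha}$, $\det\mathbb{A}_{2}^{\ast\alpha}$, $\det\mathbb{A}_{3}^{\ast\alpha}$ with the correct signs under the column permutations, and verifying that no accumulated error exceeds $r_{\varepsilon}$ relative to the leading terms.
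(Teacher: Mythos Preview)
Your proposal is correct and follows essentially the same route as the paper: decompose via \eqref{Decom002}, expand $C_{1}^{\alpha}-C_{2}^{\alpha}$ by Cramer's rule on \eqref{JGRO001} using Lemma~\ref{lemmabc} and Theorem~\ref{JGR} (block-structure analysis for $n=2,3$, direct convergence for $n>3$), control the building blocks by Proposition~\ref{thm86}, and absorb $\sigma[\mathbf{u}_{b},p_{b}]$ via Corollary~\ref{coro00z}. Your explicit check that $\det\mathbb{A}_{0}^{\ast}\neq0$ and $\det\mathbb{A}^{\ast}\neq0$ via the Gram-matrix/Lemma~\ref{GLW} argument is a small addition the paper leaves implicit.
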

\begin{remark}\label{ANALY001}
According to the definitions of $\bar{\mathbf{u}}_{1}^{\alpha}$ and $\bar{p}_{1}^{\alpha}$, $\alpha=1,2,...,\frac{n(n+1)}{2}$, we know that the main singularity of $\nabla\bar{\mathbf{u}}_{1}^{\alpha}$ is of order $O(\boldsymbol{\psi}_{\alpha}\delta^{-1})$ with the blow-up rates of $\varepsilon^{-1}$ and $\varepsilon^{-1/2}$ achieving at the $(n-1)$-dimensional ball $\{|x'|\leq\sqrt{\varepsilon}\}$, which correspond to the case of $\alpha=1,2,...,n$ and $\alpha=n+1,...,\frac{n(n+1)}{2}$, respectively. By contrast, the leading singularity of $\bar{p}_{1}^{\alpha}$, $\alpha=1,2,...,\frac{n(n+1)}{2}$ is determined by the $n$-th term $\bar{p}_{1}^{n}$ with its blow-up rates of order $O(\varepsilon^{-2})$ attaining at $\{|x'|\leq\sqrt{\varepsilon}\}$, which is greater than that of $\nabla\bar{\mathbf{u}}_{1}^{\alpha}$. Then combining the asymptotic results in Theorem \ref{MAINZW002}, we obtain that the largest blow-up rate of $\sigma[\mathbf{u},p-(q)_{\delta;x'}]$ arises from the pressure and is of order $O(\rho_{n}^{-1}(\varepsilon)\varepsilon^{-2})$, see Corollary \ref{MGA001} below for further details. In addition, the phenomena that the greatest singularity is determined by the pressure has also been revealed in previous work \cite{LWZ2020} on the blow-up of hydrodynamic forces, see the explicit singular functions corresponding to the pressure in Sections 4.3 and 4.6 of \cite{LWZ2020} for further details.

\end{remark}

\begin{proof}

{\bf Step 1.} The first step is devoted to giving a precise computation for $C_{1}^{\alpha}-C_{2}^{\alpha}$, $\alpha=1,2,...,\frac{n(n+1)}{2}$.

{\bf Case 1.} Consider the case when $n=2,3$. For $\alpha=1,2,...,n$, define
\begin{gather*}
\mathbb{A}^{\alpha}_{1}=\begin{pmatrix}\mathcal{B}_{\alpha}[\varphi]&a_{11}^{\alpha\,n+1}&\cdots&a_{11}^{\alpha\frac{n(n+1)}{2}} \\ \mathcal{B}_{n+1}[\varphi]&a_{11}^{n+1\,n+1}&\cdots&a_{11}^{n+1\frac{n(n+1)}{2}}\\ \vdots&\vdots&\ddots&\vdots\\ \mathcal{B}_{\frac{n(n+1)}{2}}[\varphi]&a_{11}^{\frac{n(n+1)}{2}d+1}&\cdots&a_{11}^{\frac{n(n+1)}{2}\frac{n(n+1)}{2}}
\end{pmatrix}.
\end{gather*}
For $\alpha=n+1,...,\frac{n(n+1)}{2}$, let $\big(\mathcal{B}_{d+1}[\varphi],...,\mathcal{B}_{\frac{n(n+1)}{2}}[\varphi]\big)^{T}$ substitute for the elements of $\alpha$-th column of $\mathbb{A}_{0}$ given in \eqref{HARBN001} and then generate new matrix $\mathbb{A}_{2}^{\alpha}$ as follows:
\begin{gather*}
\mathbb{A}_{2}^{\alpha}=
\begin{pmatrix}
a_{11}^{d+1\,d+1}&\cdots&\mathcal{B}_{d+1}[\varphi]&\cdots&a_{11}^{d+1\,\frac{n(n+1)}{2}} \\\\ \vdots&\ddots&\vdots&\ddots&\vdots\\\\a_{11}^{\frac{n(n+1)}{2}\,d+1}&\cdots&\mathcal{B}_{\frac{n(n+1)}{2}}[\varphi]&\cdots&a_{11}^{\frac{n(n+1)}{2}\,\frac{n(n+1)}{2}}
\end{pmatrix}.
\end{gather*}

Denote
\begin{align}\label{GEOT001}
\widetilde{\mathcal{G}}_{n}^{\ast\alpha}:=\frac{\mathcal{G}_{n}^{\ast\alpha}}{\mathcal{L}_{\alpha}(\frac{\pi}{2\kappa})^{\frac{n-1}{2}}},\quad\alpha=1,2,...,n,\,n=2,3,
\end{align}
where $\mathcal{G}_{n}^{\ast\alpha}$, $\alpha=1,2,...,n,\,n=2,3$ are defined by \eqref{NZWA001}. A combination of Theorem \ref{JGR} and Lemma \ref{lemmabc} shows that
\begin{align}\label{MZGA001}
\frac{1}{a_{11}^{\alpha\alpha}}=&\frac{\rho_{n}^{-1}(\varepsilon)}{\mathcal{L}_{\alpha}(\frac{\pi}{2\kappa})^{\frac{n-1}{2}}}\frac{1}{1-\frac{\mathcal{L}_{\alpha}(\frac{\pi}{2\kappa})^{\frac{n-1}{2}}-a_{11}^{\alpha\alpha}\rho_{n}^{-1}(\varepsilon)}{\mathcal{L}_{\alpha}(\frac{\pi}{2\kappa})^{\frac{n-1}{2}}}}\notag\\
=&\frac{\rho_{n}^{-1}(\varepsilon)}{\mathcal{L}_{\alpha}(\frac{\pi}{2\kappa})^{\frac{n-1}{2}}}\frac{1}{1+\widetilde{\mathcal{G}}_{n}^{\ast\alpha}\rho^{-1}_{n}(\varepsilon)+O(1)\rho_{n}^{-1}(\varepsilon)\max\{\varepsilon^{\frac{1}{12}},\varrho_{\alpha,n}(\varepsilon)\}}\notag\\
=&\frac{\rho_{n}^{-1}(\varepsilon)}{\mathcal{L}_{\alpha}(\frac{\pi}{2\kappa})^{\frac{n-1}{2}}}\frac{1+O(\rho_{n}^{-1}(\varepsilon)\varrho_{\alpha,n}(\varepsilon))}{1+\widetilde{\mathcal{G}}_{n}^{\ast\alpha}\rho^{-1}_{n}(\varepsilon)},\quad\alpha=1,2,...,n,\,n=2,3,
\end{align}
and
\begin{align}\label{MZGA002}
\begin{cases}
\det\mathbb{A}_{0}=\det\mathbb{A}_{0}^{\ast}+O(\varepsilon^{\frac{n-1}{24}}),\\
\det\mathbb{A}_{1}^{\alpha}=\det\mathbb{A}^{\ast\alpha}_{1}+O(r_{\varepsilon}),&\alpha=1,2,...,n,\\
\det\mathbb{A}_{2}^{\alpha}=\det\mathbb{A}^{\ast\alpha}_{2}+O(r_{\varepsilon}),&\alpha=n+1,...,\frac{n(n+1)}{2},
\end{cases}
\end{align}
where $r_{\varepsilon}$ is given by \eqref{JTD}. Denote
\begin{align}\label{OME001}
\omega_{n}(\varepsilon):=
\begin{cases}
|\ln\varepsilon|,&n=2,\\
1,&n=3.
\end{cases}
\end{align}
Utilizing Cramer's rule for \eqref{JGRO001}, it then follows from \eqref{MZGA001} and \eqref{MZGA002} that for $n=2,3,$
\begin{align}\label{LAMNZ001}
C_{1}^{\alpha}-C_{2}^{\alpha}=&
\begin{cases}
\frac{\prod\limits_{i\neq\alpha}^{n}a_{11}^{ii}\det\mathbb{A}_{1}^{\alpha}}{\prod\limits_{i=1}^{n}a_{11}^{ii}\det \mathbb{A}_{0}}\big(1+O(\rho^{-1}_{n}(\varepsilon)\omega_{n}(\varepsilon))\big),\quad\alpha=1,2,...,n,\\
\frac{\det\mathbb{A}_{2}^{\alpha}}{\det \mathbb{A}_{0}}\big(1+O(\rho_{n}^{-1}(\varepsilon))\big),\quad\quad\quad\alpha=n+1,...,\frac{n(n+1)}{2}
\end{cases}\notag\\
=&\begin{cases}
\frac{\rho_{n}^{-1}(\varepsilon)}{\mathcal{L}_{\alpha}(\frac{\pi}{2\kappa})^{\frac{n-1}{2}}}\frac{\det\mathbb{A}_{1}^{\ast\alpha}}{\det \mathbb{A}_{0}^{\ast}}\frac{1+O(r_{\varepsilon})}{1+\widetilde{\mathcal{G}}_{n}^{\ast\alpha}\rho^{-1}_{n}(\varepsilon)},\quad\alpha=1,2,...,n, \vspace{0.5ex} \\
\frac{\det\mathbb{A}_{2}^{\ast\alpha}}{\det \mathbb{A}^{\ast}_{0}}\big(1+O(r_{\varepsilon})\big),\quad\quad\quad\alpha=n+1,...,\frac{n(n+1)}{2},
\end{cases}
\end{align}
where $\rho_{n}(\varepsilon)$ and $\omega_{n}(\varepsilon)$ are, respectively, given by \eqref{rate00} and \eqref{OME001}.

{\bf Case 2.} Consider the case when $n>3$. For $\alpha=1,2,...,\frac{n(n+1)}{2}$, let the elements of $\alpha$-th column of $\mathbb{A}$ given in \eqref{GGDA01} be replaced by $\big(\mathcal{B}_{1}[\varphi],...,\mathcal{B}_{\frac{n(n+1)}{2}}[\varphi]\big)^{T}$ and we thus obtain new matrix $\mathbb{A}_{3}^{\alpha}$ as follows:
\begin{gather*}
\mathbb{A}_{3}^{\alpha}=
\begin{pmatrix}
a_{11}^{11}&\cdots&\mathcal{B}_{1}[\varphi]&\cdots&a_{11}^{1\,\frac{n(n+1)}{2}} \\\\ \vdots&\ddots&\vdots&\ddots&\vdots\\\\a_{11}^{\frac{n(n+1)}{2}\,1}&\cdots&\mathcal{B}_{\frac{n(n+1)}{2}}[\varphi]&\cdots&a_{11}^{\frac{n(n+1)}{2}\,\frac{n(n+1)}{2}}
\end{pmatrix}.
\end{gather*}
Hence, using Theorem \ref{JGR} and Lemma \ref{lemmabc} again, we deduce that for $\alpha=1,2,...,\frac{n(n+1)}{2}$,
\begin{align*}
\det\mathbb{A}=\det\mathbb{A}^{\ast}+O(\varepsilon^{\min\{\frac{1}{12},\frac{n-3}{24}\}}),\quad \det\mathbb{A}_{3}^{\alpha}=\det\mathbb{A}_{3}^{\ast\alpha}+O(\varepsilon^{\min\{\frac{1}{12},\frac{n-3}{24}\}}).
\end{align*}
Applying Cramer's rule to \eqref{JGRO001} again, we have
\begin{align}\label{GMARZT001}
C_{1}^{\alpha}-C_{2}^{\alpha}=&\frac{\det\mathbb{A}_{3}^{\alpha}}{\det \mathbb{A}}=\frac{\det\mathbb{A}_{3}^{\ast\alpha}}{\det \mathbb{A}^{\ast}}(1+O(\varepsilon^{\min\{\frac{1}{12},\frac{n-3}{24}\}})).
\end{align}

\noindent{\bf Step 2.}
A consequence of Corollary \ref{coro00z} and \eqref{LGR001} is that
\begin{align}\label{KGTA}
\begin{cases}
|\nabla \mathbf{u}_{b}|=\sum\limits_{\alpha=1}^{\frac{n(n+1)}{2}}|C_{2}^\alpha\nabla(\mathbf{u}_{1}^\alpha+\mathbf{u}_{2}^\alpha)|+|\nabla\mathbf{u}_{0}|\leq C\delta^{-\frac{n}{2}}e^{-\frac{1}{2C\delta^{1/2}}},\vspace{0.5ex} \\
|p_{b}|=\sum\limits_{\alpha=1}^{\frac{n(n+1)}{2}}|C_{2}^\alpha(p_{1}^\alpha+p_{2}^\alpha)|+|p_{0}|\leq C\delta^{-\frac{n}{2}}e^{-\frac{1}{2C\delta^{1/2}}},
\end{cases}\quad\mathrm{in}\;\Omega_{R_{0}}.
\end{align}
For simplicity, denote
\begin{align*}
\tilde{r}_{\alpha}(\delta):=\begin{cases}
1,&\alpha=1,...,n-1,\\
\delta^{-1/2},&\alpha=n,\\
1,&\alpha=n+1,...,2n-1,\\
\delta^{1/2},&\alpha=2n,...,\frac{n(n+1)}{2},\,n\geq3,
\end{cases}
\end{align*}
and
\begin{align}\label{QQQ001}
q:=\sum^{\frac{n(n+1)}{2}}_{\alpha=1}(C_{1}^{\alpha}-C_{2}^{\alpha})(p_{1}^{\alpha}-\bar{p}_{1}^{\alpha}),
\end{align}
where the exact values of $C_{1}^{\alpha}-C_{2}^{\alpha}$, $\alpha=1,2,...,\frac{n(n+1)}{2}$ are given by \eqref{LAMNZ001} and \eqref{GMARZT001}. According to \eqref{Decom002}, it follows from \eqref{Le2.025} and \eqref{LAMNZ001}--\eqref{KGTA} that for $x\in\Omega_{R_{0}}$,

$(i)$ in the case of $n=2,3,$
\begin{align}\label{EQUA001}
\nabla\mathbf{u}=&\sum^{n}_{\alpha=1}\frac{\det\mathbb{A}_{1}^{\ast\alpha}}{\det \mathbb{A}_{0}^{\ast}}\frac{\rho_{n}^{-1}(\varepsilon)}{\mathcal{L}_{\alpha}(\frac{\pi}{2\kappa})^{\frac{n-1}{2}}}\frac{1+O(r_{\varepsilon})}{1+\widetilde{\mathcal{G}}_{n}^{\ast\alpha}\rho^{-1}_{n}(\varepsilon)}(\nabla\bar{\mathbf{u}}_{1}^{\alpha}+O(\tilde{r}_{\alpha}(\delta)))\notag\\
&+\sum^{\frac{n(n+1)}{2}}_{\alpha=n+1}\frac{\det\mathbb{A}_{2}^{\ast\alpha}}{\det \mathbb{A}^{\ast}_{0}}\big(1+O(r_{\varepsilon})\big)(\nabla\bar{\mathbf{u}}_{1}^{\alpha}+O(\tilde{r}_{\alpha}(\delta)))+O(1)\delta^{-\frac{n}{2}}e^{-\frac{1}{2C\delta^{1/2}}}\notag\\
=&\sum^{n}_{\alpha=1}\frac{\det\mathbb{A}_{1}^{\ast\alpha}}{\det \mathbb{A}_{0}^{\ast}}\frac{\rho_{n}^{-1}(\varepsilon)}{\mathcal{L}_{\alpha}(\frac{\pi}{2\kappa})^{\frac{n-1}{2}}}\frac{1+O(r_{\varepsilon})}{1+\widetilde{\mathcal{G}}_{n}^{\ast\alpha}\rho^{-1}_{n}(\varepsilon)}\nabla\bar{\mathbf{u}}_{1}^{\alpha}\notag\\
&+\sum^{\frac{n(n+1)}{2}}_{\alpha=n+1}\frac{\det\mathbb{A}_{2}^{\ast\alpha}}{\det \mathbb{A}^{\ast}_{0}}\big(1+O(r_{\varepsilon})\big)\nabla\bar{\mathbf{u}}_{1}^{\alpha}+O(1)\rho^{-1}_{n}(\varepsilon)\delta^{-1/2},
\end{align}
and
\begin{align}\label{EQUA002}
p-(q)_{\delta;x'}=&\sum^{\frac{n(n+1)}{2}}_{\alpha=1}(C_{1}^{\alpha}-C_{2}^{\alpha})(\bar{p}_{1}^{\alpha}+q_{1}^{\alpha}-(q_{1}^{\alpha})_{\delta;x'})+O(1)\delta^{-\frac{n}{2}}e^{-\frac{1}{2C\delta^{1/2}}}\notag\\
=&\sum^{n}_{\alpha=1}\frac{\det\mathbb{A}_{1}^{\ast\alpha}}{\det \mathbb{A}_{0}^{\ast}}\frac{\rho_{n}^{-1}(\varepsilon)}{\mathcal{L}_{\alpha}(\frac{\pi}{2\kappa})^{\frac{n-1}{2}}}\frac{1+O(r_{\varepsilon})}{1+\widetilde{\mathcal{G}}_{n}^{\ast\alpha}\rho^{-1}_{n}(\varepsilon)}(\bar{p}_{1}^{\alpha}+O(\tilde{r}_{\alpha}(\delta)))\notag\\
&+\sum^{\frac{n(n+1)}{2}}_{\alpha=n+1}\frac{\det\mathbb{A}_{2}^{\ast\alpha}}{\det \mathbb{A}^{\ast}_{0}}\big(1+O(r_{\varepsilon})\big)(\bar{p}_{1}^{\alpha}+O(\tilde{r}_{\alpha}(\delta)))+O(1)\delta^{-\frac{n}{2}}e^{-\frac{1}{2C\delta^{1/2}}}\notag\\
=&\sum^{n}_{\alpha=1}\frac{\det\mathbb{A}_{1}^{\ast\alpha}}{\det \mathbb{A}_{0}^{\ast}}\frac{\rho_{n}^{-1}(\varepsilon)}{\mathcal{L}_{\alpha}(\frac{\pi}{2\kappa})^{\frac{n-1}{2}}}\frac{1+O(r_{\varepsilon})}{1+\widetilde{\mathcal{G}}_{n}^{\ast\alpha}\rho^{-1}_{n}(\varepsilon)}\bar{p}_{1}^{\alpha}\notag\\
&+\sum^{\frac{n(n+1)}{2}}_{\alpha=n+1}\frac{\det\mathbb{A}_{2}^{\ast\alpha}}{\det \mathbb{A}^{\ast}_{0}}\big(1+O(r_{\varepsilon})\big)\bar{p}_{1}^{\alpha}+O(1)\rho^{-1}_{n}(\varepsilon)\delta^{-1/2};
\end{align}

$(ii)$ in the case of $n>3$,
\begin{align}\label{EQUA003}
\nabla\mathbf{u}=&\sum^{\frac{n(n+1)}{2}}_{\alpha=1}\frac{\det\mathbb{A}_{3}^{\ast\alpha}}{\det \mathbb{A}^{\ast}}(1+O(\varepsilon^{\min\{\frac{1}{12},\frac{n-3}{24}\}}))(\nabla\bar{\mathbf{u}}_{1}^{\alpha}+O(\tilde{r}_{\alpha}(\delta)))+O(1)\delta^{-\frac{n}{2}}e^{-\frac{1}{2C\delta^{1/2}}}\notag\\
=&\sum^{\frac{n(n+1)}{2}}_{\alpha=1}\frac{\det\mathbb{A}_{3}^{\ast\alpha}}{\det \mathbb{A}^{\ast}}(1+O(\varepsilon^{\min\{\frac{1}{12},\frac{n-3}{24}\}}))\nabla\bar{\mathbf{u}}_{1}^{\alpha}+O(1)\delta^{-1/2},
\end{align}
and
\begin{align}\label{EQUA005}
p-(q)_{\delta;x'}=&\sum^{\frac{n(n+1)}{2}}_{\alpha=1}\frac{\det\mathbb{A}_{3}^{\ast\alpha}}{\det \mathbb{A}^{\ast}}(1+O(\varepsilon^{\min\{\frac{1}{12},\frac{n-3}{24}\}}))(\bar{p}_{1}^{\alpha}+O(\tilde{r}_{\alpha}(\delta)))\notag\\
=&\sum^{\frac{n(n+1)}{2}}_{\alpha=1}\frac{\det\mathbb{A}_{3}^{\ast\alpha}}{\det \mathbb{A}^{\ast}}(1+O(\varepsilon^{\min\{\frac{1}{12},\frac{n-3}{24}\}}))\bar{p}_{1}^{\alpha}+O(1)\delta^{-1/2}.
\end{align}
Consequently, combining these asymptotics above, we obtain that Theorem \ref{MAINZW002} holds.

\end{proof}

For the convenience of presentation, in the following $a\lesssim b$ (or $a\gtrsim b$) implies $a\leq Cb$ (or $a\geq \frac{1}{C}b$) for some positive $\varepsilon$-independent constant $C$, which depends only on $\mu,n,R_{0},\kappa$, and the upper bounds of the $C^{2,\gamma}$ norm of $\partial D$ and $\partial D_{i}$, $i=1,2$.

\begin{corollary}\label{MGA001}
Assume as in Theorem \ref{MAINZW002}. If $\det\mathbb{A}_{i}^{\ast n}\neq0$, $i=1,3$, then for a arbitrarily small $\varepsilon>0$ and $x\in\{|x'|=0\}\cap\Omega$,
\begin{align*}
|\sigma[\mathbf{u},p-(q)_{\delta;x'}]|\lesssim&
\begin{cases}
\frac{\max\limits_{1\lesssim\alpha\lesssim n}\kappa^{\frac{n-1}{2}}|\det\mathbb{A}_{1}^{\ast\alpha}|}{\mu|\det\mathbb{A}_{0}^{\ast}|}\frac{1}{\varepsilon^{2}\rho_{n}(\varepsilon)},&n=2,3,\vspace{0.5ex}\\
\frac{\max\limits_{1\leq\alpha\leq n}|\det\mathbb{A}_{3}^{\ast\alpha}|}{|\det \mathbb{A}^{\ast}|}\frac{1}{\varepsilon^{2}},&n>3,
\end{cases}
\end{align*}
and
\begin{align*}
|\sigma[\mathbf{u},p-(q)_{\delta;x'}]|\gtrsim&
\begin{cases}
\frac{\kappa^{\frac{n-1}{2}}|\det\mathbb{A}_{1}^{\ast n}|}{\mu|\det\mathbb{A}_{0}^{\ast}|}\frac{1}{\varepsilon^{2}\rho_{n}(\varepsilon)},&n=2,3,\vspace{0.5ex}\\
\frac{|\det\mathbb{A}_{3}^{\ast n}|}{|\det \mathbb{A}^{\ast}|}\frac{1}{\varepsilon^{2}},&n>3.
\end{cases}
\end{align*}

\end{corollary}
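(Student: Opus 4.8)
The plan is to read the claim off the asymptotic expansions in Theorem \ref{MAINZW002}, evaluated on the central segment $\{|x'|=0\}\cap\Omega_{R_0}$, where $\delta(x')=\varepsilon$ by \eqref{DEL010}. As anticipated in Remark \ref{ANALY001}, on this segment the Cauchy stress is governed entirely by the single pressure profile $\bar p_1^n$, whose size there is precisely of order $\varepsilon^{-2}$, so the whole argument reduces to isolating that term and checking that everything else is of strictly smaller order.

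Concretely, I would first record the behaviour of the auxiliary functions at a point $x=(0',x_n)$ with $|x_n|<\varepsilon/2$. From the explicit formula \eqref{PMAIN002}, using $\partial_\alpha\delta=4\kappa x_\alpha$ and $x'\cdot\nabla_{x'}\delta=4\kappa|x'|^2$, every pressure profile with $\alpha\neq n$ vanishes identically on the segment (for $\alpha=1,\dots,n-1$ because of the prefactor $\partial_\alpha\delta$, for $\alpha=n+1,\dots,2n-1$ because of the prefactor $x_{\alpha-n}$, and $\bar p_1^\alpha\equiv0$ for $\alpha\geq 2n$), whereas
\[
\bar p_1^n(0',x_n)=-\frac{\mu a_1}{4\kappa\varepsilon^2}+\frac{3\mu a_2 x_n^2}{\varepsilon^3}=-\frac{\mu a_1}{4\kappa\varepsilon^2}\bigl(1+O(\varepsilon)\bigr);
\]
since $a_1=\tfrac{6}{n-1}\neq0$ by \eqref{constants001}, this is $\asymp\varepsilon^{-2}$ uniformly in $x_n$. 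On the other hand, by Remark \ref{ANALY001} one has $|\nabla\bar{\mathbf{u}}_1^\alpha|\lesssim\varepsilon^{-1}$ on the segment for every $\alpha$, and the remainder in Theorem \ref{MAINZW002} is $O(1)\rho_n^{-1}(\varepsilon)\varepsilon^{-1/2}$ for $n=2,3$ and $O(1)\varepsilon^{-1/2}$ for $n>3$; all of these are of strictly smaller order than $\rho_n^{-1}(\varepsilon)\varepsilon^{-2}$, resp. $\varepsilon^{-2}$.

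For the upper bound I would substitute these facts into \eqref{EQUA001}--\eqref{EQUA002} (for $n=2,3$), resp. \eqref{EQUA003} and \eqref{EQUA005} (for $n>3$), and collect the terms of $\sigma[\mathbf{u},p-(q)_{\delta;x'}]$: every contribution other than the $\alpha=n$ pressure term carries a bounded prefactor times $O(\varepsilon^{-1})$ (or smaller), hence is negligible against the claimed rate once $\varepsilon$ is small, while the surviving term has modulus $\lesssim\tfrac{\kappa^{(n-1)/2}|\det\mathbb{A}_1^{\ast n}|}{\mu|\det\mathbb{A}_0^\ast|}\tfrac1{\varepsilon^2\rho_n(\varepsilon)}$, resp. $\lesssim\tfrac{|\det\mathbb{A}_3^{\ast n}|}{|\det\mathbb{A}^\ast|}\varepsilon^{-2}$ (here $\tfrac1{1+\widetilde{\mathcal{G}}_n^{\ast n}\rho_n^{-1}(\varepsilon)}=1+o(1)$ because $\rho_n^{-1}(\varepsilon)\to0$). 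Since $\max_{1\leq\alpha\leq n}|\det\mathbb{A}_i^{\ast\alpha}|\geq|\det\mathbb{A}_i^{\ast n}|$, this is bounded by the asserted right-hand side.

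For the lower bound the key simplification is that $\operatorname{tr}e(\mathbf{u})=\nabla\cdot\mathbf{u}=0$, so $\operatorname{tr}\sigma[\mathbf{u},p-(q)_{\delta;x'}]=-n\,(p-(q)_{\delta;x'})$ and hence $|\sigma[\mathbf{u},p-(q)_{\delta;x'}]|\gtrsim|p-(q)_{\delta;x'}|$; it then suffices to bound the scalar $|p-(q)_{\delta;x'}|$ from below. By \eqref{EQUA002} (resp. \eqref{EQUA005}) together with the facts above, on the segment $p-(q)_{\delta;x'}$ equals the $\alpha=n$ term plus a remainder of order $\rho_n^{-1}(\varepsilon)\varepsilon^{-1/2}$ (resp. $\varepsilon^{-1/2}$); since $\det\mathbb{A}_i^{\ast n}\neq0$ by hypothesis ($i=1$ when $n=2,3$, $i=3$ when $n>3$), $\det\mathbb{A}_0^\ast\neq0$ (resp. $\det\mathbb{A}^\ast\neq0$, these matrices being positive definite), and $a_1\neq0$, the leading term has the claimed size and dominates the remainder for $\varepsilon$ small, and the triangle inequality closes the estimate. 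The main obstacle is precisely the bookkeeping in the middle step: one must verify that each $\bar p_1^\alpha$ with $\alpha\neq n$ vanishes identically on the central segment — so that no cancellation in the leading singularity of $p$ can occur — and that $\bar p_1^n$ there retains the explicit, $x_n$-uniformly nonvanishing leading coefficient $-\mu a_1/(4\kappa\varepsilon^2)$. This, combined with the non-degeneracy hypotheses $\det\mathbb{A}_i^{\ast n}\neq0$, is what upgrades Theorem \ref{MAINZW002} into a two-sided estimate and hence into the statement that the optimal blow-up rate $\varepsilon^{-2}\rho_n^{-1}(\varepsilon)$ (resp. $\varepsilon^{-2}$) of the Cauchy stress is attained on the axis and is caused by the pressure.
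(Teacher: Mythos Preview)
Your proposal is correct and follows essentially the same approach as the paper's proof: evaluate the asymptotic expansions \eqref{EQUA001}--\eqref{EQUA005} on the axis $\{x'=0\}$, where $\delta=\varepsilon$, isolate $\bar p_1^n$ as the sole pressure profile of order $\varepsilon^{-2}$ there (the others vanish on the axis), and check that all gradient and remainder terms are of strictly lower order. The only minor variation is in extracting the lower bound on $|\sigma|$: you use the trace identity $\operatorname{tr}\sigma=-n\,(p-(q)_{\delta;x'})$ (valid because $\nabla\cdot\mathbf{u}=0$), whereas the paper uses the triangle-inequality form $|\sigma|\geq\sqrt{n}\,|p-(q)_{\delta;x'}|-2\mu|\nabla\mathbf{u}|$ together with the separate bound $|\nabla\mathbf{u}|\lesssim\rho_n^{-1}(\varepsilon)\varepsilon^{-1}$; both routes close the estimate.
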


\begin{remark}
The optimal gradient estimates in Corollary \ref{MGA001} are shown more precisely than that of \cite{LX2022} due to the captured blow-up matrices in any dimension.
\end{remark}

\begin{proof}
According to Remark \ref{ANALY001} and \eqref{EQUA001}--\eqref{EQUA005}, it follows that for $x\in\{x'=0'\}\cap\Omega$,
\begin{align*}
|\nabla \mathbf{u}|\lesssim&\sum^{n}_{\alpha=1}\frac{|\det\mathbb{A}_{1}^{\ast\alpha}|}{|\det \mathbb{A}_{0}^{\ast}|}\frac{\kappa^{\frac{n-1}{2}}}{\mu\rho_{n}(\varepsilon)}|\nabla\bar{\mathbf{u}}_{1}^{\alpha}|+
\rho^{-1}_{n}(\varepsilon)\varepsilon^{-1/2}\notag\\
\lesssim&
\begin{cases}
\frac{\max\limits_{1\leq\alpha\leq n}\kappa^{\frac{n-1}{2}}|\det\mathbb{A}_{1}^{\ast\alpha}|}{\mu|\det\mathbb{A}_{0}^{\ast}|}\frac{1}{\varepsilon\rho_{n}(\varepsilon)},&n=2,3,\vspace{0.5ex}\\
\frac{\max\limits_{1\leq\alpha\leq n}|\det\mathbb{A}_{3}^{\ast\alpha}|}{|\det \mathbb{A}^{\ast}|}\frac{1}{\varepsilon},&n>3,
\end{cases}\notag\\
|p-(q)_{\delta;x'}|\lesssim&\sum^{n}_{\alpha=1}\frac{|\det\mathbb{A}_{1}^{\ast\alpha}|}{|\det \mathbb{A}_{0}^{\ast}|}\frac{\kappa^{\frac{n-1}{2}}}{\mu\rho_{n}(\varepsilon)}|\bar{p}_{1}^{\alpha}|+
\rho^{-1}_{n}(\varepsilon)\varepsilon^{-1/2}\notag\\
\lesssim&
\begin{cases}
\frac{\max\limits_{1\leq\alpha\leq n}\kappa^{\frac{n-1}{2}}|\det\mathbb{A}_{1}^{\ast\alpha}|}{\mu|\det\mathbb{A}_{0}^{\ast}|}\frac{1}{\varepsilon^{2}\rho_{n}(\varepsilon)},&n=2,3,\vspace{0.5ex}\\
\frac{\max\limits_{1\leq\alpha\leq n}|\det\mathbb{A}_{3}^{\ast\alpha}|}{|\det \mathbb{A}^{\ast}|}\frac{1}{\varepsilon^{2}},&n>3,
\end{cases}
\end{align*}
and
\begin{align*}
|\nabla \mathbf{u}|\geq&\left|\sum^{n}_{\alpha=1}\frac{\det\mathbb{A}_{1}^{\ast\alpha}}{\det \mathbb{A}_{0}^{\ast}}\frac{\rho_{n}^{-1}(\varepsilon)}{\mathcal{L}_{\alpha}(\frac{\pi}{2\kappa})^{\frac{n-1}{2}}}\frac{1+O(r_{\varepsilon})}{1+\widetilde{\mathcal{G}}_{n}^{\ast\alpha}\rho^{-1}_{n}(\varepsilon)}\partial_{x_{n}}(\bar{\mathbf{u}}_{1}^{\alpha})^{(n)}\right|-C\rho^{-1}_{n}(\varepsilon)\varepsilon^{-1/2}\notag\\
\gtrsim&
\begin{cases}
\frac{\kappa^{\frac{n-1}{2}}|\det\mathbb{A}_{1}^{\ast n}|}{\mu|\det\mathbb{A}_{0}^{\ast}|}\frac{1}{\varepsilon\rho_{n}(\varepsilon)},&n=2,3,\vspace{0.5ex}\\
\frac{|\det\mathbb{A}_{3}^{\ast n}|}{|\det \mathbb{F}^{\ast}|}\frac{1}{\varepsilon},&n>3,
\end{cases}\notag\\
|p-(q)_{\delta;x'}|\geq&\left|\frac{\det\mathbb{A}_{1}^{\ast n}}{\det \mathbb{A}_{0}^{\ast}}\frac{\rho_{n}^{-1}(\varepsilon)}{\mathcal{L}_{\alpha}(\frac{\pi}{2\kappa})^{\frac{n-1}{2}}}\frac{1+O(r_{\varepsilon})}{1+\widetilde{\mathcal{G}}_{n}^{\ast n}\rho^{-1}_{n}(\varepsilon)}\bar{p}_{1}^{n}\right|\notag\\
&-\left|\sum^{n-1}_{\alpha=1}\frac{\det\mathbb{A}_{1}^{\ast\alpha}}{\det \mathbb{A}_{0}^{\ast}}\frac{\rho_{n}^{-1}(\varepsilon)}{\mathcal{L}_{\alpha}(\frac{\pi}{2\kappa})^{\frac{n-1}{2}}}\frac{1+O(r_{\varepsilon})}{1+\widetilde{\mathcal{G}}_{n}^{\ast\alpha}\rho^{-1}_{n}(\varepsilon)}\bar{p}_{1}^{\alpha}\right|-C\rho^{-1}_{n}(\varepsilon)\varepsilon^{-1/2}\notag\\
\gtrsim&
\begin{cases}
\frac{\kappa^{\frac{n-1}{2}}|\det\mathbb{A}_{1}^{\ast n}|}{\mu|\det\mathbb{A}_{0}^{\ast}|}\frac{1}{\varepsilon^{2}\rho_{n}(\varepsilon)},&n=2,3,\vspace{0.5ex}\\
\frac{|\det\mathbb{A}_{3}^{\ast n}|}{|\det \mathbb{F}^{\ast}|}\frac{1}{\varepsilon^{2}},&n>3.
\end{cases}
\end{align*}
Since
\begin{align*}
\sqrt{n}|p-(q)_{\delta;x'}|-2\mu|\nabla\mathbf{u}|\leq|\sigma[\mathbf{u},p-(q)_{\delta;x'}]|\leq\sqrt{n}|p-(q)_{\delta;x'}|-2\mu|\nabla\mathbf{u}|,
\end{align*}
then we have
\begin{align*}
\begin{cases}
\frac{\kappa^{\frac{n-1}{2}}|\det\mathbb{A}_{1}^{\ast n}|}{\mu|\det\mathbb{A}_{0}^{\ast}|}\frac{1}{\varepsilon^{2}\rho_{n}(\varepsilon)}\lesssim|\sigma[\mathbf{u},p-(q)_{\delta;x'}]|\lesssim\frac{\max\limits_{1\lesssim\alpha\lesssim n}\kappa^{\frac{n-1}{2}}|\det\mathbb{A}_{1}^{\ast\alpha}|}{\mu|\det\mathbb{A}_{0}^{\ast}|}\frac{1}{\varepsilon^{2}\rho_{n}(\varepsilon)},&n=2,3,\vspace{0.5ex}\\
\frac{|\det\mathbb{A}_{3}^{\ast n}|}{|\det \mathbb{A}^{\ast}|}\frac{1}{\varepsilon^{2}}\lesssim|\sigma[\mathbf{u},p-(q)_{\delta;x'}]|\lesssim\frac{\max\limits_{1\leq\alpha\leq n}|\det\mathbb{A}_{3}^{\ast\alpha}|}{|\det \mathbb{A}^{\ast}|}\frac{1}{\varepsilon^{2}},&n>3.
\end{cases}
\end{align*}

\end{proof}


\noindent{\bf{\large Acknowledgements.}} Z. Zhao was partially supported by CPSF (2021M700358). Z. Zhao would like to thank his friend L.J. Xu for her useful discussions on constructions of leading terms.

\end{document}